\newcommand{\R}{\mathbb R}
\pgfplotsset{width=10cm,compat=newest}
\definecolor{color1}{RGB}{0,139,0} 
\definecolor{color2}{RGB}{154,255,154} 
\numberwithin{equation}{section}
\newcommand{\N}{\mathbb{N}} 
\newcommand{\ra}{\rightarrow}
\newcommand{\push}{\emph{push}\xspace}
\newcommand{\pull}{\emph{pull}\xspace}
\newcommand{\pushpull}{\emph{push\&pull}\xspace}
\def\@endtheorem{\endtrivlist}
\newtheoremstyle{abcd}
  {}
  {}
  {\itshape}
  {}
  {\bfseries}
  {.}
  {.5em}
  {}
\theoremstyle{abcd}
\newtheorem{theorem}{Theorem}
\numberwithin{theorem}{section}
\newtheorem{definition}[theorem]{Definition}
\newtheorem{example}[theorem]{Example}
\newtheorem{proposition}[theorem]{Proposition}
\newtheorem{lemma}[theorem]{Lemma}
\newtheorem{remark}[theorem]{Remark}
\begin{document}

\title{Robustness of Randomized Rumour Spreading}

\author{Rami Daknama$^1$ \and Konstantinos Panagiotou$^1$ \and Simon Reisser$^1$}

\date{$^1$Ludwig-Maximilians-Universität München\\[2ex] 12\textsuperscript{th} April, 2019}
\maketitle 

\begin{abstract}
In this work we consider three well-studied broadcast protocols: \push, \pull and \pushpull. A key property of all these models, which is also an important reason for their popularity, is that they are presumed to be very robust, since they are simple, randomized, and, crucially, do not utilize explicitly the global structure of the underlying graph. While sporadic results exist, there has been no systematic theoretical treatment quantifying the robustness of these models. Here we investigate this question with respect to two orthogonal aspects: (adversarial) modifications of the underlying graph and message transmission failures.  

We explore in particular the following notion of \emph{local resilience}: beginning with a graph, we investigate up to which fraction of the edges an adversary has to be allowed to delete at each vertex, so that the protocols need significantly more rounds to broadcast the information.
Our main findings establish a separation among the three models. It turns out that \pull is robust with respect to all parameters that we consider. On the other hand, \push may slow down significantly, even if the adversary is allowed to modify the degrees of the vertices by an arbitrarily small positive fraction only. Finally, \pushpull is robust when no message transmission failures are considered, otherwise it may be slowed down. 

On the technical side, we develop two novel methods for the analysis of randomized rumour spreading protocols. First, we exploit the notion of self-bounding functions to facilitate significantly the round-based analysis: we show that for any graph the variance of the growth of informed vertices is bounded by its expectation, so that concentration results follow immediately. Second, in order to control adversarial modifications of the graph we make use of a powerful tool from extremal graph theory, namely   Szemer\`edi's Regularity Lemma.

\end{abstract}


\thispagestyle{empty} 


\section{Introduction}

Randomized broadcast protocols are highly relevant for data distribution in large networks of various kinds, including technological, social and biological networks. Among many others there are three basic models in the literature, introduced in \cite{frieze1985shortest,Demers1988,Karp2000}, namely \push, \pull and \emph{push\&pull} (or short \emph{pp}). Consider a connected graph in which  some vertex holds a piece of information; we call this vertex (initially) informed. All three models have the common characteristic that they proceed in rounds. In the  \push model, in every round every informed vertex chooses a neighbour independently and uniformly at random (iuar) and informs it; this of course has only an effect if the target vertex was previously uninformed. Contrary, in the \pull model every round every \emph{un}informed vertex chooses a neighbour iuar and asks for the information. If the asked vertex has the information, then the asking vertex becomes informed as well. The third model \pushpull combines both worlds: in each round, each vertex chooses a neighbour iuar, and if one of both vertices is informed, then afterwards both  become so. We additionally assume that each message transmission succeeds independently with probability $q\in (0,1]$. For these algorithms, the main parameter that we consider is the random variable that counts how many rounds are needed until all vertices are informed, and we call these quantities the \textit{runtimes} of the respective algorithms.

In the remainder we will denote the runtime of \push by $T_{push}(G,v,q)$ where $G$ is the underlying graph, initially the vertex $v$ is informed and we have a transmission success probability of $q\in (0,1]$. Analogously we denote the runtimes of \pull and \pushpull by $T_{pull}(G,v,q)$ and $T_{pp}(G,v,q)$ respectively. If the choice of $v$ does not matter we will omit it in our notation. The most basic case is when $G$ is the complete graph $K_n$ with $n$ vertices. Then, see for example Doerr and Kostrygin \cite{doerr2017randomized}, it is known that for ${\cal P} \in \{push, pull, pp\}$ and $q\in (0,1] $ in expectation and with probability tending to $1$ as $n\to\infty$
$$T_{\cal P}(K_n,q)=c_{\cal P}(q)\log n+o(\log n),$$
where, for $q\in (0,1)$,
\begin{align*}
c_{push}(q):=\frac{1}{\log (1+q)}+\frac{1}{q}, \hspace{0.2cm}
c_{pull}(q):=\frac{1}{\log (1+q)}-\frac{1}{\log (1-q)}, \hspace{0.2cm}
c_{pp}(q):=\frac{1}{\log (1+2q)}+\frac{1}{q-\log (1-q)},
\end{align*}
and where we set $c_{\cal P}(1):=\lim_{q \ra 1}c_{\cal P}(q)$. If $q$ is clear from the context, we write $c_{\cal P}$ instead of $c_{\cal P}(q)$. Actually, the results in~\cite{doerr2017randomized} and also  \cite{doerr2014tight} are much more precise,  but the stated forms will be sufficient for what follows. 

\paragraph{Contribution \& Related Work}
In this article our focus is on quantifying the \emph{robustness} of all three models. Indeed, robustness is a key property that is often attributed to them, since they are simple, randomized, and, crucially, do not exploit explicitly the structure of the underlying graph (apart, of course, from considering the neighborhoods of the vertices). 
Clearly, the runtime can vary tremendously between different graphs with the same number of vertices. Hence it is essential to understand which structural characteristics of a graph influence in what way the runtime of rumour spreading algorithms.

One result in this spirit for the \push model was shown in~\cite{Panagiotou2015}. Roughly speaking, in that paper it is shown that even on graphs with low density, if the edges are distributed rather uniformly, then \push is as fast as on the complete graph. This can be interpreted as a robustness result: starting with a complete graph, one can delete a vast amount of edges and as long as this is done rather uniformly, the runtime of \push is affected insignificantly. To state the result more precisely, we need the following notion.
\begin{definition}[$(n,\delta,\Delta,\lambda)$-graph]
\label{def}
Let G be a connected graph with $n$ vertices that has minimum degree $\delta$ and maximum degree $\Delta$. Let $\mu_1\geq\mu_2\geq\cdots\geq\mu_n$ be the eigenvalues of the adjacency matrix of G, and set $\lambda=\max_{2\leq i\leq n}|\mu_i|=\max\{|\mu_2|,|\mu_n|\}$. We will call $G$ an $(n,\delta,\Delta,\lambda)$-graph.  
\end{definition}
In this paper we are interested in the case where $G$ gets large, that is, when $n\ra \infty$. Hence all asymptotic notation in this paper is with respect to $n$; in particular ``with high probability'', or short whp,  means with probability $1-o(1)$ when $n\ra \infty$.
\begin{definition}[Expander Sequence]
\label{expander_sequence_definition}
Let $\mathcal G = (G_n)_{n \in \N}$ be a sequence of graphs, where $G_n$ is a $(n,\delta_n,\Delta_n,\lambda_n)$-graph for each $n \in\N$. We say that $\mathcal G$ is an \emph{expander sequence} if $\Delta_n/\delta_n=1+o(1)$ and $\lambda_n=o(\Delta_n)$.
\end{definition}
Note that if we consider any sequence $\mathcal G = (G_n)_{n \in \N}$ of graphs this always implicitly defines $\delta_n,\Delta_n$ and $\lambda_n$ as in Definition~\ref{expander_sequence_definition}. 
Expander graphs have found numerous applications in computer science and mathematics, see for example the survey~\cite{hoory2006expander}. If $\mathcal G$ is an expander sequence, then intuitively this means that for $n$ large enough, the edges of~$G_n$ are rather uniformly distributed. For a more formal statement compare Lemma~\ref{expmix}.
Moreover, note that our definition of expander sequences excludes the case when $d$ is bounded; this is actually a necessary condition for our robustness results to hold, compare \cite{elsasser2009runtime}.
With all these definitions at hand we can state the result from \cite{Panagiotou2015} that quantifies the robustness of \push with respect to the network topology, that is, the runtime is asymptotically the same as on the complete graph~$K_n$. 
\begin{theorem}
\label{push}
Let $\mathcal G=(G_n)_{n \in \N}$ be an expander sequence. Then whp 
\begin{align*}
T_{push}(G_n) = c_{push}(1) \log n + o(\log n).
\end{align*}
\end{theorem}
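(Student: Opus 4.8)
The plan is to show that on an expander sequence the \push dynamics track, round for round, the dynamics on the complete graph $K_n$, for which $T_{push}(K_n,1)=c_{push}(1)\log n + o(\log n)$ is known (\cite{doerr2017randomized}). On $K_n$ the constant splits into a \emph{growth} part $\tfrac{1}{\log 2}\log n$, during which the informed set roughly doubles each round, and a \emph{cleanup} part $\log n$ (in natural-log units), during which the \emph{un}informed set shrinks by a factor $e^{-1}$ per round. The whole argument rests on transferring these two recursions to a near-regular, quasirandom graph, where the role of ``a uniformly random vertex'' is played by ``a uniformly random neighbour''. Write $I_t, U_t$ for the informed and uninformed sets after $t$ rounds, put $d:=\Delta_n$ and $x_t := |I_t|/n$; by the expander hypothesis $\delta_n=(1-o(1))d$, so every degree is $(1+o(1))d$.

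First I would isolate the one-step recursion. For an uninformed vertex $u$ the probability that it remains uninformed after one round equals $\prod_{w\in N(u)\cap I_t}(1-1/\deg(w))$, which by near-regularity is $(1+o(1))\exp(-|N(u)\cap I_t|/d)$. The heart of the transfer is to argue that $|N(u)\cap I_t| = (1+o(1))\,d\,x_t$ for all but a negligible fraction of vertices $u$; this is precisely the kind of statement that the expander mixing lemma (Lemma~\ref{expmix}) delivers, since it bounds $e(S,I_t)$ against $\tfrac{d}{n}|S||I_t|$ with error $O(\lambda_n\sqrt{|S||I_t|})=o(d|S||I_t|/n)$ for any set $S$ that is not too small. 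Applying this to $S=U_t$ and, to control the exceptional vertices, to the sets of uninformed vertices with atypically many or few informed neighbours, one obtains that the expected uninformed fraction after a round is $(1+o(1))(1-x_t)e^{-x_t}$, matching $K_n$. In the growth regime ($x_t$ initially tiny and bounded away from $1$) this yields near-doubling, so after $\log_2 n + o(\log n)$ rounds a constant fraction, and then a $1-o(1)$ fraction, of the vertices is informed whp.

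Next I would handle the cleanup phase, where $y_t := 1-x_t$ is small. Here each uninformed vertex has $(1+o(1))d$ informed neighbours, so it stays uninformed with probability $(1+o(1))e^{-1}$, and hence $\mathbb E[|U_{t+1}|\mid \mathcal F_t] = (1+o(1))|U_t|/e$. To make this rigorous while $|U_t|$ is still large I would use a concentration step — and this is exactly where the self-bounding bound announced in the abstract (variance of the one-round growth bounded by its expectation) is convenient — to keep $|U_t|$ within a $(1+o(1))$ factor of $|U_t|/e$ each round. Iterating, $|U_t|$ drops from $\Theta(n)$ to $O(\log n)$ in $\log n + o(\log n)$ rounds; the last $O(\log n)$ uninformed vertices are each informed within $o(\log n)$ further rounds by a direct union bound, since each of them is hit by a push with probability bounded away from $0$. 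Summing the two phases gives the upper bound $T_{push}(G_n)\le (\tfrac{1}{\log 2}+1)\log n + o(\log n)=c_{push}(1)\log n+o(\log n)$. For the matching lower bound I would note that $|I_{t}|\le 2|I_{t-1}|$ always (each informed vertex emits one push), forcing $\ge \log_2 n$ rounds, and that a fixed vertex remains uninformed each round with probability $\ge (1-o(1))e^{-1}$ even late in the process, so the expected number of still-uninformed vertices remains $\ge 1$ for $(1-o(1))\log n$ cleanup rounds; a first/second moment argument then supplies the $\log n$ lower bound for the cleanup.

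The main obstacle is the gap between the \emph{aggregate} control that the expander mixing lemma provides — it bounds edge counts between sets — and the \emph{per-vertex} control that the push recursion actually needs, namely that simultaneously almost all uninformed vertices see close to the expected number of informed neighbours. Pinning down the exceptional set, and showing it is asymptotically negligible throughout the whole trajectory, is the delicate point, and it is compounded by the fact that the $(1+o(1))$ multiplicative errors are incurred afresh in each of the $\Theta(\log n)$ rounds: one must verify that they aggregate to only an additive $o(\log n)$, which forces the per-round error to be genuinely $o(1)$ and to be tracked uniformly rather than just in expectation. The endpoints also need care: the precise transition between the growth and cleanup regimes, and the treatment of the final handful of uninformed vertices, are what determine whether the constants $\tfrac{1}{\log 2}$ and $1$ emerge exactly rather than merely up to a constant factor.
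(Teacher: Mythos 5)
A preliminary remark on the comparison itself: the paper contains no proof of Theorem~\ref{push} --- it is stated as a result imported from \cite{Panagiotou2015}, and even the generalization Theorem~\ref{pushFast} is explicitly left as a ``straightforward adaption'' of that external proof. So your sketch must be measured against the argument of \cite{Panagiotou2015}, pieces of which this paper re-imports (notably Lemma~\ref{increaseByFactor2lemma}, the whp-doubling statement). Your global strategy --- a doubling phase of $\log_2 n + o(\log n)$ rounds, a cleanup phase of $\log n + o(\log n)$ rounds with per-round shrink factor $e^{-1}$, and matching lower bounds --- is indeed that proof's strategy. The genuine gap is in how you justify the doubling phase. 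Your ``heart of the transfer'', namely that $|N(u)\cap I_t| = (1+o(1))\,d\,x_t$ for all but a negligible fraction of vertices, is unavailable exactly where the growth phase lives: when $|I_t|$ is polylogarithmic, or more generally whenever $d|I_t|/n = O(1)$, the typical vertex has zero or one informed neighbours and the statement is vacuous. Quantitatively, applying the mixing lemma to the set $S$ of vertices with $|N(u)\cap I_t|\ge (1+\epsilon)dx_t$ only yields $|S|\le \lambda_n^2|I_t|/(\epsilon d x_t)^2 = \lambda_n^2 n^2/(\epsilon^2 d^2 |I_t|)$, which is not even $o(n)$ until $|I_t|\gg \lambda_n^2 n/d^2$; and ``all but $o(n)$ vertices'' is in any case too weak, since the quantity you must pin down to relative error $o(1)$ has size $|I_t|\ll n$, so an exceptional set of size $o(n)$ can dominate it. Relatedly, the recursion you extract, expected uninformed fraction $=(1+o(1))(1-x_t)e^{-x_t}$, is true but useless when $x_t=o(1)$: the multiplicative $(1+o(1))$ error on $|U_{t+1}|$ is of order $o(n)$ and swamps the decrement $\approx x_t n$ you are trying to track, so near-doubling does not follow from it. What the doubling phase actually requires --- and what the proof of Lemma 2.5 in \cite{Panagiotou2015} establishes --- is \emph{aggregate} control of the wasted pushes: pushes landing inside $I_t$, bounded via $e(I_t)=o(d)|I_t|$ from the mixing lemma, and colliding pushes, whose expected number is at most $\delta_n^{-2}\sum_{u}|N(u)\cap I_t|^2$, bounded by a spectral estimate on the number of length-two walks between vertices of $I_t$ (roughly $d^2|I_t|^2/n+\lambda_n^2|I_t|+\Delta_n|I_t|$, which is $o(\delta_n^2)|I_t|$ for $|I_t|=o(n)$). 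You correctly flag this per-vertex-versus-aggregate mismatch as ``the main obstacle'', but the tool you propose cannot close it; the collision-counting argument is the missing idea.

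There is also a secondary, repairable, flaw in the cleanup phase. The self-bounding bound (Lemma~\ref{lugosiappl}) gives $\textrm{Var}_t[|U_{t+1}|]=\textrm{Var}_t[|I_{t+1}|]\le \mathbb{E}_t[|I_{t+1}|]=\Theta(n)$ in this regime, so Chebyshev keeps $|U_{t+1}|$ within a $(1+o(1))$ factor of its mean only while $|U_t|=\omega(\sqrt n)$; your claim to iterate per-round concentration down to $O(\log n)$ uninformed vertices is therefore not justified by the stated tools. The standard repair needs no concentration at all: iterate $\mathbb{E}_t[|U_{t+\tau}|]\le (e^{-1}+o(1))^{\tau}|U_t|$ by the tower property and apply Markov once at $\tau = \log n + o(\log n)$, exactly as this paper does in the analogous steps for \pull and \pushpull (Lemma~\ref{pullResUpper}, Lemma~\ref{PushPullUpper}). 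Your lower bound is sound --- it is essentially Section 2.3 of \cite{Panagiotou2015}, cf.\ also Lemma~\ref{PushPullLower} --- provided the events ``$u$ is not pushed in round $s$'' are chained via conditional probabilities (each is at least $e^{-1-o(1)}$ given any history) rather than treated as independent.
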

%
Apart from expander sequences, results in the form of Theorem \ref{push} (where the asymptotic runtime is determined) were also shown for sufficiently dense Erdös-Renyi random graphs  \cite{fountoulakis2010rumor}, random regular graphs \cite{fountoulakis2010reliable} as well as hypercubes \cite{Panagiotou2015}. Moreover, the order of the runtime on various models that describe social networks was investigated. In \cite{fountoulakis2012ultra} the Chung-Lu model was studied, \cite{doerr2011social} explored preferential attachment graphs and \cite{Friedrich2013} examined geometric graphs. A somewhat different approach is to derive general runtime bounds that hold for all graphs and depend only on some graph parameter, e.g.~conductance \cite{giakkoupis_tight_2011, chierichetti2018rumor}, vertex expansion \cite{giakkoupis_tight_2014} or diameter \cite{censor2012global, haeupler2015simple}. Furthermore, several variants of \push,\pull and \pushpull were studied. These include vertices being restricted to answer only one \pull request per round \cite{daum_rumor_2016}, vertices being allowed to contact multiple neighbours per round \cite{Panagiotou2015,doerr2017randomized}, vertices not calling the same neighbour twice  \cite{doerr2011social} and asynchronous versions \cite{boyd2006randomized,panagiotou2017asynchronous, acan2017push,angel2017string}.
Finally, besides \cite{doerr2017randomized}, robustness of these rumor spreading algorithms with respect to message transmission failures  was also studied by Elsässer and Sauerwald in \cite{elsasser2009runtime}. It was shown for any graph that if a message fails with probability $1-p$, then the runtime of \push increases at most by a factor of $6/p$.  

In this work our focus is on three subjects concerning the robustness of rumour spreading.  
Our first (and not unexpected) result  extends the validity of Theorem  \ref{push} to the runtimes of \pull and \pushpull.
In particular, we show that none of the three protocols slows down or speeds up on graphs with good expansion properties compared to its runtime on the complete graph. This motivates to investigate how severely a graph with good expansion properties has to be modified to increase the respective runtimes. 

In our second contribution, which is also the main result   and which differs from what was treated in previous works, we propose and study an unworn  approach to quantifying robustness. In particular, we investigate the impact of adversarial edge deletions, where we use the well-known concept of \emph{local resilience}, see e.g.~\cite{Sudakov2008, Dellamonica2008}. To be specific, we explore up to which fraction of edges an adversary needs to be allowed to delete at each vertex to slow down the process by a significant amount of time, i.e., by $\Omega(\log n)$ rounds. Here we discover a surprising dichotomy in the following sense. On the one hand, we show that both \pull and \pushpull  cannot be slowed down by such adversarial edge deletions -- in essentially all but trivial cases, where the fraction is so large that the graph may become (almost) disconnected. On the other hand, we demonstrate that even  a small number of edge deletions is sufficient to slow down \push by $\Omega(\log n)$ rounds. In other words, we find that in contrast to \pull and \pushpull, the \push protocol is not resilient to adversarial deletions and lacks (in this specific sense) the robustness of the other two protocols.

As our third subject, we generalise the previous results by additionally considering message transmission failures that occur independently with probability $1-q\in [0,1)$. On the positive side, we show that for arbitrary $q \in (0,1]$ all three algorithms inform \textit{almost} all vertices at least as fast as in an expander sequence in spite of adversarial edge deletions. However, if we want to inform all vertices, only \pull is not slowed down by adversarial edge deletions for all values of $q$;  \push can be slowed down as before; and \pushpull is a mixed bag, for $q=1$ it can not be slowed down, for $q<1$ it can. Furthermore, in general it is also possible to speed \pushpull up by deleting edges, which is however not surprising as the star-graph deterministically finishes in at most 2 rounds.  

Summarizing, this work enhances previous (robustness) results, particularly the ones concerning precise asymptotic runtimes and random transmission failures. Crucially, we introduce and study the concept of local resilience as a method to investigate robustness. However, apart from that, in this paper we develop two new general methods for the analysis of rumour spreading algorithms.
\begin{itemize}
\item \vspace{-2pt} The most common approach in the current literature for the study of the runtime is to determine the expected number of newly informed vertices in one or more rounds and to show concentration, for example by bounding the variance. Achieving this, however, is often quite complex and makes laborious and lengthy technical arguments necessary. Here we use the theory of \emph{self-bounding} functions, see Section~\ref{basiclemmas_sec}, that allows us to cleanly upper bound the variance by the \emph{expected value}. The argument works for all three investigated algorithms and the bound is valid for all graphs. We are certain that this method will also facilitate future work on the analysis of rumour spreading algorithms.
\item \vspace{-4pt} Studying the robustness of the protocols is a challenging task, as the adversary (as described previously) has various opportunities to modify the graph, for example by introducing a high variance in the degrees of the vertices; this turns out to be particularly problematic in the case of \pushpull. Here we demonstrate that such types of irregularities can be handled universally  by applying a powerful tool from a completely different area, namely extremal graph theory. In particular, we use  Szemer\'edi's regularity lemma (see e.g.~\cite{rodl2010regularity}), which allows us to partition the vertex set of a graph such that nearly all pairs of sets in the partition behave nearly like perfect regular bipartite graphs. This allows us to apply our methods on these regular pairs; eventually we obtain a linear recursion that can be solved by analysing the maximal eigenvalue of the underlying matrix.
\end{itemize}

\subsection{Results}
\label{results_subsec}

Our first result addresses the question about how fast rumours spread on expander graphs; in order to obtain a concise statement also the occurrence of independent message transmission failures is considered.

\begin{theorem} 
\label{fast_thm}
Let $\mathcal G=(G_n)_{n \in \N}$ be an expander sequence and let $q \in (0,1]$. Then whp
\begin{enumerate}[label={(\alph*)},ref={\thetheorem~(\alph*)}]
\itemsep0em 
\item $T_{push}(G_n,q) =c_{push}(q) \log n + o(log(n)),$\label{pushFast}
\item $T_{pull}(G_n,q) =c_{pull}(q) \log n + o(log(n)),$\label{pullFast}
\item $T_{pp}(G_n,q) =c_{pp}(q) \log n + o(log(n)).$\label{pushPullFast}
\end{enumerate}
\end{theorem}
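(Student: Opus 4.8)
The plan is to prove that on an expander sequence each of the three processes evolves, round by round, essentially as it does on $K_n$, for which the constants $c_{push},c_{pull},c_{pp}$ are already known from~\cite{doerr2017randomized}; the task then reduces to two things, (i) showing that the \emph{expected} number of newly informed vertices in a round agrees with its $K_n$ value up to a $(1+o(1))$ factor, and (ii) showing that this number is sharply concentrated. I would track the informed set $S$ only through its size. For (i) the key input is the Expander Mixing Lemma (Lemma~\ref{expmix}) together with the near regularity $\Delta_n/\delta_n=1+o(1)$, which forces $d_v=(1+o(1))\bar d$ for every vertex, where $\bar d$ denotes the average degree. These give that all but $o(n)$ vertices $v$ have $(1+o(1))d_v|S|/n$ neighbours in $S$, so that a typical uninformed vertex survives one \push round with probability $(1-q/d_u)^{(1+o(1))d_v|S|/n}=e^{-(1+o(1))q|S|/n}$, exactly as on $K_n$; the analogous one-round infection probabilities for \pull (namely $q(1+o(1))|S|/n$) and for \pushpull coincide with their complete-graph values as well.

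For (ii) I would invoke the theory of self-bounding functions from Section~\ref{basiclemmas_sec}. Conditioned on $S$, the number of vertices informed in the next round is a function of the independent random choices (the chosen neighbour and the Bernoulli$(q)$ success bit) of the vertices; for all three protocols this function is self-bounding, so its variance is at most its expectation, and Chebyshev's inequality (or the sharper self-bounding concentration inequality) shows the growth is concentrated whenever its expectation is $\omega(1)$. This lets me follow the trajectory of $|S|$ through its deterministic mean over the $\Theta(\log n)$ rounds, after a union bound. I then split the evolution into the two phases mirrored by the two summands of each constant: an exponential growth phase, in which $|S|$ multiplies per round by $(1+q)$, $(1+q)$, $(1+2q)$ for \push, \pull, \pushpull and which contributes the first summand of $c_{\cal P}$; and a completion phase, in which the uninformed set $\bar S$ shrinks geometrically, by the factor $e^{-q}$ per round for \push and by the corresponding factors $(1-q)$, $(1-q)e^{-q}$ for \pull, \pushpull, contributing the second summand. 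Adding the two phase lengths gives $c_{\cal P}(q)\log n+o(\log n)$.

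The main obstacle is that the Expander Mixing Lemma only controls \emph{aggregates}, and is informative only while $|S|$ and $|\bar S|$ are not too small relative to $\lambda_n n/\bar d$. The exponential phase is, perhaps surprisingly, not the difficulty: bounding the edges spanned inside $S$ by $\bar d|S|^2/n+\lambda_n|S|=o(\bar d|S|)$ shows that almost every \push leaves $S$, so the growth factor $(1+q)(1-o(1))$ is already secured for all $|S|=o(n)$ (and the very first rounds, where $|S|$ is only polylogarithmic, cost just $o(\log n)$ rounds and can be handled crudely). The delicate regime is the tail of the completion phase, where $|\bar S|$ is small yet this phase still contributes the non-negligible second summand. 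There I would argue in two steps. While $|\bar S|$ is moderately small, a Markov bound on $e(\bar S,\bar S)$ shows that all but a $o(1)$ fraction of uninformed vertices have $(1+o(1))\bar d$ informed neighbours, hence the correct survival probability, which together with the self-bounding concentration forces the $e^{-q}$ shrinkage; once $|\bar S|=o(\bar d)$, I switch to the crude but exact fact that every uninformed vertex then has at most $|\bar S|=o(\bar d)$ uninformed neighbours (since these lie in $\bar S$), so a union bound over the few survivors completes the process on schedule. Interlocking these regimes while keeping the accumulated $o(\cdot)$ errors within the $o(\log n)$ budget over $\Theta(\log n)$ rounds is the heaviest bookkeeping. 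Finally, the matching lower bounds follow by reading the same one-round estimates as upper bounds on the growth -- each informed vertex informs at most one other per \push round, and near regularity caps the per-round multiplication at $(1+q)(1+o(1))$ and the shrinkage of $\bar S$ at $e^{-q}(1-o(1))$ -- which concentration again transfers to the runtime.
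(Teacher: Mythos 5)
Your core strategy coincides with the paper's: one-round expectations via the Expander Mixing Lemma together with near-regularity, concentration via self-bounding functions chained over $\Theta(\log n)$ rounds, and a decomposition into an exponential-growth phase and a completion phase whose lengths give the two summands of $c_{\cal P}(q)$. This is exactly how the paper proves parts (b) and (c) (part (a) is delegated to an adaptation of \cite{Panagiotou2015}); the only cosmetic difference is that the paper's written proof of (b) tracks the edge count $e(I_t,U_t)$ rather than $|I_t|$, but that is only because the same proof must also serve the edge-deletion robustness theorem, where the bipartite graph between $I_t$ and $U_t$ can be irregular -- on an undeleted expander your vertex-count bookkeeping is fine.

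The one genuine divergence is the \emph{lower} bound for the completion phase, and this is also the one step that fails as you stated it. You claim the shrinkage rate transfers to a runtime lower bound ``by concentration'', citing that the number of vertices informed in the next round is self-bounding, hence has variance at most its expectation. But conditioned on $I_t$ one has $\textrm{Var}_t\bigl[|U_{t+1}|\bigr]=\textrm{Var}_t\bigl[|I_{t+1}|\bigr]$, and the self-bounding bound on $|I_{t+1}|$ only gives $\textrm{Var}_t\bigl[|I_{t+1}|\bigr]\le \mathbb{E}_t\bigl[|I_{t+1}|\bigr]\approx n$ in this phase, i.e.\ a fluctuation scale of order $\sqrt n$. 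This certifies the per-round lower bound $|U_{t+1}|\ge (1-o(1))e^{-q}(1-q)|U_t|$ only while $|U_t|\gg \sqrt n$, so you can follow $|U_t|$ from $n/\log n$ down to roughly $\sqrt n$ and recover only \emph{half} of the second summand (e.g.\ $\tfrac{1}{2q}\log n$ instead of $\tfrac1q\log n$ for \push). Two repairs are available. First, within your own framework: apply the self-bounding lemma to the number of \emph{newly} informed vertices $|I_{t+1}\setminus I_t|$ (the remark following Lemma \ref{appllugsi} states exactly this), whose conditional expectation is $O(|U_t|)$ in the completion phase; then Chebyshev gives relative error $O(|U_t|^{-1/3})$ per round, the errors telescope, and you can track $|U_t|$ down to polylogarithmic size, at which point the unaccounted rounds cost only $O(\log\log n)=o(\log n)$. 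Second, the paper's route (Lemmas \ref{pullLower} and \ref{PushPullLower}): dominate the true process by one in which every uninformed vertex becomes informed independently in each round with the maximal one-round probability ($q$ for \pull, $1-e^{-q\Delta_n/\delta_n}(1-q)$ for \pushpull); then each fixed vertex survives $c\tau$ rounds with probability $n^{-c+o(1)}=\omega(1/n)$, the survival events are independent, and a product bound shows whp some vertex is still uninformed. The paper's argument avoids variance estimates entirely in precisely the regime where they are delicate, which is why it is the cleaner choice, but your route is salvageable with fix one.
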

 The first statement is an extension of Theorem \ref{push} and its proof is a straigthforward adaption of the proof in \cite{Panagiotou2015}. We omit it. The   contribution here is the proof of (b) and (c). Next we consider the case with edge deletions in addition to the message transmission failures.
\begin{theorem}
\label{robust_thm}
Let $0<\varepsilon <1/2, q\in (0,1]$ and $\mathcal G=(G_n)_{n \in \N}$ be an expander sequence. Let $\tilde{\mathcal G}=(\tilde G_n)_{n \in \N}$ be such that each $\tilde G_n$ is obtained by deleting edges of $G_n$ such that each vertex keeps at least a $(1/2 + \varepsilon)$ fraction of its edges. Then whp
\begin{enumerate}[label={(\alph*)},ref={\thetheorem~(\alph*)}]
\itemsep0em 
\item $T_{pull}(\tilde G_n,q)= c_{pull}(q)\log n + o(\log n).$ \label{pullIsRobust}
\item  $T_{pp}(\tilde G_n,1)\leq c_{pp}(1)\log n + o(\log n),$ when additionally assuming that $\delta(G_n)\geq \alpha n$ for some $0<\alpha\leq 1.$ \label{pushPullIsRobust}
\end{enumerate} 
\end{theorem}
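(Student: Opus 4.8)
The plan is to prove both inequalities at once by tracking the number $|U_t|$ of uninformed vertices. Since $\mathcal G$ is an expander sequence we have $\Delta_n/\delta_n=1+o(1)$, so after the (fixed) deletion every vertex $w$ of $\tilde G_n$ has degree $d'_w$ with $(1/2+\varepsilon)\delta_n\le d'_w\le\Delta_n=(1+o(1))\delta_n$; in particular $\tilde G_n$ is regular up to a constant factor. Writing $I_t=V\setminus U_t$ for the informed set and $i_t=|I_t|/n$, a single \pull round gives
\[
\mathbb E[\,|U_{t+1}|\mid\mathcal F_t\,]=|U_t|-q\,\Phi_t,\qquad \Phi_t:=\sum_{w\in U_t}\frac{|N_{\tilde G_n}(w)\cap I_t|}{d'_w}.
\]
Everything reduces to showing that throughout the process $\Phi_t=(1+o(1))\,i_t|U_t|$, for then the recursion for $|U_t|$ matches, to leading order, the one on $K_n$: the informed set grows by the factor $(1+q)$ while $i_t=o(1)$ and the uninformed set decays by the factor $(1-q)$ while $|U_t|=o(n)$, which integrates to $c_{pull}(q)\log n$ rounds.

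\textbf{The key estimate.} The equality $\Phi_t=(1+o(1))i_t|U_t|$ holds as soon as a $(1+o(1))i_t$ fraction of the retained neighbours of almost every vertex is informed, i.e.\ $|N_{\tilde G_n}(w)\cap I_t|=(1+o(1))\,i_t d'_w$. This is exactly the point where \pull is resilient and \push is not: the adversary must fix $\tilde G_n$ before the random process starts, so its deletions cannot be aimed at the (random) set $I_t$. I would first apply the Expander Mixing Lemma (Lemma~\ref{expmix}) to $G_n$ in its second-moment form
\[
\sum_{w\in V}\Big(|N_{G_n}(w)\cap I_t|-\delta_n i_t\Big)^2\le \lambda_n^2\,|I_t|,
\]
which shows that all but a $o(1)$ fraction of vertices have a $(1+o(1))i_t$ fraction of their $G_n$-neighbours informed, provided $i_t=\omega\big((\lambda_n/\delta_n)^2\big)$. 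Since the deleted set $D_w:=N_{G_n}(w)\setminus N_{\tilde G_n}(w)$ is fixed and $|D_w|\le(1/2-\varepsilon)\delta_n$, transferring this to $\tilde G_n$ amounts to showing $|D_w\cap I_t|=(1+o(1))i_t|D_w|$ for almost all $w$, i.e.\ that $I_t$ is unbiased with respect to the $n$ fixed sets $\{D_w\}_{w}$. The hard part is precisely here: $I_t$ is not a uniform random set, so one must show that the \pull dynamics cannot build up a correlation between the evolving informed set and the frozen deletion pattern. I would do this by carrying a quasirandomness invariant for $I_t$ from round to round, using that each round's fresh choices are independent of the graph and re-applying the spectral bound above; the two extreme regimes $\min(i_t,1-i_t)=O\big((\lambda_n/\delta_n)^2\big)$, where the second-moment bound is too weak, would be treated separately by a crude branching-type lower bound on the very first rounds and by the observation that the last $o(n)$ vertices, each of whose retained neighbourhood is then almost entirely informed, are cleared in $o(\log n)$ extra rounds.

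\textbf{Concentration and assembly.} To upgrade the conditional expectation to a whp statement I would use the self-bounding machinery of Section~\ref{basiclemmas_sec}: $|U_{t+1}|$ is a function of the independent neighbour-choices of the uninformed vertices and flipping one choice changes it by at most $1$, so its conditional variance is bounded by its conditional expectation and the self-bounding concentration inequality yields exponential tails around $|U_t|-q\Phi_t$. A union bound over the $O(\log n)$ rounds then turns the round-wise estimate into the whp equality $T_{pull}(\tilde G_n,q)=c_{pull}(q)\log n+o(\log n)$, with the two directions coming from the matching upper and lower estimates on $\Phi_t$.

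\textbf{Part (b).} Here the clean cancellation is unavailable because \pushpull is sensitive to the degree irregularities the adversary can introduce, so I would exploit the extra hypothesis $\delta(G_n)\ge\alpha n$: then $\tilde G_n$ has minimum degree $\ge(1/2+\varepsilon)\alpha n$ and is dense, and I would apply Szemer\'edi's Regularity Lemma to $\tilde G_n$, obtaining an equitable partition $V_0\cup V_1\cup\dots\cup V_k$ with $|V_0|\le\epsilon n$ and all but $\epsilon k^2$ pairs $\epsilon$-regular. Collecting the per-cluster informed counts into a vector $x^{(t)}\in\R^k$, one \pushpull round with $q=1$ satisfies, while $\|x^{(t)}\|_1=o(n)$, a linear lower bound $x^{(t+1)}\ge(1-o(1))\,M x^{(t)}$, where the nonnegative matrix $M$ combines the push mass sent and the pull mass received between regular pairs, read off from their densities. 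Since on $K_n$ the analogous matrix has Perron root $3$ (push and pull each contribute a factor, on top of staying informed), and since (b) only asks for the upper bound on the runtime, the goal is to show $\rho(M)\ge 3-o(1)$ under the minimum-degree and $(1/2+\varepsilon)$-retention constraints; this gives early-phase tripling and hence the bound $c_{pp}(1)\log n=\log_3 n$ on the number of rounds until a constant fraction is informed, after which, in the dense graph, every remaining vertex has almost all of its neighbours informed and is reached within $o(\log n)$ further rounds. The main obstacle for (b) is twofold: proving the eigenvalue bound $\rho(M)\ge 3-o(1)$ for every density pattern consistent with the constraints, and controlling the additive $\Theta(\epsilon n)$ errors, the exceptional set $V_0$, and the $\epsilon k^2$ irregular pairs so that they perturb neither the eigenvalue nor the final vertex count by more than $o(\log n)$ rounds — which forces the regularity parameter $\epsilon=\epsilon(n)\to0$ to be chosen after, and far below, the expander parameters.
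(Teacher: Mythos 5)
Your plan for part (a) rests entirely on the key estimate $\Phi_t=(1+o(1))\,i_t|U_t|$, i.e.\ that at every round almost every vertex of $\tilde G_n$ sees an informed fraction $(1+o(1))i_t$ of its \emph{retained} neighbourhood. This per-round claim is not just hard, it is false at reachable states, and this is exactly why the paper abandons vertex counting for \pull. Concrete instance: take $G_n=K_n$, split $V=L\cup H$ evenly and delete almost all edges inside $H$ (each $H$-vertex keeps $2\varepsilon n$ edges inside $H$ plus all of $L$, so every vertex retains more than a $(1/2+\varepsilon)$ fraction). If the rumour starts at $v\in H$, then $|N_{\tilde G_n}(w)\cap I_t|=|I_t|-|D_w\cap I_t|$, and for $w\in H$ the frozen deletion set $D_w$ contains almost all of $H\supseteq I_t$; the expected one-round gain is $\approx q|I_t|/2$, so $\Phi_t\approx \tfrac12 i_t|U_t|$, off by a constant. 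Note that here your spectral step is vacuous ($K_n$ has $\lambda_n=1$, so every vertex sees a perfectly unbiased $G_n$-neighbourhood): the bias lives \emph{entirely} in the frozen sets $D_w$, and it is created by the dynamics itself, since vertices whose deleted edges point toward the source region are informed later. Consequently the transfer step you identify as the hard part cannot be carried by any property of $G_n$, and propagating ``unbiasedness of $I_t$ w.r.t.\ $\{D_w\}$'' from round to round is circular unless you produce and prove a quantitative invariant, which the proposal does not. There is also a quantitative obstruction in the sparse case: your second-moment bound controls the number of $\eta$-biased vertices by $O(\lambda_n^2n^2/(\delta_n^2|I_t|))$, which is $o(|I_t|)$ only once $|I_t|=\omega(n\lambda_n/\delta_n)$; since expander sequences merely require $\lambda_n=o(\Delta_n)$ (e.g.\ $\lambda_n/\delta_n=1/\log\log n$), your ``crude'' early regime can then cover all but $o(\log n)$ of the growth phase, and a crude bound there loses the constant $1/\log(1+q)$. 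What is true is only an \emph{amortized} version of your estimate (slow rounds are compensated by fast ones), and the paper's proof is precisely the discovery of the right amortizing observable: by Lemma \ref{expResPull_a} the edge boundary $e(U_t,I_t)$ grows by $1+q\pm|I_t|^{-1/3}$ for \emph{every} admissible configuration $I_t$ — this needs only the weak fact (Lemma \ref{fraction}) that all but $o(|I_t|)$ vertices satisfy $|N(u)\cap I_t|=o(|N(u)|)$, not unbiasedness — while $|I_t|$ stays within a constant factor of $e(U_t,I_t)/\Delta_n$ by Lemmas \ref{expmix} and \ref{linearFraction}, so the constant-factor slack costs only $O(1)$ rounds.

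For part (b) you have rediscovered the paper's strategy (Szemer\'edi partition of $\tilde G_n$, per-cluster vector recursion, Perron root at least $3-o(1)$ for $q=1$, then finish using density), but the two steps you call ``the main obstacle'' are the actual content of the proof and are left open; they occupy Lemmas \ref{Partition}, \ref{RegularRecursion} and \ref{pushpullResUpper}. The eigenvalue bound in particular admits a short argument you could have completed: the relevant matrix is symmetric with entries $\mathbb{1}[i=j]+cq\bigl(d_{ij}/\sum_{\ell}d_{i\ell}+d_{ij}/\sum_{\ell}d_{\ell j}\bigr)$, so $\lambda_{\max}\geq \frac1k\sum_{i,j}A_{ij}$, and each of the two families of fractions sums over $j$ (resp.\ $i$) to $\approx k$, giving $\lambda_{\max}\geq 1+2cq-O(\eta/\alpha)$; the minimum degree $\alpha n$ is needed only for the error terms and for the start-up argument (Lemma \ref{CountingPaths}), not for the eigenvalue itself. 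Finally, your suggestion to let the regularity parameter tend to $0$ with $n$ is the wrong move: the paper fixes a constant $\eta=\eta(\nu)$ for each target accuracy $\nu>0$, obtains $\lambda_{\max}\geq 1+2q-\nu$ with a bounded number of parts, and lets $\nu\to0$ only at the end; a parameter $\epsilon(n)\to0$ inside Szemer\'edi's lemma forces the number of parts to grow (tower-fast) with $n$ and breaks the union bounds over parts as well as the start-up requirement $X_{t,i}\geq\log\log n$.
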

This result demonstrates uncoditionally the robustness of \pull, and conditionally on $q=1$ the robustness of \pushpull on dense graphs, in the case of edge deletions, that is, the runtime is asymptotically the same as in the complete graph. It even shows that \pushpull may potentially profit from edge deletions in contrast to being slowed down. The proof of this result, especially the statement about \pushpull, is rather involved, since the original graph may become quite irregular after the edge deletions. Here we use, among many other ingredients, the aforementioned decomposition of the graph given by Szemeredi's regularity lemma.

Note that Theorem \ref{robust_thm} does not consider \push and \pushpull (when $q\neq 1$) at all. Indeed, our next result states that in these cases the behaviour is rather different and that the algorithms may be  slowed down.
\begin{theorem}
\label{not_robust_thm}
Let $\varepsilon >0$ and $q \in (0,1]$. Then there is an expander sequence $\mathcal G=(G_n)_{n \in \N}$ and a sequence of graphs $\tilde{\mathcal G}=(\tilde G_n)_{n\in\N}$ with the following properties. Each $\tilde G_n$ is obtained by deleting edges of $G_n$ such that each vertex keeps at least a $(1-\varepsilon)$ fraction of its edges. Moreover, whp 
\begin{enumerate}[label={(\alph*)},ref={\thetheorem~(\alph*)}]
\itemsep0em 
\item $T_{push}(\tilde G_n,q)\geq c_{push}(q) \log n+\varepsilon/(2q)\log n + o(\log n).$ \label{lastPhasePushNotRobust}
\item
 $T_{pp}(\tilde G_n,q)\geq c_{pp}(q)\log n +\left(\varepsilon/(8q)-\varepsilon q^3/5\right)\log n+o(\log n).$
 \label{pushPullIsNotRobustq} 
\end{enumerate}
\end{theorem}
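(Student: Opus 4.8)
The plan is to prove both statements as runtime \emph{lower} bounds by exhibiting, for each fixed $q$ and $\varepsilon$, one explicit hard instance: a dense expander together with an admissible edge deletion that creates a linear-sized set $B$ which the rumour reaches slowly. I would take $G_n=K_n$, which is an expander sequence ($\delta=\Delta=n-1$ and $\lambda=1=o(n)$), fix a small constant $\beta\in(0,1/2)$, single out a set $B\subseteq V(G_n)$ with $|B|=\beta n$, and write $A=V(G_n)\setminus B$. To obtain $\tilde G_n$ I delete, for every $v\in B$, an $\varepsilon$-fraction of its edges, chosen entirely among the $A$--$B$ edges and spread over $A$ as evenly as possible. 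Each $v\in B$ then keeps $(1-\varepsilon)(n-1)$ edges; the total number of deleted edges is $\beta n\cdot\varepsilon(n-1)$, so an $A$-vertex loses on average $\beta\varepsilon(n-1)/(1-\beta)$ edges, which for $\beta<1/2$ is below the budget $\varepsilon(n-1)$, so the deletion is admissible.

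The effect of the deletion is to lower the rate at which the last uninformed vertices of $B$ receive the rumour. In the final phase, when almost all vertices are informed, an uninformed $v\in B$ is informed by \push at rate $\sum_{u\in N(v)}q/d_u$. Two competing things happen: $v$ has fewer neighbours, but every deleted $A$--$B$ edge also lowers an $A$-degree, so the $A$-vertices aim at $B$ slightly more often; a short first-order computation in $\varepsilon$ shows that the net effective push rate into $B$ is $(1-\Theta(\varepsilon))q$, and choosing $\beta$ appropriately one can guarantee it is at most $(1-\varepsilon/2)q$. For the \pull direction I only need an upper bound on how fast $B$ fills: an uninformed $v$ is informed by a \pull attempt with probability at most $q$ per round, so it survives pull with probability at least $1-q$, \emph{independently} of the push attempts. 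Hence in \pushpull the final phase for $B$ proceeds at rate at most $(1-\varepsilon/2)q-\log(1-q)$, which tends to the unperturbed rate $q-\log(1-q)$ as $q\to1$; this is precisely why the slowdown in (b) degrades to $0$ at $q=1$, consistently with Theorem~\ref{robust_thm}.

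To turn these rates into a lower bound I would split the process at a time $t_1=(1-\delta)\log_{1+q}n$ for \push (respectively $t_1=(1-\delta)\log_{1+2q}n$ for \pushpull), with $\delta\to0$. Before $t_1$ a generic growth bound — the informed set can grow by a factor at most $1+q$ (respectively $1+2q$) per round, with concentration from Section~\ref{basiclemmas_sec} — shows that at most $n^{1-\delta}(1+o(1))$ vertices are informed, so whp at least $(\beta/2)n$ vertices of $B$ are still uninformed at $t_1$. From $t_1$ on I track $U_B(t)$, the number of uninformed vertices in $B$. The rate bound above gives per-round survival at least $e^{-r_1}$ with $r_1=(1-\varepsilon/2)q$ (respectively $r_1=(1-\varepsilon/2)q-\log(1-q)$), so the expected value satisfies $E[U_B(t_1+s)]\ge(\beta/2)n\,e^{-r_1 s(1+o(1))}$, which still tends to infinity for $s\le \log n/r_1-o(\log n)$. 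The number of $B$-vertices newly informed in a round is a self-bounding function of the independent choices made that round, so the variance bound of Section~\ref{basiclemmas_sec} concentrates $U_B$ around its mean while the mean is polynomially large; this upgrades the estimate to $U_B\ge1$ whp, giving
\[
T_{\mathcal P}(\tilde G_n,q)\ \ge\ t_1+\frac{\log n}{r_1}-o(\log n).
\]
Letting $\delta\to0$ yields $c_{push}(q)\log n+\big(\tfrac{1}{(1-\varepsilon/2)q}-\tfrac1q\big)\log n$ for \push and, using $r_1\le r_0:=q-\log(1-q)$, $c_{pp}(q)\log n+\tfrac{\varepsilon q/2}{r_0r_1}\log n$ for \pushpull; bounding $\tfrac{1}{(1-\varepsilon/2)q}-\tfrac1q\ge\tfrac{\varepsilon}{2q}$ and checking $\tfrac{q}{2r_0^2}\ge\tfrac{1}{8q}-\tfrac{q^3}{5}$ on $(0,1]$ are elementary one-variable inequalities.

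The hard part will be the lower-bound concentration across many rounds: I must ensure that $U_B$ does not fall below its mean trajectory, which requires the informing rate into $B$ to be uniformly bounded by $r_1(1+o(1))$ at \emph{every} round (not only in the idealised final phase) and the inter-vertex and inter-round dependencies to be controlled — this is exactly where the self-bounding variance bound is indispensable, since a round-by-round second-moment estimate by hand would be unwieldy. Two further points require care: the unavoidable loss of $A$-degrees must be tracked so that the effective push rate is \emph{provably} at most $(1-\varepsilon/2)q$ rather than merely heuristically so; and for \pushpull one must confirm that \pull cannot inform $B$ faster than the crude $(1-q)$ survival bound, so that no hidden acceleration spoils the degeneration as $q\to1$. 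Finally, the seam between the growth phase and the slow final phase at $t_1$ must be glued without losing more than $o(\log n)$, which is why the growth bound is taken with an arbitrarily small $\delta$.
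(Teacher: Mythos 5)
Your part (a) is correct and is essentially the paper's own proof (Section \ref{push_slow_sec}): an explicit dense graph with a linear class $B$ whose incoming push rate $q\sum_{v\in N(u)}1/d(v)$ is depressed to at most $(1-\varepsilon/2)q$, a factor-$(1+q)$ growth bound for the early phase, and a per-vertex survival estimate for the endgame. Deleting $A$--$B$ edges instead of edges inside $B$, and replacing the paper's modified-process independence argument by expectation tracking plus the self-bounding variance bound, are immaterial variations.

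Part (b) has a genuine gap, and it sits exactly at the step you call ``a generic growth bound'': for \pushpull the claim that $|I_{t+1}|\le (1+2q+o(1))|I_t|$ per round is \emph{false} on graphs with unequal degrees. The one-round pull gain is $q\sum_{v\in I_t}\sum_{u\in N(v)\cap U_t}1/d(u)$, which is only bounded by $q|I_t|\,\Delta/\delta$ in general; this is the star-graph effect the paper warns about, and it is intrinsic to your setting: if you make the graph regular enough for the generic bound to hold (e.g.\ your construction with $\beta=1/2$ is $(1-\varepsilon)(n-1)$-regular), then the push rate into \emph{every} vertex is exactly $q$ and there is no endgame slowdown at all, while any degree irregularity that slows push into $B$ simultaneously lets pull exceed rate $q|I_t|$ (in your construction, if $I_t$ lies in the high-degree class $A$, the expected one-round growth is $1+2q+q\beta\varepsilon(1-2\beta)/(1-\beta)>1+2q$). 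The best bound you can justify without further structure is therefore $1+q+q\Delta/\delta=1+2q+\Theta(q\varepsilon)$, and with it your first phase shrinks to $\log_{1+2q+\Theta(q\varepsilon)}n$, a loss of about $\frac{q\varepsilon}{(1+2q)\log^2(1+2q)}\log n$ rounds against a gain of only about $\frac{q\varepsilon}{2r_0^2}\log n$, $r_0=q-\log(1-q)$; since $\log(1+x)\le x/\sqrt{1+x}$ gives $(1+2q)\log^2(1+2q)\le 4q^2$ while $r_0\ge 2q$ gives $2r_0^2\ge 8q^2$, the loss is at least twice the gain for every $q\in(0,1)$, so your net bound is vacuous. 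Repairing this is the mathematical core of part (b), and it is what the paper's Section \ref{edge_deletions_slow_down_push_pull_for_q_smaller_than_1} does: set up the two-block recursion $\mathbb{E}_t[X_{t+1}]=(1+o(1))MX_t$ (Lemma \ref{push_pull_exa_exp}), identify the first-phase speed with the largest eigenvalue of $M$ --- which is genuinely larger than $1+2q$ (Remark \ref{lambdaMax}), though only by $O(\varepsilon^2)$ --- and prove matching concentration (Lemma \ref{pushpull_slow}). The negative term $-\varepsilon q^3/5$ in the statement is precisely the price of this first-phase acceleration; a proof that never quantifies it cannot produce that constant.

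Separately, your closing ``elementary one-variable inequality'' is false: with $r_0=q-\log(1-q)$, the inequality $\frac{q}{2r_0^2}\ge\frac{1}{8q}-\frac{q^3}{5}$ fails for all $q$ below roughly $3/4$; at $q=1/2$ the left-hand side is $\frac{1/2}{2(1/2+\log 2)^2}\approx 0.176$ while the right-hand side is $0.225$. Since for small $\varepsilon$ the endgame gain of any construction of this type is $(1+O(\varepsilon))\frac{q\varepsilon}{2r_0^2}$, the constant $\varepsilon/(8q)-\varepsilon q^3/5$ cannot be certified by this accounting when $q$ is small --- note that the paper itself verifies positivity of the net slowdown only through a plot restricted to $q\in(0.9,1)$ --- so a complete write-up must confront the small-$q$ regime explicitly rather than delegate it to a routine check.
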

Nevertheless, not all hope is lost. On the positive side, the next result states that \push and \pushpull  are able to inform \emph{almost} all vertices as fast as on the complete graph in spite of adversarial edge deletions. In this sense, we obtain an almost-robustness result for these cases.
\begin{theorem}
\label{first_robust_thm}
Let $0<\varepsilon <1/2, q\in (0,1]$ and $\mathcal G=(G_n)_{n \in \N}$ be an expander sequence. Let $\tilde{\mathcal G}=(\tilde G_n)_{n \in \N}$ be such that each $\tilde G_n$ is obtained by deleting edges of $G_n$ such that each vertex keeps at least a $(1/2 + \varepsilon)$ fraction of its edges. For ${\cal P}\in\{push, pp\}$ let $\tilde{T}_{\cal P}$ denote the number of rounds needed  to inform at least $n-n/\log n$ vertices. Then whp
\begin{enumerate}[label={(\alph*)},ref={\thetheorem~(\alph*)}]
\itemsep0em 
\item $\tilde{T}_{push}(\tilde G_n)= \log_{1+q}(n) + o(\log n).$ \label{firstPhasePushRobust} 
\item $\tilde{T}_{pp}(\tilde G_n)\leq  \log_{1+2q}(n) + o(\log n),$ when additionally assuming that $\delta(G_n)\geq \alpha n$ for some $0<\alpha\leq 1$. \label{firstPhasePushPullRobust}
\end{enumerate} 
\end{theorem}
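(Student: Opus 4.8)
The plan is to read both claims as statements about the \emph{exponential growth} of the informed set, and to split the evolution into a \emph{growth phase} that carries the process from one informed vertex to $n/\log n$ informed vertices, followed by a short \emph{mop-up phase} from $n/\log n$ to $n-n/\log n$. Write $I_t$ for the number of informed vertices after round $t$. The matching lower bound in (a) is immediate on \emph{every} graph: each informed vertex issues exactly one push that succeeds with probability $q$, so the number of newly informed vertices is dominated by a $\mathrm{Bin}(I_t,q)$ variable and $\mathbb{E}[I_{t+1}\mid I_t]\le(1+q)I_t$. Hence $\mathbb{E}[I_t]\le(1+q)^t$, and a Markov bound gives that whp $I_t<n/2$ for every $t\le\log_{1+q}n-o(\log n)$, so $\tilde T_{push}\ge\log_{1+q}n-o(\log n)$. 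Since (b) only asserts an upper bound, in both parts the substance is to show that the deletions cost at most $o(\log n)$ extra rounds.

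\textbf{Growth phase for \push.} While $i:=|I_t|=o(n)$ I would show that one round multiplies the informed set by $1+q-o(1)$. The number of successful pushes is $\mathrm{Bin}(i,q)$ with mean $qi$, and such a push fails to create a new vertex only if it lands inside $I$ or collides with another successful push. As $\tilde G_n\subseteq G_n$ and every vertex keeps at least a $(1/2+\varepsilon)$-fraction of its degree, the expected number of pushes landing inside $I$ is at most $q\cdot 2e_{\tilde G_n}(I)/((1/2+\varepsilon)\delta_n)$; the expander mixing lemma (Lemma~\ref{expmix}) applied to $G_n$ gives $e_{\tilde G_n}(I)\le e_{G_n}(I)\le\tfrac{\Delta_n}{2}\tfrac{i^2}{n}+O(\lambda_n i)$, which with $\Delta_n/\delta_n=1+o(1)$ and $\lambda_n=o(\delta_n)$ is $o(i)$ whenever $i=o(n)$. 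A second-moment estimate bounding the number of length-two paths with both endpoints in $I$ shows the expected number of collisions is $o(i)$ as well (here $\delta_n\to\infty$ enters). Thus $\mathbb{E}[I_{t+1}\mid I_t]=(1+q-o(1))i$. For concentration I would use the self-bounding machinery of Section~\ref{basiclemmas_sec}: the number of newly informed vertices is a self-bounding function of the independent choices of the informed vertices, so its variance is at most its expectation. Once $I_t$ exceeds a polylogarithmic threshold this yields $I_{t+1}=(1+q-o(1))I_t$ whp, uniformly over the $O(\log n)$ relevant rounds, with the few earlier rounds absorbed into the error term. Iterating, the process reaches $n/\log n$ informed vertices within $\log_{1+q}n+o(\log n)$ rounds.

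\textbf{Mop-up phase for \push.} From $i\ge n/\log n$ I would argue that only $o(\log n)$ further rounds are needed, and this is exactly where the threshold $1/2+\varepsilon$ is used. Let $x=i/n$. For $x\le 1/2$ the computation above still gives $\mathbb{E}[I_{t+1}\mid I_t]\ge(1+q(1-x/(1/2+\varepsilon))-o(1))i$, a factor bounded away from $1$ by a positive constant depending only on $q,\varepsilon$, so the informed fraction climbs from $1/\log n$ to $1/2$ in $O(\log\log n)$ rounds. For $x\ge 1/2$ I would instead track the uninformed set $U$: bounding $\sum_{u\in U}|N_{\tilde G_n}(u)\cap U|=2e_{\tilde G_n}(U)\le 2e_{G_n}(U)$ via Lemma~\ref{expmix} shows the average uninformed vertex has at least $((1/2+\varepsilon)-(1-x)-o(1))\delta_n\ge(\varepsilon-o(1))\delta_n$ informed neighbours; a reverse-Markov argument then yields that a constant fraction of $U$ receives a successful push each round, so $|U|$ shrinks by a constant factor per round and drops below $n/\log n$ in $O(\log\log n)$ rounds. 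It is precisely the overlap of the two regimes --- growth works for $x<1/2+\varepsilon$, shrinkage works for $x>1/2-\varepsilon$ --- that removes any ``stuck'' range of densities, and this overlap is nonempty exactly because more than half of every degree is retained.

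\textbf{The case of \pushpull.} For (b) the target rate is $1+2q$, since in the exponential regime push and pull each contribute a factor $1+q$; as only the upper bound is claimed I would focus on that. The difficulty is that deletions can make $\tilde G_n$ highly irregular, and the pull direction is sensitive to this, so the heuristic ``informed fraction of neighbours $\approx$ global informed fraction'' can fail locally. To control this I would use the dense hypothesis $\delta(G_n)\ge\alpha n$ together with Szemer\'edi's regularity lemma to partition the vertices into boundedly many clusters $V_1,\dots,V_k$ with almost all pairs $\eta$-regular of density $d_{ij}$, and track the vector of cluster-wise informed fractions $(a_1^{(t)},\dots,a_k^{(t)})$. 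Regularity turns one round of \pushpull in the exponential phase into an approximate linear recursion $a^{(t+1)}\approx M a^{(t)}$, where $M=M(q,(d_{ij}))$ carries a push term and a pull term, and the growth rate is the leading eigenvalue $\rho(M)$; the mop-up phase is then easy, as $\delta(G_n)\ge\alpha n$ forces every vertex to keep $\Theta(n)$ neighbours. I expect the \textbf{main obstacle} to be proving $\rho(M)\ge 1+2q-o(1)$: one must show that keeping more than half of every (dense) degree prevents the spectral growth rate of the combined push/pull operator from falling below the complete-graph value $1+2q$, uniformly over the admissible regular densities $d_{ij}$. This spectral estimate, rather than any single-round computation, is the crux of part (b), consistent with the fact that \pushpull with $q<1$ \emph{can} be slowed down (Theorem~\ref{not_robust_thm}) once completion of the final $n/\log n$ vertices is required.
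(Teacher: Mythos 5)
Your overall architecture (growth phase to $n/\log n$, short mop-up to $n-n/\log n$) matches the paper's, and your lower bound for (a) --- $\mathbb{E}_t[|I_{t+1}|]\le(1+q)|I_t|$ plus Markov --- is correct and in fact simpler than the paper's. But the proposal has two genuine gaps. For (a), the gap is the collision estimate in the growth phase. From the bounds you actually establish ($d_{\tilde G_n}(v)\ge(1/2+\varepsilon)\delta_n$ and $e_{\tilde G_n}(I_t)\le e_{G_n}(I_t)=o(\Delta_n)|I_t|$), the cherry count $\sum_w|N_{\tilde G_n}(w)\cap I_t|^2$ is only bounded by $O(\Delta_n^2|I_t|)$, not $o(\delta_n^2|I_t|)$: nothing so far prevents the roughly $\Delta_n|I_t|$ edges leaving $I_t$ from concentrating on about $|I_t|/2$ midpoints, each then seeing $\approx\delta_n$ informed neighbours, in which case the expected number of colliding pushes is $\Theta(|I_t|)$ and the per-round growth factor stays a constant strictly below $1+q$. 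This is exactly the slow-down an adversary deleting up to half of each neighbourhood tries to engineer, and it cannot be excluded by $\delta_n\to\infty$ alone (a disjoint union of large cliques has huge degrees and exhibits precisely this behaviour); what excludes it is the expansion of $G_n$, e.g.\ via Lemma \ref{fraction_b}, which shows that only $o(|I_t|)$ vertices carry more than an $\eta$-fraction of their $\tilde G_n$-neighbourhood inside $I_t$, whence $\sum_w|N_{\tilde G_n}(w)\cap I_t|^2=o(\Delta_n^2|I_t|)$. With that ingredient your direct first/second-moment route is repairable; note that the paper instead sidesteps the computation entirely by coupling each push in $\tilde G_n$ to the corresponding push in $G_n$ (they agree with probability $d_{\tilde G_n}(v)/d_{G_n}(v)\ge1/2+\varepsilon$) and showing, by contradiction plus binomial anti-concentration, that many wasted pushes in $\tilde G_n$ would force many wasted pushes in $G_n$, contradicting the known near-doubling on expanders (Lemma \ref{increaseByFactor2lemma}).

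For (b), your outline coincides with the paper's (Szemer\'edi partition, cluster-wise linear recursion, leading eigenvalue), but you explicitly leave the inequality $\rho(M)\ge1+2q-o(1)$ open as ``the main obstacle'', so what you have is a plan rather than a proof. In the paper this step is a short averaging argument, not a uniform spectral analysis over admissible densities: the relevant matrix $A$, with $A_{ii}=1$ and $A_{ij}=q\bigl(d_{ij}/\sum_\ell d_{i\ell}+d_{ij}/\sum_\ell d_{\ell j}\bigr)$, is symmetric, hence $\lambda_{\max}\ge\frac1k\sum_{i,j}A_{ij}$; and each of the two fractional families sums to exactly $k$ (over rows, respectively columns), so $\lambda_{\max}\ge1+2q$ up to $O(\eta)$ corrections from irregular pairs and exceptional sets (Lemma \ref{RegularRecursion}). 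Notably, this eigenvalue bound holds for \emph{arbitrary} densities $d_{ij}$; the retention fraction $1/2+\varepsilon$ and the hypothesis $\delta(G_n)\ge\alpha n$ are not what keeps the growth rate above $1+2q$ --- they are used to control the error terms, the startup, and the mop-up phase (the latter via the robustness of \pull).
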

We conjecture that there is also a version of Theorem \ref{firstPhasePushPullRobust} that is true for \pushpull on sparse graphs; to be precise we conjecture that in the setting of Theorem \ref{firstPhasePushPullRobust}  $\tilde{T}_{pp}(\tilde G_n)\leq\log_{1+2q}(n) + o(\log n),$
without further restrictions on $G_n$, i.e.~that \pushpull can not be slowed down informing \emph{almost} all vertices.
 
As a final remark note that Theorems \ref{robust_thm} and \ref{first_robust_thm} are tight in the sense that if an adversary is allowed to delete up to half of the edges at each vertex, then there are expander graphs that become disconnected such that their components have linear size. On those graphs a linear fraction of the vertices will remain uninformed forever. 

\paragraph{Outline}
The rest of this paper is structured as follows. In Section \ref{basiclemmas_sec} we collect and prove several important facts;  this part of the paper also contains our technical contribution concerning the analysis through self-bounding functions. In Subsection \ref{pull_fast_sec} we show that \pull is as fast on expanders with (or without) deleted edges as it is on the complete graph. Subsection \ref{push_pull_fast_subsec} treats \pushpull on expanders without deleted edges. In the remaining subsections we focus on the cases that may be slowed down by edge deletions.
In Subsection \ref{push_robust_sec} we show that adversarial edge deletions cannot slow down the time until \push has informed almost all vertices, by giving a coupling to the case without edge deletions. Contrary in Subsection \ref{push_slow_sec} we show that the time until \push has informed all vertices can be slowed down by edge deletions, even if only few edges are deleted.   Then, in Subsection \ref{push_pull_informs _almost_all nodes_fast} we show that \pushpull informs almost all vertices of dense graphs fast in spite of adversarial edge deletions. We utilize a version of Sz\'emeredis Regularity Lemma to get a well-behaved partition of the vertex set that is suitable for performing a round based analysis. However, if $q<1$, adversarial edge deletions can slow down or speed up the time until \pushpull has informed all vertices for nearly all values of $q$; we show this in Section \ref{edge_deletions_slow_down_push_pull_for_q_smaller_than_1}.  

\paragraph{Further Notation} Let $G = (V,E)$ denote a graph with vertex set $V$ and edge set $E\subseteq \binom{V}{2}$. Consider $v\in V$ and $U,W\subseteq V$ with $U\cap W = \emptyset$. We will denote the set of neighbours of $v$ in $G$ by $N_G(v)$ or by $N(v)$ and we will denote its degree by $d_G(v) := |N_G(v)|$ or by $d(v)$; $\delta_G$ or $\delta$ and $\Delta_G$ or $\Delta$ denote minimum and maximum degree of $G$. Similarly the neighbourhood of any set of vertices $S\subseteq V$ is defined by $N_G(S) := \cup_{v\in S}N_G(v)$. Furthermore let $E(U,W)=E_G(U,W)$ denote the set of edges with one vertex in $U$ and one vertex in $W$ and let $e(U,W):=e_G(U,W):=|E_G(U,W)|$.  With $E_G(U)$ we denote the set of edges with both vertices in $U$; $e_G(U)=|E_G(U)|$. 
For any round $t \in \N$ and ${\cal P}\in\{push, pull, pp\}$, we denote by $I_{t}^{({\cal P})}(G)$ the set of vertices of $G$ informed by \push, \pull and \pushpull respectively at the beginning of round $t$ and $|I^{({\cal P})}_1|=1$; if the underlying graph is clear from the context we will omit it; if we consider a sequence of graphs $\mathcal{G}=(G_n)_{n \in \N}$ and a sequence of times $t = (t(n))_{n\in \N}$, then $I_t^{({\cal P})}(\mathcal{G})=(I_{t(n)}^{({\cal P})}(G_n))_{n\in\mathbb{N}}$ is also a sequence.  Similarly, $U_{t}^{({\cal P})}:= V\backslash I_{t}^{({\cal P})}$ denotes the set of uninformed vertices. With $\log$ we refer to the natural logarithm. For any event $A$ we will write $\mathbb{E}_{t}[A]$ instead of $\mathbb{E}[A\big|I_t]$ for the conditional expectation and $P_{t}[A]$ instead of $P[A\big|I_t]$ for the conditional probability. Finally we want to clarify our use of Landau symbols. Let $a,b\in \mathbb{R}$ and $f$ be a function. The terms $a\leq b+o(f)$ and $a \geq b-o(f)$ mean that there exist positive functions $g,h\in o(f)$ such that  $a\leq b+g$ and $a \geq b-h$. Consequently $a=b+o(f)$ means that there exists a positive function $g\in o(f)$ such that $a\in [b-g,b+g]$

\section{Tools \& Techniques}
In this section we collect and prove statements about our protocols and properties of expander sequences. We begin with applying the previously mentioned notion of self-bounding functions to derive universal and simple-to-apply concentration results for our random variables, i.e., the number of informed vertices after a particular round. Then we extend the concentration results to more than one round. In the last part we recall the well known Expander Mixing Lemma and utilize it to derive properties (weak expansion, path enumeration) for the case where we delete edges from our graphs.
 
\paragraph{Self-bounding functions.} 
Our main technical new result in this section is the following bound on the variance for the number of informed vertices in any given round; it is true for any graph and any set of informed vertices.
\label{basiclemmas_sec}
\begin{lemma}\label{lugosiappl}
Let $G$ be a graph, $t \in \mathbb{N}$ and $I_t=I_t^{({\cal P})}(G)$ for ${\cal P} \in \{push, pull, pp\}$. Then
 $$\textrm{Var}\big[|I_{t+1}|\big|I_t\big]\leq \mathbb{E}\big[|I_{t+1}|\big|I_t\big].$$
\end{lemma}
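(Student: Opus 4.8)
The plan is to recognise the number of newly informed vertices in a single round as a \emph{self-bounding function} of independent inputs and to invoke the associated variance bound. Recall that a nonnegative function $f(x_1,\dots,x_m)$ of independent coordinates is self-bounding if there exist functions $f_i$, each not depending on the $i$-th coordinate, such that $0\le f(x)-f_i(x_{-i})\le 1$ for all $i$ and $x$, and $\sum_{i=1}^{m}\bigl(f(x)-f_i(x_{-i})\bigr)\le f(x)$; for any such $f$ one has $\mathrm{Var}[f]\le\mathbb{E}[f]$. First I would condition on $I_t$ and write $|I_{t+1}|=|I_t|+Z$, where $Z$ counts the vertices that become newly informed during round $t$. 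As $|I_t|$ is deterministic given $I_t$, it then suffices to prove that $Z$ is self-bounding, since this yields $\mathrm{Var}\bigl[|I_{t+1}|\,\big|\,I_t\bigr]=\mathrm{Var}[Z\mid I_t]\le\mathbb{E}[Z\mid I_t]\le\mathbb{E}\bigl[|I_{t+1}|\,\big|\,I_t\bigr]$.

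The independent inputs are the random actions taken in round $t$ (each comprising a uniformly chosen neighbour together with an independent success bit of probability $q$): for \push I index the coordinates by $I_t$, for \pull by $U_t$, and for \pushpull by $V$. In every case I define $f_v(x_{-v})$ to be the number of newly informed vertices in the modified round in which vertex $v$ takes no action; this is manifestly a function of the remaining coordinates, and since actions can only spread information, $f\ge f_v$, giving the lower bound $0$ in the self-bounding condition.

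For \pull the claim is immediate: each $u\in U_t$ becomes informed purely on the basis of its own choice, so $Z=\sum_{u\in U_t}\mathbf{1}[u\text{ informed}]$, deleting $u$ removes exactly this indicator, every difference lies in $\{0,1\}$, and the differences sum to $Z$. For \push each $v\in I_t$ can inform only the single neighbour it selects, so deleting $v$ changes $Z$ by $0$ or $1$; moreover $f(x)-f_v(x_{-v})=1$ exactly when $v$ successfully informs a vertex reached by no other informed vertex, whence $\sum_v\bigl(f-f_v\bigr)$ equals the number of newly informed vertices having a unique informing source, which is at most $Z$.

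I expect \pushpull to be the main obstacle, because here an uninformed vertex can be informed through two competing channels — by pulling from an informed neighbour (its own coordinate) or by being pushed to by some informed vertex (that vertex's coordinate) — and the self-bounding conditions must be checked in their presence. The decisive structural facts are that each coordinate influences the fate of at most one uninformed vertex: an informed $v$ pushes only to its chosen neighbour $c(v)$, while an uninformed $v$ can inform only itself. Consequently deleting any single coordinate changes $Z$ by at most one, which gives the bounded-difference condition. For the sum condition I would attach to each newly informed vertex $w$ the collection of its \emph{informing sources} (its own successful pull, if any, together with every successful push aimed at it) and observe that a coordinate contributes $1$ to $\sum_v(f-f_v)$ precisely when it is the \emph{unique} source of some $w$. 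Since a vertex with two or more sources is claimed by no coordinate and a vertex with a single source by exactly one, $\sum_v(f-f_v)$ equals the number of newly informed vertices that possess a unique source and is therefore at most $Z$. This confirms that $Z$ is self-bounding in all three models and completes the argument.
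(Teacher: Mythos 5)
Your proposal is correct and follows essentially the same route as the paper: condition on $I_t$, view the round as a function of the independent per-vertex choices (with success bits), verify the self-bounding property, and invoke the variance bound of Boucheron, Lugosi and Bousquet. The only difference is presentational — the paper writes $f$ and the $f_i$ as explicit indicator sums with a ``first pusher'' tie-breaking rule and, for \pushpull, makes the push and pull summands complementary, whereas you define $f_v$ operationally (delete $v$'s action) and verify the sum condition by your unique-source counting argument; both verifications are sound.
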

Lemma \ref{lugosiappl} follows directly from Lemmas \ref{lugosi} and \ref{appllugsi}. Before stating them we introduce the notion of self-bounding functions.
\begin{definition}[Self-bounding function]
Let $X$ be a set and $m\in\mathbb{N}$. A non-negative function $f:X^m\rightarrow \mathbb{R}$ is self-bounding, if there exist functions $f_i:X^{m-1}\rightarrow \mathbb{R}$ such that for all $x_1, ..., x_m \in X$ and all $i=1, ...,m$ 
$$0\leq f(x_1,...,x_m)-f_i(x_1, ..., x_{i-1},x_{i+1},...,x_m)\leq 1$$
and
$$\sum\limits_{1\leq i \leq m}(f(x_1,...,x_m)-f_i(x_1, ..., x_{i-1},x_{i+1},...,x_m))\leq f(x_1,...,x_m).$$
\end{definition}
As striking property of self-bounding function is the following bound on the variance.
\begin{lemma}[\cite{Boucheron2004}]
\label{lugosi}
For a self-bounding function $f$ and independent random variables $X_1,...,X_m$, $m \in\mathbb{N}$
$$\textrm{Var}\left[f(X_1, ..., X_m)\right]\leq \mathbb{E}\left[f(X_1, ..., X_m)\right].$$
\end{lemma}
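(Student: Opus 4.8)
The plan is to obtain this bound as a short consequence of the Efron--Stein inequality, with the two defining conditions of a self-bounding function used precisely to turn the quadratic Efron--Stein terms into a linear bound. Write $Z=f(X_1,\dots,X_m)$ and, for each $i$, let $Z_i=f_i(X_1,\dots,X_{i-1},X_{i+1},\dots,X_m)$ be the leave-one-out function supplied by the definition. First I would invoke the Efron--Stein inequality in its general ``surrogate'' form: for independent $X_1,\dots,X_m$ and any random variable $Z_i$ that is a function of all coordinates except the $i$-th,
$$\textrm{Var}[Z]\leq \sum_{i=1}^m \mathbb{E}\big[(Z-Z_i)^2\big].$$
This is exactly the point where the only real input enters; one either cites it or proves it in a few lines via the martingale/tensorization decomposition of the variance together with the fact that the conditional expectation $\mathbb{E}[Z\mid X_1,\dots,X_{i-1},X_{i+1},\dots,X_m]$ is the $L^2$-minimizer, so substituting any surrogate $Z_i$ can only enlarge the right-hand side. (Since $f$ is non-negative and, in our applications, bounded, square-integrability is automatic.)

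With the inequality in place, the self-bounding structure finishes the argument immediately. The first defining condition gives $0\leq Z-Z_i\leq 1$ pointwise, and for any real $a\in[0,1]$ one has $a^2\leq a$; applying this with $a=Z-Z_i$ yields $(Z-Z_i)^2\leq Z-Z_i$. Summing over $i$ and taking expectations,
$$\textrm{Var}[Z]\leq \sum_{i=1}^m \mathbb{E}\big[(Z-Z_i)^2\big]\leq \sum_{i=1}^m \mathbb{E}\big[Z-Z_i\big]=\mathbb{E}\Big[\textstyle\sum_{i=1}^m (Z-Z_i)\Big].$$
The second defining condition, $\sum_{i}(Z-Z_i)\leq Z$, then bounds this last expectation by $\mathbb{E}[Z]$, which is precisely the claimed inequality $\textrm{Var}[Z]\leq \mathbb{E}[Z]$.

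The whole proof is therefore two lines once Efron--Stein is granted, and the main point to be careful about is exactly the legitimacy of comparing $Z$ against the combinatorially natural $Z_i=f_i$ rather than against the variance-minimizing conditional expectation. The key observation that removes this obstacle is that Efron--Stein remains an upper bound for \emph{every} leave-one-out surrogate, because the conditional expectation is the orthogonal projection in $L^2$; this is what lets us feed in the $f_i$ guaranteed by the self-bounding definition and immediately exploit the bounds $0\leq f-f_i\leq 1$ and $\sum_i(f-f_i)\leq f$. As this is a standard result (Boucheron, Lugosi and Massart), in the paper it suffices to cite it, but the argument above is the complete self-contained reasoning.
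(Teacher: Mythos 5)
Your proposal is correct: the paper does not prove this lemma at all but simply cites Boucheron--Lugosi--Bousquet, and your argument---Efron--Stein with arbitrary leave-one-out surrogates, then $(Z-Z_i)^2\le Z-Z_i$ from $0\le Z-Z_i\le 1$, then $\sum_i(Z-Z_i)\le Z$---is precisely the standard proof given in that reference. Both the invocation of the surrogate form of Efron--Stein (justified by the $L^2$-projection property of conditional expectation) and the two-line conclusion are sound, so there is nothing to correct.
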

\begin{lemma}\label{appllugsi} 
 Let $G$ be a graph, $t\in \mathbb{N}$, and let $I_t=I_t^{({\cal P})}(G)$ for  ${\cal P} \in \{push, pull, pp\}$. Then, conditional on~$I_t$, there exist $m\in\N$, independent random variables $X_1, ...,X_m$ and a self-bounding function $f=f^{({\cal P})}$ such that 
$|I_{t+1}| = f(X_1,...,X_m).$
\end{lemma}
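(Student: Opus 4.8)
The plan is to exhibit, for each protocol ${\cal P}$ and conditional on $I_t$, a concrete family of independent random variables together with a function recording $|I_{t+1}|$, and then to verify the two defining inequalities of a self-bounding function by a ``unique responsible actor'' argument.

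First I would fix the randomness. Conditional on $I_t$, the only source of randomness in one round is, for each vertex that takes an action, the uniformly chosen neighbour together with the independent transmission success. Accordingly I set $X_v=(C_v,S_v)$, where $C_v\in N(v)$ is the chosen neighbour and $S_v\in\{0,1\}$ indicates whether the transmission succeeds (so $P[S_v=1]=q$), and I index the family by the vertices that actually act: by $I_t$ for \push (only informed vertices push), by $U_t$ for \pull (only uninformed vertices pull), and by the whole vertex set $V$ for \pushpull (every vertex chooses). By the definition of the protocols these variables are mutually independent given $I_t$, giving $m=|I_t|$, $m=|U_t|$ and $m=n$ respectively, and $|I_{t+1}|=f(X_{v_1},\dots,X_{v_m})$ is then a deterministic function of them.

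Next I would define the companion functions $f_v$ as the number of informed vertices obtained if vertex $v$ performs no action while all other choices are kept unchanged. The key structural observation, to be checked for each protocol, is that a single action creates at most one newly informed vertex: an informed $v$ pushes to exactly one target, while an uninformed $v$ can only inform itself (its choice never alters the status of the contacted vertex, which is already informed in the only relevant case). Since informing more vertices in a round never prevents another vertex from becoming informed -- all transmissions are resolved against the fixed set $I_t$, so there is no chaining within a round -- the map from the set of successful actions to $I_{t+1}$ is monotone; hence dropping one action can only shrink $I_{t+1}$, and by at most one vertex. This yields $0\le f-f_v\le 1$.

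Finally, for the summation bound I would read $f-f_v=1$ as the event that $v$ is the \emph{unique} successful action informing some vertex $w(v)$: that is, $w(v)$ is newly informed and every other potential informer of $w(v)$ (its own pull and the pushes of its informed neighbours) fails. For a fixed newly informed $w$, at most one of the actions that can inform $w$ can be critical in this sense, so the assignment $v\mapsto w(v)$ is injective on the set of indices with $f-f_v=1$. Consequently $\sum_v(f-f_v)$ equals the number of newly informed vertices that have a unique successful informer, which is at most $|I_{t+1}|-|I_t|\le |I_{t+1}|=f$, establishing both self-bounding inequalities. I expect the only genuine obstacle to be the \pushpull case: there a vertex may act either as a pusher or as a pullee depending on $I_t$, and a newly informed vertex can have several competing informers, so the ``at most one affected vertex per action'' and ``at most one critical action per informed vertex'' claims must be argued with some care. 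The \push and \pull cases are then immediate specializations; in fact for \pull the function is just a constant plus a sum of independent indicators, which is self-bounding by the classical example.
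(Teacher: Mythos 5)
You are correct, and your proof takes essentially the same route as the paper's: one independent random variable per acting vertex ($I_t$ for \push, $U_t$ for \pull, all of $V$ for \pushpull), leave-one-out functions $f_v$, the fact that a single action can inform at most one vertex (giving $0\le f-f_v\le 1$), and an injective assignment of critical actions to newly informed vertices (giving $\sum_v (f-f_v)\le f$), which the paper implements concretely via explicit indicator sums with an ordering-based tie-break rather than your abstract monotonicity/injectivity phrasing. One small point in your favour: you carry the transmission-success indicators $S_v$ explicitly as part of the coordinates, whereas the paper's write-up defines $X_i$ only as the chosen neighbour and leaves the $q<1$ mechanism implicit.
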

\begin{proof}
We will prove in detail the result for \push, and then we show what needs to be modified in order to obtain the statement in the case of \pull and \pushpull. Let $I_t=I_t^{(push)}$, $n\in \mathbb{N}$ be number of vertices of $G$ and $f:[n]^{|I_t|}\to \mathbb{R}$ with
$$(x_1, ..., x_{|I_t|}) \mapsto
 |I_t|+\sum\limits_{1\le k \le |I_t|} \mathbb{1}[x_{k}\in U_t]\mathbb{1}[\forall~\ell <k:x_{k}\neq x_{\ell}].$$
Moreover, let $(X_i)_{1 \le i \le |I_t|}$ be independent random variables, where $X_i$ is a uniformly random neighbour of the $i$th vertex -- according to an arbitrary ordering -- in $I_t$. We argue that $f(X_1, \dots, X_{|I_t|})=|I_{t+1}|$. Consider $v\in I_t$, then $v$ is counted by the $|I_t|$ term in $f$. For $v\in I_{t+1}\backslash I_t$ let $v_1, \dots ,v_s\in I_t, s\in \mathbb{N}$ be the informed vertices with random neighbour $v$ in round $t$, i.e. $X_{v_1}=\dots =X_{v_s}=v$ and $X_u \neq v$ for all other $u \in I_t$. Assume further that $v_1< v_2 <\dots < v_s$. For $k=v_1$ the term  $ \mathbb{1}[X_{k}\in U_t]\mathbb{1}[\forall~\ell <k:x_{k}\neq x_{\ell}]=1$ as $X_{v_1}=v\in U_t$ and for all $i\leq v_1$ it holds that $X_i\neq X_{v_i}$. For $k=v_r,~2\leq r\leq s$ the term $ \mathbb{1}[\forall~\ell <k:x_{k}\neq x_{\ell}]=0$ as $v_1<v_r$ and $X_{v_1}=X_{v_r}=v$. Thus every vertex $v\in I_{t+1}\backslash I_t$ is counted exactly once  by $f$.  Set further  
\begin{equation*}
f_{i}(x_1, ...,x_{i-1},x_{i+1},..., x_{|I_t|}) = |I_t|+\sum\limits_{k=1, k\neq i}^{|I_t|} \mathbb{1}[x_{k}\in U_t]\mathbb{1}[\forall~j <k,j\neq i:x_{j}\neq x_{k}], 
\quad 1\leq i\leq |I_t|.
\end{equation*}
The function $f_i$ arises from $f$ by leaving the $i$th variable out of consideration, i.e., the push of the $i$th vertex has no effect. Then by definition $f - f_i \in \{0,1\}$ for all $1 \le i \le |I_t|$, and actually we have
\begin{equation*}
\begin{aligned}
f-f_{i} = \mathbb{1}[x_{i}\in U_t]\mathbb{1}[\forall~ j\neq i:x_{i}\neq x_{j}].
\end{aligned}
\end{equation*}
This quantity is precisely the difference in informed vertices after round $t$, assuming the $i$th vertex did not push.
Furthermore 
\begin{align*}\sum\limits_{1 \leq i \leq |I_t|} (f-f_{i}) \leq 
\sum\limits_{1 \leq i \leq |I_t|}\mathbb{1}[x_{i}\in U_t]\mathbb{1}[\forall~ j\neq i:x_{i}\neq x_{j}] \leq f.
\end{align*}
Thus $f$ has the self-bounding property, which establishes the claim in the case of  \push. The proof for \pull is completely analogous, where we use 
\begin{align*}
f^{(pull)}:[n]^{|U_t|}&\to \mathbb{R}, ~~
(x_1, ..., x_{|U_t|}) \mapsto |I_t|+\sum\limits_{k \in U_t} \mathbb{1}[x_{k}\in I_t]
\end{align*}
and, similarly, for \pushpull we use $f^{(pp)}:[n]^{n} \to \mathbb{R}$ with
\begin{align*}
(x_1, ..., x_n) &\mapsto |I_t|+\sum\limits_{1 \leq k \leq n} \mathbb{1}[k\in I_t]\mathbb{1}[x_{k}\in U_t]\mathbb{1}[\forall ~ j\in \{1,\dots, k\}\cap I_t:x_{k}\neq x_{j}]\\&~~~~~+\sum\limits_{1 \leq k \leq n} \mathbb{1}[k\in U_t]\mathbb{1}[x_{k}\in I_t]\mathbb{1}[\forall ~w \in I_t: x_{w}\neq k].
\end{align*}
Here it is useful to see that the two sums in $f^{(pp)}$ are complementary, i.e.~that only one of the summands for index $k$ can be 1. Thus the functions $f^{(pull)}_i$ and $f_i^{(pp)}$ are obtained analogously to the push case.
\end{proof}
\begin{remark}
Let $G=(V,E)$ be a graph. Lemma \ref{appllugsi} also applies to subsets of $I_{t+1}$, i.e for any $U\subset V$ and conditioned on $I_t$ we have that $|I_{t+1}\cap U|$ and $|(I_{t+1}\cap U) \setminus I_t|$ are self-bounding.
\end{remark}
The following lemma gives a tool   that we will use in order to extend our round-wise analysis  to longer phases. 
\begin{proposition}\label{concentration}
Let ${\cal P}\in\{push,pull,pp\}, I_t=I_t^{(\cal P)}$ and $t_1\geq t_0\geq 1$ such that $|I_{t_0}|\geq \sqrt{\log n}$. Let further $(\mathcal{A}_i)_{i\in \mathbb{N}}$ be a sequence of events, $c>1$, and $\delta>0$ such  that
$$P_{t_0}[\mathcal{A}_{t}\mid \mathcal{A}_{t_0}, \dots, \mathcal{A}_{t-1}]\geq 1-\delta\left(c^{t-t_0}  |I_{t_0}|\right)^{-1/3} \quad  \text{for all } t_0\le t\le t_1.$$
Then
$$P_{t_0}\left[\bigcap_{t=t_0}^{t_1}\mathcal{A}_t\right]\geq 1-O(|I_{t_0}|^{-1/3})$$
\end{proposition}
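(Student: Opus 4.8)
The plan is to exploit the multiplicative structure of the intersection through the chain rule for conditional probabilities, and then to transform the resulting product of conditional success probabilities into a geometrically decaying sum of failure probabilities whose total does not depend on the length of the phase.

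First I would write, using the chain rule conditioned on $I_{t_0}$ throughout,
$$P_{t_0}\left[\bigcap_{t=t_0}^{t_1}\mathcal{A}_t\right] = \prod_{t=t_0}^{t_1} P_{t_0}\!\left[\mathcal{A}_t \mid \mathcal{A}_{t_0},\dots,\mathcal{A}_{t-1}\right],$$
where the factor for $t=t_0$ is interpreted as $P_{t_0}[\mathcal{A}_{t_0}]$ (empty conditioning). The hypothesis then bounds each factor from below by $1-p_t$ with $p_t := \delta\left(c^{t-t_0}|I_{t_0}|\right)^{-1/3}$.

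Next I would pass from the product to a sum via the Weierstrass-type inequality $\prod_{t}(1-p_t)\ge 1-\sum_t p_t$, which holds whenever all $p_t\in[0,1]$. Since $p_t\le p_{t_0}=\delta|I_{t_0}|^{-1/3}$ and $|I_{t_0}|\ge\sqrt{\log n}\to\infty$, for large $n$ all $p_t$ lie in $[0,1]$ and the inequality applies; in the degenerate regime where $|I_{t_0}|$ is so small that $p_{t_0}\ge 1$, the asserted bound $1-O(|I_{t_0}|^{-1/3})$ is non-positive and hence vacuous, so it may be ignored. This yields $P_{t_0}\big[\bigcap_t\mathcal{A}_t\big]\ge 1-\sum_{t=t_0}^{t_1}p_t$. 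Finally, substituting $j=t-t_0$ and bounding the partial sum by the full geometric series,
$$\sum_{t=t_0}^{t_1}p_t = \delta|I_{t_0}|^{-1/3}\sum_{j=0}^{t_1-t_0}c^{-j/3} \le \frac{\delta}{1-c^{-1/3}}\,|I_{t_0}|^{-1/3},$$
where convergence holds because $c>1$ forces $c^{-1/3}<1$. As $\delta$ and $c$ are constants, the right-hand side is $O(|I_{t_0}|^{-1/3})$, which gives the claim.

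I do not expect a genuine obstacle here; the only point worth isolating is the geometric decay, which is exactly where the strength of the statement resides. The content is that intersecting over a phase of arbitrary (and possibly growing) length $t_1-t_0$ does not weaken the bound: because the per-round failure probabilities shrink by the fixed factor $c^{-1/3}$ each round, the total failure probability is dominated by its first term and stays $O(|I_{t_0}|^{-1/3})$ independently of $t_1-t_0$. The hypothesis $|I_{t_0}|\ge\sqrt{\log n}$ plays no role beyond guaranteeing $|I_{t_0}|^{-1/3}=o(1)$, so that the conclusion is a genuine whp statement.
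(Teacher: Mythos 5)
Your proof is correct and follows essentially the same route as the paper's: chain rule for conditional probabilities, the Weierstrass inequality $\prod_t(1-p_t)\ge 1-\sum_t p_t$, and summing the geometric series $\sum_j c^{-j/3}$ using $c>1$. Your explicit check that the $p_t$ lie in $[0,1]$ (with the vacuous-bound fallback) is a small point the paper glosses over, but otherwise the two arguments coincide.
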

\begin{proof}
Using the definition of conditional probability we obtain, as $c>1$,
\begin{align*}
P_{t_0}\left[\bigcap_{t=t_0}^{t_1}\mathcal{A}_t\right]
&=\prod_{t=t_0}^{t_1}P_{t_0}\left[\mathcal{A}_t \mid\mathcal{A}_{t_0}, \dots, \mathcal{A}_{t-1} \right]\geq  \prod_{t=t_0}^{t_1} \left(1-(c^{t-t_0}\delta  |I_{t_0}|)^{-1/3}\right)\\&\geq 1-\sum_{t=t_0}^{t_1} \left(c^{t-t_0}\delta  |I_{t_0}|\right)^{-1/3}=1-|I_{t_0}|^{-1/3}\sum_{t=0}^{t_1-t_0} \delta ^{-1/3} c^{-t/3}= 1-O(|I_{t_0}|^{-1/3}).
\end{align*} 
\end{proof}
We give two typical example applications of this lemma below.
The first example addresses the case where we have a lower bound for the expected number of informed vertices after one round.
\begin{example}\label{concetrationRemark}~
Let $\mathcal{P}\in \{push,pull, pp\}, I_t=I_t^{(\cal P)}$. Assume that there is some $c>1$ such that $\mathbb{E}_t\left[\left|I_{t+1}\right|\right]\geq c\left|I_t\right|$ for all $t$ as long as ${n/f(n)}\leq|I_t|\leq n/g(n)$ for some functions $1\le f,g \le n, f=o(n)$. Let $t_0$ be such that $|I_{t_0}|\geq {n/f(n)}$. Then according to Lemma \ref{lugosiappl} we have that $\text{Var}_t\left[\left|I_{t+1}\right|\right]\leq \mathbb{E}_t\left[\left|I_{t+1}\right|\right]$ and applying Chebychev's inequality  gives
\begin{equation}\label{cheby}
P_t\left[\big||I_{t+1}|-\mathbb{E}_t\left[|I_{t+1}|\right]\big|\leq \mathbb{E}_t\left[|I_{t+1}|\right]^{2/3}\right]\geq 1-\mathbb{E}_t\left[|I_{t+1}|\right]^{-1/3}\geq 1-|I_t|^{-1/3}.
\end{equation}
Consider the events
\[
	\mathcal{A}_t = \text{``}|I_{t}| \ge \mathbb{E}_{t-1}\left[|I_{t}|\right]-  \mathbb{E}_{t-1}\left[|I_{t}|\right]^{2/3} ~~\text{or}~~ |I_t| \ge n/g(n)\text{''}
\]
The intersection of $\mathcal{A}_{t_0+1}, \dots, \mathcal{A}_{t}$ implies inductively that either $|I_{t}| \ge n/g(n)$ or
$$|I_{t}|\geq \left(1-\mathbb{E}_{t-1}[|I_{t}|]^{-1/3}\right) \mathbb{E}_{t-1}[|I_{t}|] \ge \left(1-(c|I_{t-1}|)^{-1/3}\right) c|I_{t-1}|\geq \left(\big(1-(c|I_{t_0}|)^{-1/3}\big) c\right)^{t-t_0}|I_{t_0}|.$$
We obtain with~\eqref{cheby}
\begin{align*}
P_{t_0}[\mathcal{A}_{t+1}\mid \mathcal{A}_{t_0+1}, \dots, \mathcal{A}_{t}, |I_t| < n/g(n)]
&\geq 1- \left(\big(1-(c|I_{t_0}|)^{-1/3}\big) c\right)^{-(t-t_0)/3}|I_{t_0}|^{-1/3},
\end{align*}
and otherwise $P_{t_0}[\mathcal{A}_{t+1}\mid \mathcal{A}_{t_0+1}, \dots, \mathcal{A}_{t}, |I_t| \ge n/g(n)] = 1$.
Choose $\tau := t - t_0 = \log_c(f(n)/g(n)) + o(\log n)$ as small as possible such that this lower bound for $|I_{t+1}|$ is $\ge n/g(n)$, that is, this lower bound is $< n/g(n)$ for $t=t_0+\tau$. 
Combining the two conditional probabilities we obtain for all $t_0 \le t \le t_0 + \tau$ 
\begin{equation*}
P_{t_0}[\mathcal{A}_{t+1}\mid \mathcal{A}_{t_0+1}, \dots, \mathcal{A}_{t}]
\geq  1-  \left(\big(1-(c|I_{t_0}|)^{-1/3}\big) c\right)^{-(t-t_0)/3}|I_{t_0}|^{-1/3}.
\end{equation*}
Applying Proposition~\ref{concentration} then yields whp
\[
	|I_{t_0 + \tau+1}| \ge n/g(n).
\]
\end{example}
In the second example we make the stronger assumption that we can determine asymptotically the expected number of informed vertices after one round. Here we assume that we begin with a ``small'' set of informed vertices, say of size $\sqrt{\log n}$, and want to reach a set of size nearly linear in $n$.
\begin{example}\label{concetrationExample}
Assume that there is some $c>1$ such that $\mathbb{E}_t\left[\left|I_{t+1}\right|\right]= (1+o(1))c\left|I_t\right|$ for all $t$ as long as
 $\sqrt{\log n}\leq|I_t|\leq  n/ {\log n}$.
Let $\mathcal{A}_t$ be the event ``$\left||I_{t}|-\mathbb{E}_{t-1}\left[|I_{t}|\right]\right|\leq \mathbb{E}_{t-1}\left[|I_{t}|\right]^{2/3}$'' and let $t_0$ be such that $|I_{t_0}|\geq \sqrt{\log n}$. There is $h(n)\in o(1)$ such that for $c^-:= (1-h(n))c$ and $c^+:=(1+h(n))c$ we have that $\mathbb{E}_t\left[\left|I_{t+1}\right|\right]\leq c^+\left|I_t\right|$ and $\mathbb{E}_t\left[\left|I_{t+1}\right|\right]\geq c^-\left|I_t\right|$. Using this notation, the events $\mathcal{A}_{t_0+1}, \dots, \mathcal{A}_{t+1}$ imply together inductively that
$$|I_{t+1}|\leq \left(1+\mathbb{E}_t[|I_{t+1}|]^{-1/3}\right) \mathbb{E}_t[|I_{t+1}|] \le \left(1+(c^-|I_t|)^{-1/3}\right) c^+|I_t|\leq \left(\big(1+(c^-|I_{t_0}|)^{-1/3}\big) c^+\right)^{t-t_0}|I_{t_0}|$$
for all $t$ such that the right-hand side is bounded by $n/\log n$. Moreover, for all such $t$
$$|I_{t+1}|\geq \left(1-\mathbb{E}_t[|I_{t+1}|]^{-1/3}\right) \mathbb{E}_t[|I_{t+1}|] \ge \left(1-(c^-|I_t|)^{-1/3}\right) c^-|I_t|\geq \left(\big(1-(c^-|I_{t_0}|)^{-1/3}\big) c^-\right)^{t-t_0}|I_{t_0}|.$$ 
Thus, as $\mathcal{A}_{t}$ only depends on $I_t$ it follows with \eqref{cheby}
\begin{align*}
P_{t_0}[\mathcal{A}_{t+1}\mid \mathcal{A}_{t_0+1}, \dots, \mathcal{A}_{t}]
&\geq 1- \left(\big(1-(c^-|I_{t_0}|)^{-1/3}\big) c^-\right)^{-(t-t_0)/3}|I_{t_0}|^{-1/3}.
\end{align*}
Applying Proposition~\ref{concentration} then immediately gives that there is $\tau_1= \log_c (n/|I_{t_0}|) + o(\log n)$  such that whp
$ |I_{t_0+\tau_1}|\leq n/\log n.$
Example \ref{concetrationRemark}, setting $f=n/\sqrt{\log n}$ and $g=\log n$, gives an additional $\tau_2= \log_c (n/|I_{t_0}|) + o(\log n)$ such that $|\tau_1 - \tau_2| = o(\log n)$ and whp
$$|I_{t_0+\tau_1}|\leq \frac{n}{\log n}\leq |I_{t_0+\tau_2}|.$$
\end{example}
\paragraph{Expander Sequences.} In this section we collect some important properties of expander sequences that we are going to use later. We start by stating a version of the well-known expander mixing lemma applied to our setting of expander sequences.
\begin{lemma}[{\cite[Cor.~2.4]{Panagiotou2015}}]
\label{expmix}
Let $\mathcal G=(G_n)_{n \in \N}=((V_n,E_n))_{n \in \N}$ be an expander sequence. Then for $S_n\subseteq V_n$ such that $1 \leq |S_n| \leq n/2$ it is
\begin{align*}
\bigg |e(S_n,V_n\backslash S_n) - \frac{\Delta_n |S_n| (n- |S_n|)}{n}\bigg | = o(\Delta_n) |S_n|.
\end{align*}
\end{lemma}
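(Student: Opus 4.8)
The plan is to translate everything into the language of the adjacency matrix $A$ of $G_n$ and its spectrum, and to reduce the two-sided estimate to a single quantity, namely the number of \emph{internal} edges $e(S_n)$. Writing $s=|S_n|$, $\mathrm{vol}(S_n)=\sum_{v\in S_n}d(v)$ and using $e(S_n,V_n\setminus S_n)=\mathrm{vol}(S_n)-2e(S_n)$, the near-regularity $\Delta_n/\delta_n=1+o(1)$ immediately gives $\mathrm{vol}(S_n)=\Delta_n s-\sum_{v\in S_n}(\Delta_n-d(v))=\Delta_n s+o(\Delta_n)s$, since each summand lies in $[0,\Delta_n-\delta_n]=[0,o(\Delta_n)]$. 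Because $\tfrac{\Delta_n s(n-s)}{n}=\Delta_n s-\tfrac{\Delta_n s^2}{n}$, the claim is therefore equivalent to showing $e(S_n)=\tfrac{\Delta_n s^2}{2n}+o(\Delta_n)s$, i.e. to controlling the quadratic form $\mathbf 1_{S_n}^{\top}A\,\mathbf 1_{S_n}=2e(S_n)$, where $\mathbf 1_{S_n}$ is the indicator vector of $S_n$. The point of passing to $e(S_n)$ (rather than estimating $\mathbf 1_{S_n}^{\top}A\,\mathbf 1_{V_n\setminus S_n}$ directly) is that both cross terms below will then carry a helpful factor $s/n$.

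Next I would split $\mathbf 1_{S_n}=\tfrac sn\mathbf 1+\mathbf x$ with $\mathbf x\perp\mathbf 1$ and $\|\mathbf x\|^2=\tfrac{s(n-s)}{n}\le s$, and expand
\[
\mathbf 1_{S_n}^{\top}A\,\mathbf 1_{S_n}=\frac{s^2}{n^2}\,\mathbf 1^{\top}A\mathbf 1+\frac{2s}{n}\,\mathbf 1^{\top}A\mathbf x+\mathbf x^{\top}A\mathbf x .
\]
The first term equals $\tfrac{s^2}{n}\bar d$, where $\bar d=\mathrm{vol}(V_n)/n\in[\delta_n,\Delta_n]$ is the average degree, hence it equals $\tfrac{\Delta_n s^2}{n}+o(\Delta_n)s$ (as $\tfrac{s^2}{n}\le s$ and $\Delta_n-\bar d=o(\Delta_n)$). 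For the cross term, write the degree vector $\mathbf d=A\mathbf 1=\Delta_n\mathbf 1-\mathbf r$ with $\mathbf r\ge0$, $\|\mathbf r\|_\infty\le\Delta_n-\delta_n$; since $\mathbf x\perp\mathbf 1$ one gets $\mathbf 1^{\top}A\mathbf x=-\mathbf r^{\top}\mathbf x$, and Cauchy--Schwarz bounds $|\mathbf r^{\top}\mathbf x|\le\sqrt n(\Delta_n-\delta_n)\sqrt s$, so $\tfrac{2s}{n}|\mathbf 1^{\top}A\mathbf x|\le 2(\Delta_n-\delta_n)s\sqrt{s/n}=o(\Delta_n)s$. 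For the last term I would use the eigendecomposition $A=\sum_i\mu_i\mathbf v_i\mathbf v_i^{\top}$ with $\mathbf v_1$ the (nonnegative) Perron vector, splitting $\mathbf x^{\top}A\mathbf x=\mu_1\langle\mathbf x,\mathbf v_1\rangle^2+\sum_{i\ge2}\mu_i\langle\mathbf x,\mathbf v_i\rangle^2$; the tail is at most $\lambda_n\|\mathbf x\|^2\le\lambda_n s=o(\Delta_n)s$ by the definition of $\lambda_n$.

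The main obstacle is the surviving Perron contribution $\mu_1\langle\mathbf x,\mathbf v_1\rangle^2$, which requires showing that the principal eigenvector is close to uniform. Writing $\mathbf v_1=a\tfrac{\mathbf 1}{\sqrt n}+\mathbf w$ with $a=\langle\mathbf v_1,\tfrac{\mathbf 1}{\sqrt n}\rangle$ and $\mathbf w\perp\mathbf 1$, so $\|\mathbf w\|^2=1-a^2$, I would establish $1-a^2=o(1)$ by a Rayleigh-quotient sandwich: decomposing $\tfrac{\mathbf 1}{\sqrt n}=a\mathbf v_1+\mathbf u$ with $\mathbf u\perp\mathbf v_1$ gives $\bar d=\langle A\tfrac{\mathbf 1}{\sqrt n},\tfrac{\mathbf 1}{\sqrt n}\rangle=a^2\mu_1+\langle A\mathbf u,\mathbf u\rangle$, and since $|\langle A\mathbf u,\mathbf u\rangle|\le\lambda_n\|\mathbf u\|^2\le\lambda_n$ while $\bar d\ge\delta_n$ and $\mu_1\le\Delta_n$, we obtain $a^2\ge(\delta_n-\lambda_n)/\Delta_n=1-o(1)$, using both $\delta_n/\Delta_n=1-o(1)$ and $\lambda_n/\Delta_n=o(1)$. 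A short computation then shows $\langle\mathbf x,\mathbf v_1\rangle=\langle\mathbf 1_{S_n},\mathbf w\rangle$, whence $|\langle\mathbf x,\mathbf v_1\rangle|\le\sqrt s\,\|\mathbf w\|$ and $\mu_1\langle\mathbf x,\mathbf v_1\rangle^2\le\Delta_n\,s\,\|\mathbf w\|^2=o(\Delta_n)s$. Collecting the four estimates yields $2e(S_n)=\tfrac{\Delta_n s^2}{n}+o(\Delta_n)s$, and combining with the expression for $\mathrm{vol}(S_n)$ gives the claim. I would finally remark that all error terms depend only on the graph parameters $\Delta_n-\delta_n$, $\lambda_n$ and $1-a^2$ (each $o(\Delta_n)$ relative to $\Delta_n$, respectively $o(1)$) together with the uniform factors $\tfrac{s(n-s)}{n}\le s$ and $\sqrt{s/n}\le1$, so the bound $o(\Delta_n)s$ holds uniformly over all admissible $S_n$.
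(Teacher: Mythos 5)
Your proof is correct. There is no argument in the paper to compare it against line by line: the lemma is imported verbatim from Corollary~2.4 of the cited reference \cite{Panagiotou2015} with no proof given, so your spectral argument is a self-contained substitute for that citation. It correctly isolates the two genuine difficulties that distinguish this statement from the textbook mixing lemma for regular graphs. First, degree irregularity: you push it into $\mathrm{vol}(S_n)$ and into the deviation vector $\mathbf r=\Delta_n\mathbf 1-A\mathbf 1$ with $\|\mathbf r\|_\infty\le\Delta_n-\delta_n=o(\Delta_n)$, and the orthogonality $\mathbf x\perp\mathbf 1$ removes the leading part of the cross term. Second, the Perron eigenvector of an irregular graph is not $\mathbf 1/\sqrt n$, so the contribution $\mu_1\langle\mathbf x,\mathbf v_1\rangle^2$ does not vanish; your Rayleigh-quotient bound $a^2\mu_1\ge\bar d-\lambda_n$, hence $1-a^2\le(\Delta_n-\delta_n+\lambda_n)/\Delta_n=o(1)$, combined with the identity $\langle\mathbf x,\mathbf v_1\rangle=\langle\mathbf 1_{S_n},\mathbf w\rangle$, disposes of it. Two features are worth highlighting as the reason you obtain the right error bound: (i) the reduction to internal edges via $e(S_n,V_n\setminus S_n)=\mathrm{vol}(S_n)-2e(S_n)$ makes every error term in the expansion of $\mathbf 1_{S_n}^{\top}A\mathbf 1_{S_n}$ carry a factor at most $s$; in particular you avoid the crude bound $\lambda_n\sqrt{|S_n|\,|V_n\setminus S_n|}\approx\lambda_n\sqrt{sn}$, which would \emph{not} be $o(\Delta_n)s$ for small $s$ and would ruin the conclusion; and (ii) all error terms are controlled by explicit multiples of $(\Delta_n-\delta_n+\lambda_n)s$, so the $o(\Delta_n)|S_n|$ bound is uniform over the choice of the sets $S_n$, which is what the lemma, quantified over arbitrary sequences $(S_n)_{n\in\N}$, actually requires.
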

The following result is a consequence of the Expander Mixing Lemma that applies to graphs in which some edges were removed. It seems very simple but it turns out to be surprisingly useful.
\begin{lemma}
\label{linearFraction}
Let $\mathcal G=(G_n)_{n \in \N}=((V_n,E_n))_{n \in \N}$ be an expander sequence. Let $\varepsilon>0$ and set $\tilde{\mathcal G}=(\tilde G_n)_{n\in \N}$, where each $\tilde G_n$ it is obtained from $G_n$ by deleting edges such that each vertex keeps at least a $(1/2 + \varepsilon)$ fraction of its edges. For each $n \in \N$ let further $S_n\subseteq V_n$, then there is $n_0 \in \N$  such that for all $n \geq n_0$
\begin{align*}
e_{\tilde G_n}(S_n,V_n\backslash S_n)\geq \varepsilon e_{G_n}(S_n,V_n \backslash S_n).
\end{align*}
\end{lemma}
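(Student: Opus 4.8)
The plan is to first reduce to the case $1 \le |S_n| \le n/2$ and then combine an elementary degree count on $S_n$ with the Expander Mixing Lemma (Lemma~\ref{expmix}). The reduction is free: both the quantity $e(S_n,V_n\setminus S_n)$ and the edge-retention hypothesis are invariant under replacing $S_n$ by $V_n\setminus S_n$, and the claimed inequality is trivial when $e_{G_n}(S_n,V_n\setminus S_n)=0$. So I may assume $1\le |S_n|\le n/2$ and apply Lemma~\ref{expmix} to $S_n$.

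The first key step is a degree-counting argument. Writing $s=|S_n|$, $c=e_{G_n}(S_n,V_n\setminus S_n)$ and $i=e_{G_n}(S_n)$, the retention hypothesis gives
\[
\sum_{v\in S_n} d_{\tilde G_n}(v)\ \ge\ (1/2+\varepsilon)\sum_{v\in S_n} d_{G_n}(v)\ =\ (1/2+\varepsilon)(2i+c),
\]
using $\sum_{v\in S_n} d_{G_n}(v)=2i+c$. The left-hand side equals $2e_{\tilde G_n}(S_n)+e_{\tilde G_n}(S_n,V_n\setminus S_n)$, and since edges are only deleted we have $e_{\tilde G_n}(S_n)\le i$. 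Combining these yields
\[
e_{\tilde G_n}(S_n,V_n\setminus S_n)\ \ge\ (1/2+\varepsilon)c-(1-2\varepsilon)i .
\]
Hence it suffices to prove $(1/2)c\ge (1-2\varepsilon)i$, equivalently $c\ge 2(1-2\varepsilon)i$, for all large $n$.

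At this point pure edge-counting is not enough: if $S_n$ were internally very dense then $i$ could be of order $\Delta_n s$ and swamp $c$. This is exactly where expansion must enter, to guarantee that a set of size at most $n/2$ cannot be too internally dense. Concretely, Lemma~\ref{expmix} gives $c=\tfrac{\Delta_n s(n-s)}{n}+o(\Delta_n)s$, so with $\alpha:=s/n\le 1/2$ we get the lower bound $c\ge \Delta_n s\,(1-\alpha-o(1))$. On the other hand, since every degree is at most $\Delta_n$, $2i+c=\sum_{v\in S_n}d_{G_n}(v)\le \Delta_n s$, whence $2i\le \Delta_n s-c\le \Delta_n s\,(\alpha+o(1))$. (Only $d(v)\le\Delta_n$ is used here; the regularity $\Delta_n/\delta_n=1+o(1)$ is not needed for this bound.)

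Finally I would substitute these into the target inequality $c\ge 2(1-2\varepsilon)i$. Using the two estimates and cancelling the common factor $\Delta_n s$, it reduces to $1-\alpha\ge (1-2\varepsilon)\alpha+o(1)$, i.e.\ $\alpha\le \tfrac{1}{2(1-\varepsilon)}-o(1)$. Since $\alpha\le 1/2$ and
\[
\frac{1}{2(1-\varepsilon)}-\frac12=\frac{\varepsilon}{2(1-\varepsilon)}>0
\]
is a positive constant independent of $n$, the $o(1)$ error terms from Lemma~\ref{expmix} are eventually dominated; choosing $n_0$ so that these errors fall below $\tfrac{\varepsilon}{2(1-\varepsilon)}$ establishes the inequality, and thus $e_{\tilde G_n}(S_n,V_n\setminus S_n)\ge\varepsilon c$, for all $n\ge n_0$. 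The main obstacle is conceptual rather than computational: recognizing that edge-counting alone fails and that Lemma~\ref{expmix} is precisely what bounds the internal density of $S_n$; the remaining work is routine bookkeeping with the uniform $o(\Delta_n)$ error term.
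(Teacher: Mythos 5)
Your proposal is correct and follows essentially the paper's route: the same reduction to $1\le|S_n|\le n/2$, an elementary count of how much the deletions can destroy, and Lemma~\ref{expmix} supplying the decisive fact that the crossing edges account for at least roughly half of all edges incident to $S_n$, leaving the constant gap (of order $\varepsilon$) that absorbs the $o(1)$ error terms. The only difference is bookkeeping: the paper subtracts the maximal possible number of deleted edges, $(1/2-\varepsilon)\Delta_n|S_n|$, directly from $e_{G_n}(S_n,V_n\setminus S_n)$, whereas you route the count through the degree-sum identity and the internal edge count $e_{G_n}(S_n)$, reducing the claim to $e_{G_n}(S_n,V_n\setminus S_n)\ge 2(1-2\varepsilon)\,e_{G_n}(S_n)$ and invoking Lemma~\ref{expmix} once more to bound the internal edges --- a mild refinement of the same argument.
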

\begin{proof}
Without loss of generality we assume that $|S_n|\leq n/2$. Since at most $(1/2-\varepsilon)\Delta_n$ edges are deleted at each vertex, we immediately obtain that 
\begin{align*}
 e_{\tilde G_n}(S_n,V_n \backslash S_n)\geq e_{G_n}(S_n,V_n \backslash S_n) - \Delta_n \left(1/2 - \varepsilon\right)|S_n|.
\end{align*}
Using Lemma \ref{expmix} and choosing $n_0$ large enough such that $\frac{o(\Delta_n)}{\Delta_n} \frac{n}{n-|S_n|}<\varepsilon$ for all $n \ge n_0$, we obtain that
\begin{align*}
(1-\varepsilon) e_{G_n}(S_n,V_N \backslash S_n) - \Delta_n \left(1/2 - \varepsilon\right)|S_n|
~&\geq (1-\varepsilon)\frac{\Delta_{n} |S_n|(n-|S_n|)}{n} - o(\Delta_n)|S_n| -  \Delta_n \left(1/2 - \varepsilon\right)|S_n| \\
&= \frac{\Delta_n |S_n|(n-|S_n|)}{n} \left (1-\varepsilon- \frac{o(\Delta_n)}{\Delta_n} \frac{n}{n-|S_n|}-\frac{n(1/2-\varepsilon)}{n-|S_n|}  \right ).
\end{align*}
As $n-|S_n| \geq n/2$ the last expression is $>0$. Hence
\begin{align*}
e_{\tilde G_n}(S_n,V_n \backslash S_n)\geq \varepsilon e_G(S_n,V_n \backslash S_n) +(1-\varepsilon) e_G(S_n,V_n \backslash S_n)- \Delta_n \left(1/2 - \varepsilon\right)|S_n| \geq \varepsilon e_{G_n}(S_n,V_n \backslash S_n).
\end{align*}
\end{proof}
Next we give a lemma that counts the number of paths between two arbitrary vertices of a dense graph satisfying a weak expander property (as for example guaranteed by Lemma \ref{linearFraction}). This will later be used to give a lower bound on the probability of any vertex to be informed after a given constant number of rounds.
\begin{lemma}\label{CountingPaths}
Let $G=(V,E), |V|=n.$ Assume that there is $\alpha>0$ such that $d(v)\geq \alpha n$ for all $v\in V$ and  $e(W,V\backslash W)\geq \alpha |W||V\backslash W|$ for all $W\subseteq V$. Then for all $u,w \in V$ there is $1\le d\leq 8/\alpha^2 +2$ such that there are at least $ (\alpha^4/64)^{d+1} n^{d-1}$ paths of length $d$ from $u$ to $w$. 
\end{lemma}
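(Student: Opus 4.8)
The plan is to grow, layer by layer, the set of vertices reachable from $u$ and (by reversing edges) from $w$ by \emph{many simple paths of one fixed length}, until both families reach linear size, and then to glue them together across a cut that the expansion hypothesis forces to be dense. The central device is to keep the visited vertices partitioned into disjoint layers: since choosing one vertex from each layer and following adjacencies automatically yields a simple path, this lets me count genuine paths (not merely walks) and pin down their length exactly.

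Concretely, I would build forward layers $A_0=\{u\},A_1,A_2,\dots$ as follows. Having defined disjoint layers with union $D_i$, let $A_{i+1}$ be the set of vertices in $V\setminus D_i$ having at least $\theta_i:=\tfrac{\alpha}{4}|A_i|$ neighbours in $A_i$. Disjointness is immediate, and for $x\in A_{i+1}$ the number $\pi_{i+1}(x)$ of length-$(i{+}1)$ simple paths from $u$ to $x$ lying inside $A_0,\dots,A_{i+1}$ satisfies $\pi_{i+1}(x)\ge\theta_i\cdot\min_{y\in A_i}\pi_i(y)$, because each of the $\ge\theta_i$ back-neighbours $y\in A_i$ contributes $\pi_i(y)$ distinct extensions and appending the fresh vertex $x$ keeps every path simple. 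Writing $p_i:=\min_{x\in A_i}\pi_i(x)$ this is the geometric recursion $p_{i+1}\ge\theta_i p_i$, so $p_a\ge\prod_{i=1}^{a-1}\theta_i$.

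The two hypotheses control the layer sizes. While $|D_i|\le n/2$, the cut bound $e(A_i,V\setminus A_i)\ge\alpha|A_i|(n-|A_i|)\ge\tfrac{\alpha}{2}|A_i|n$, together with the facts that vertices carrying fewer than $\theta_i$ edges to $A_i$ absorb at most $\theta_i n$ of these edges and each other vertex absorbs at most $|A_i|$ of them, forces the frontier (after intersecting with the unused region $V\setminus D_i$) to have size $\Omega(\alpha^2 n)$; the minimum-degree hypothesis $\delta\ge\alpha n$ handles the first step, where $|A_1|\ge\alpha n$. Hence the ball gains $\Omega(\alpha^2 n)$ new vertices per step, so $|D_a|>n/2$ after $a=O(1/\alpha^2)$ steps, and the same construction run from $w$ on a complementary region yields backward layers $B_0=\{w\},\dots,B_b$ with $b=O(1/\alpha^2)$ and path counts $\rho_j$. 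Setting $d:=a+b+1\le 8/\alpha^2+2$ fixes the length. Arranging the two layerings to occupy essentially complementary halves of $V$, the edge-expansion hypothesis applied to the cut between them gives at least $\alpha|A_a||B_b|=\Omega(\alpha^5 n^2)$ edges between the final layers $A_a$ and $B_b$; each such edge $(x,y)$ concatenates a forward path to $x$, the edge, and a backward path from $y$ into a simple length-$d$ path from $u$ to $w$. Summing yields at least $\Omega(\alpha^5 n^2)\cdot p_a\cdot\rho_b$ paths, and plugging in $p_a\ge(\Omega(\alpha^3 n))^{a-1}$ with the analogous bound for $\rho_b$ collapses the total to $n^{a+b}=n^{d-1}$ times a constant of the form $\alpha^{\Theta(d)}$, which after bookkeeping is at least $(\alpha^4/64)^{d+1}n^{d-1}$.

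I expect the main obstacle to be precisely the layer-growth step, because there is \emph{no maximum-degree assumption}: a priori all edges leaving the last layer $A_i$ could fall back into the already-used ball $D_i$, so showing that the frontier restricted to the unused region $V\setminus D_i$ (needed to keep layers disjoint) is still linear requires genuine care with the cut expansion, not just the min-degree bound. Tuning the threshold $\theta_i$ — large enough that the path count grows like a power of $n$, small enough that each frontier stays linear — is the delicate trade-off, and it is exactly this balance (frontier $\Omega(\alpha^2 n)$ against back-degree $\Omega(\alpha^3 n)$) that produces both the $O(1/\alpha^2)$ bound on $d$ and the $\alpha^4/64$ base of the exponent. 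A secondary technical point is securing the $\Omega(n^2)$ connecting edges from edge-expansion alone, which is why I would glue the two sides across a real cut, where $e(P,V\setminus P)\ge\alpha|P||V\setminus P|$ applies verbatim, rather than at an uncontrolled intersection of the two balls.
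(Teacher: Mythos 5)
Your construction has a genuine gap at exactly the step you flag as the main obstacle, and the sketch you give for it does not close it. Your counting argument (vertices with fewer than $\theta_i$ edges to $A_i$ absorb at most $\theta_i n$ of the cut edges, every other vertex at most $|A_i|$ of them) only shows that at least $\tfrac{\alpha}{4}n$ vertices of $V\setminus A_i$ have $\ge\theta_i$ edges to the last layer $A_i$; it says nothing about how many of these survive the intersection with $V\setminus D_i$, and nothing in the hypotheses forces even one of them to lie there. Indeed, expansion applied to $A_i$ guarantees $e(A_i,V\setminus A_i)\ge\alpha|A_i|(n-|A_i|)$, but once the union of the \emph{earlier} layers has size at least $\alpha n$ (which happens immediately, since $|A_1|\ge\alpha n$), the set $D_i\setminus A_i$ can absorb all of these edges, so $e(A_i,V\setminus D_i)=0$ is consistent with every assumption and your process stalls with $A_{i+1}=\emptyset$. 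Applying expansion to the whole ball $D_i$ instead does give linearly many edges leaving it, but those edges emanate from earlier layers, and a vertex outside $D_i$ attached to, say, $A_{i-3}$ is useless to you: your scheme counts simple paths of one fixed length per layer, so new vertices must attach to the \emph{last} layer specifically. This is precisely why the paper does not use disjoint fixed-length layers. It grows a nested ball $U_{i+1}=U_i\cup\tilde U_{i+1}$, where $\tilde U_{i+1}$ consists of vertices with at least $\alpha^2 n/8$ edges to the \emph{whole} ball $U_i$; expansion of $U_i$ itself then drives the growth, and the price -- that different vertices are reached by many paths of different lengths $d\le i$ -- is paid by a pigeonhole over the at most $8/\alpha^2$ possible lengths at each extension step, which is where the base $\alpha^4/64$ and the varying $d$ in the statement come from.

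Your gluing step has a second, independent gap. For the concatenated walks to be simple, and for the expansion hypothesis to apply ``verbatim'' to the cut between the two sides, the forward and backward balls must be disjoint; but your forward process only stops once $|D_a|>n/2$, so the backward construction is confined to $V\setminus D_a$, a set of size less than $n/2$, inside which no expansion is available: for $W\subseteq V\setminus D_a$ the hypothesis bounds $e(W,V\setminus W)$, and all of those edges may go into $D_a$. Note also that the hypothesis never gives a bound of the form $e(X,Y)\ge\alpha|X||Y|$ for two disjoint sets $X,Y$ that do not partition $V$, so the claimed $\Omega(\alpha^5 n^2)$ edges between $A_a$ and $B_b$ do not follow even if the two balls were arranged disjointly. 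The paper avoids bilateral growth altogether: the target side is the fixed set $W=\{w\}\cup N(w)$ (of size $\ge\alpha n$ by the minimum-degree assumption), and each step obeys the dichotomy that either $e(U_i,W)\ge\alpha^3 n^2/2$, in which case two further pigeonhole arguments glue immediately and produce the length-$(d+2)$ paths, or the ball grows by at least $\alpha^2 n/8$ vertices, which bounds the number of steps by $8/\alpha^2$. To repair your argument you would essentially have to import this one-sided, nested-ball structure; as written, both the layer-growth step and the gluing step fail.
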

\begin{proof}
Assume $\alpha\leq 1/2$ as otherwise the claim is trivial (with $d \in\{1,2\}$). We define sequences $(U_i)_{i\in\mathbb{N}}$ and $(H_i)_{i\in\mathbb{N}}\subseteq V$ as follows. Set $U_1=\{u\}\cup N(u), W=\{w\}\cup N(w)$ and $H_1=V\backslash (U_1\cup W)$ and proceed for $i\ge 1$ as follows. Let $\tilde U_{i+1}\subseteq H_i$ be the set of vertices $v\in H_i$ with $|N(v)\cap U_i|\geq \alpha^2 n/8$. Set $U_{i+1}=U_i\cup \tilde{U}_{i+1}$ and $H_{i+1}=H_i\backslash \tilde U_{i+1}$. Then we claim that for all $i \ge 1$
\begin{equation}
\label{eq:manyEdgesORlargeIncrease}
e(U_i,W)\geq \alpha^3 n^2/2 \quad\textrm{or}\quad
|U_{i+1}|  \ge |U_i|+\alpha^2 n/8 . 
\end{equation}
To see this, assume that $e(U_i,W)\leq \alpha^3 n^2/2 $; since $|U_i|, |W| \ge \alpha n$, the weak expansion property guarantees that
$$e(U_i,H_i)=e(U_i,H_i\cup W)-e(U_i,W)\geq \alpha|U_i||H_i\cup W|-\alpha^3n^2/2\geq \alpha^2(1-\alpha)n^2-\alpha^3n^2/2,$$
and using $\alpha \leq 1/2$ we obtain that
$e(U_i,H_i)\geq \alpha^2n^2/4.$
To complete the proof of~\eqref{eq:manyEdgesORlargeIncrease} we compute the size of $\tilde U_{i+1}$. As $|N(v)\cap U_i|\leq \alpha^2 n/8$ for all $v\in H_i\backslash \tilde{U}_{i+1}$ and $|N(v)\cap U_i|\le n$ we get
\begin{align*}
\frac{\alpha^2 n^2}{4}\leq e(U_i,H_i)\le|\tilde{U}_{i+1}| n + |H_i|\frac{\alpha^2 n}{8}.
\end{align*}
Since $|H_i| \le n$ we immediately get that $|\tilde{U}_{i+1}| \ge \alpha^2 n /8$, which shows~\eqref{eq:manyEdgesORlargeIncrease}.
We next show that there are (sufficiently) many paths for each vertex in $U_i$ to $u$. More precisely, let $1\leq j\leq 8/\alpha^2$ be such that $e(U_i,W) < \alpha^3 n^2/2 $ for all $1\leq i\leq j$. For those $i$ we  have by~\eqref{eq:manyEdgesORlargeIncrease} that $|U_i|\geq i\cdot\alpha^2 n/8$. We claim that for all $v\in U_i \setminus\{u\}$ there is $d\leq i$ such that $v$ has at least $(\alpha^4/64)^d \cdot n^{d-1}$ paths of length $d$ with endpoint $u$.
We show the claim by induction on $i$. The base case $v \in U_1 \setminus  \{u\}$ is clear, as $1\geq \alpha^4/64$. For the induction step assume that $v\in U_{i+1}\backslash U_i$, $v\neq u$. Then by construction $|N(v)\cap U_i|\geq \alpha^2 n/8$. Thus by induction hypothesis there is $d\leq i$ such that $v$ has at least $\alpha^2 n/(8i)$ neighbours with at least $(\alpha^4/64)^d n^{d-1}$ paths with endpoint $u$. As $i\leq 8/\alpha^2$ this gives that $v$ has at least $\alpha^2 n/(8i)\cdot (\alpha^4/64)^d n^{d-1} \geq (\alpha^4/64)^{d+1} n^{d}$ paths of length $d+1\leq i+1$ with endpoint $u$, and this accomplishes the induction step.
With all these facts at hand we finally show the claim of the lemma. Let $j\leq 8/\alpha^2$ be the first index such that $e(U_j,W)\geq \alpha^3 n^2/2$ and let $W'\subseteq W$ be such that $|N(v)\cap U_j|\geq \alpha^3 n/4$ for all $v\in W'$. Thus
\begin{align*}
\frac{\alpha^3 n^2 }{2}\leq e(U_j,W)\leq |W'| n + |W| \frac{\alpha^3 n}4,
\end{align*}
and thus $|W'|\geq \alpha^3 n/4$. Then there is $d\leq j$ and $W''\subseteq W'$ such that $|W''|\geq |W'|/j$ and every $v$ in $W''$ has at least $\alpha^3 n/(4j)$ neighbours with at least $(\alpha^4 /64)^{d} n^{d-1}$ paths of length $d$ with endpoint $u$. Therefore every $v\in W''$ hast at least $(\alpha^4 /64)^{d} n^{d-1} \cdot \alpha^3n /(4j) \ge  (\alpha^4 /64)^{d+1} n^{d}/j $ paths of length $d+1$ with endpoint $u$. This in turn gives that there are at least
$|W'|/j \cdot  (\alpha^4 /64)^{d+1} n^{d}/j
\geq {\alpha^3}/4 \cdot (\alpha^4 /64)^{d+2} n^{d+1}$
paths of length $d+2$ from $w$ to $u$, and the proof is completed.
\end{proof}
Next comes a technical lemma that given a small set quantifies the number of vertices for which only a small fraction of their neighbourhood intersects that given set. 
\begin{lemma}\label{fraction}
Let $\mathcal G=(G_n)_{n \in \N}=((V_n,E_n))_{n \in \N}$ be an expander sequence. Let $\varepsilon>0$ and let $\tilde{\mathcal G}=(\tilde G_n)_{n \in \N}$, where   each $\tilde G_n$ it is obtained from $G_n$ by deleting edges such that each vertex keeps at least a $(1/2 + \varepsilon)$ fraction of its edges. Let further $A_n\subseteq V_n$ with $|A_n| = o(n)$.
\begin{enumerate}[label={(\alph*)},ref={\thetheorem~(\alph*)}]
\item \label{fraction_a} There is $B_n\subseteq A_n$ with $|B_n|=(1-o(1))|A_n|$  such that for all $u\in B_n$
\begin{equation*}
\frac{|N_{\tilde G_n}(u)\cap A_n|}{|N_{\tilde G_n}(u)|} = o(1).
\end{equation*}\item \label{fraction_b} There is $ B_n\subseteq V_n\setminus A_n$ with $|V_n\setminus (A_n\cup B_n)|=o(| A_n|)$  such that for all $v\in B_n$
\begin{equation*}
\frac{|N_{\tilde G_n}(v)\cap A_n|}{|N_{\tilde G_n}(v)|} =  o(1).
\end{equation*}
\end{enumerate}
\end{lemma}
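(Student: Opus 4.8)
The plan is to derive both parts from one structural fact about expander sequences: any vertex set $W_n\subseteq V_n$ with $|W_n|=o(n)$ spans only $o(\Delta_n|W_n|)$ internal edges. Indeed, writing $2e_{G_n}(W_n)=\sum_{w\in W_n}d_{G_n}(w)-e_{G_n}(W_n,V_n\setminus W_n)$, bounding the degree sum by $\Delta_n|W_n|$ and applying Lemma~\ref{expmix} to the cut (valid since $|W_n|=o(n)\le n/2$) gives
\begin{align*}
2e_{G_n}(W_n)\le \Delta_n|W_n|-\Big(\tfrac{\Delta_n|W_n|(n-|W_n|)}{n}-o(\Delta_n)|W_n|\Big)=\tfrac{\Delta_n|W_n|^2}{n}+o(\Delta_n|W_n|)=o(\Delta_n|W_n|).
\end{align*}
As $\tilde G_n$ is a subgraph of $G_n$ the same bound holds for $e_{\tilde G_n}(W_n)$. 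I will also repeatedly use that each vertex keeps degree $|N_{\tilde G_n}(v)|\ge(1/2+\varepsilon)\delta_n=(1/2+\varepsilon)(1-o(1))\Delta_n$, so all denominators are of order $\Delta_n$.

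For part~(a) I would apply this with $W_n=A_n$: summing over $A_n$ gives $\sum_{u\in A_n}|N_{\tilde G_n}(u)\cap A_n|=2e_{\tilde G_n}(A_n)=o(\Delta_n|A_n|)$. Hence for every fixed $c>0$, Markov's inequality bounds the size of $D_n^{(c)}:=\{u\in A_n:|N_{\tilde G_n}(u)\cap A_n|\ge c\Delta_n\}$ by $o(\Delta_n|A_n|)/(c\Delta_n)=o(|A_n|)$. On $A_n\setminus D_n^{(c)}$ the fraction $|N_{\tilde G_n}(u)\cap A_n|/|N_{\tilde G_n}(u)|$ is at most $c\Delta_n/((1/2+\varepsilon)(1-o(1))\Delta_n)=O(c)$. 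A standard diagonalisation, choosing $c=c_n\to0$ slowly enough that $|D_n^{(c_n)}|=o(|A_n|)$ still holds, then yields a single $B_n=A_n\setminus D_n^{(c_n)}$ with $|B_n|=(1-o(1))|A_n|$ on which the fraction is $o(1)$.

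Part~(b) is the genuinely delicate case and is where I expect the main difficulty. The naive count fails: the number of $A_n$--$(V_n\setminus A_n)$ edges surviving in $\tilde G_n$ is at most $e_{G_n}(A_n,V_n\setminus A_n)=(1+o(1))\Delta_n|A_n|$, so Markov only bounds the number of outside vertices having $\ge c\Delta_n$ neighbours in $A_n$ by $O(|A_n|)$, not the required $o(|A_n|)$. The trick I would use is to feed this bad set back into the internal-edge bound. Fix $c>0$ and put $D_n:=\{v\in V_n\setminus A_n:|N_{\tilde G_n}(v)\cap A_n|\ge c\Delta_n\}$. A first crude count, $|D_n|c\Delta_n\le e_{G_n}(D_n,A_n)\le e_{G_n}(A_n,V_n\setminus A_n)=(1+o(1))\Delta_n|A_n|$, already gives $|D_n|=O(|A_n|)=o(n)$. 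Therefore $|A_n\cup D_n|=o(n)$, and the internal-edge fact applies to $W_n=A_n\cup D_n$: on one hand $e_{G_n}(A_n\cup D_n)=o(\Delta_n(|A_n|+|D_n|))=o(\Delta_n|A_n|)$, while on the other hand every $A_n$--$D_n$ edge lies inside $A_n\cup D_n$, so $e_{G_n}(A_n\cup D_n)\ge e_{G_n}(A_n,D_n)\ge|D_n|c\Delta_n$. Comparing the two forces $|D_n|c=o(|A_n|)$, i.e.\ $|D_n|=o(|A_n|)$ for each fixed $c$. Diagonalising over $c=c_n\to0$ as above and setting $B_n=(V_n\setminus A_n)\setminus D_n^{(c_n)}$ gives $|V_n\setminus(A_n\cup B_n)|=o(|A_n|)$, and since $|N_{\tilde G_n}(v)|=\Theta(\Delta_n)$ the fraction is $o(1)$ on $B_n$, as required. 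The one subtlety to get right is this self-referential use of the internal-edge estimate on the enlarged set $A_n\cup D_n$, which is precisely what upgrades the trivial $O(|A_n|)$ to $o(|A_n|)$.
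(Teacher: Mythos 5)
Your proof is correct. On part (a) it is essentially the paper's own argument: both you and the paper derive the internal-edge bound $e_{\tilde G_n}(A_n)=o(\Delta_n)|A_n|$ from Lemma~\ref{expmix} (this is the paper's equation~\eqref{eq:etgn}) and then count the vertices of $A_n$ with many neighbours inside $A_n$; your threshold $c\Delta_n$ and the paper's threshold $\eta\,|N_{\tilde G_n}(u)|$ are interchangeable, since every degree in $\tilde G_n$ is at least $(1/2+\varepsilon)(1-o(1))\Delta_n$. On part (b), however, your route is genuinely different, and it supplies a step that the paper's write-up skips: the paper dismisses (b) with ``as before, using~\eqref{eq:etgn}'', but the edges joining a bad vertex $v\in V_n\setminus A_n$ to $A_n$ lie in the cut $E_{\tilde G_n}(A_n,V_n\setminus A_n)$, not inside $A_n$, so \eqref{eq:etgn} does not bound them; and, exactly as you observe, the cut has size $(1+o(1))\Delta_n|A_n|$, so the naive Markov count yields only $O(|A_n|)$ bad vertices rather than the required $o(|A_n|)$. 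Your bootstrap --- first the crude bound $|D_n|=O(|A_n|)=o(n)$, then the internal-edge estimate applied to the enlarged, still sublinear, set $A_n\cup D_n$, which contains every $A_n$--$D_n$ edge --- is precisely what upgrades $O(|A_n|)$ to $o(|A_n|)$, and it uses only the cut form of the mixing lemma that the paper actually states. (An alternative repair would invoke a two-set expander mixing lemma bounding $e_{G_n}(A_n,D_n)$ by $o(\Delta_n)(|A_n|+|D_n|)$ directly, but that version is never stated in the paper; your argument in effect derives it from the cut version.) So: identical on (a), while on (b) your proof is not merely different but fills a genuine gap in the paper's one-sentence justification.
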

\begin{proof}
Let $\delta_n, \Delta_n$ denote the minimum and maximum degree of $G_n$. Lemma \ref{expmix} yields that
\begin{align*}
	e_{G_n}(A_n,V_n\setminus A_n)
	= \frac{\Delta_n |A_n||V_n\setminus A_n|}{n}+o(\Delta_n)|A_n|
	=(1+o(1))\Delta_n |A_n|.
\end{align*}
As there are a maximum of $\Delta_n |A_n|$ edges with at least one point in $A_n$, we get that 
$e_{G_n}(A_n)=o(\Delta_n)|A_n|$.
Since we obtain $\tilde G_n$ from $G_n$ by deleting edges 
\begin{equation}
\label{eq:etgn}
e_{\tilde G_n}(A_n)=o(\Delta_n)|A_n|.
\end{equation}
With this fact at hand we show a). Let $\eta > 0$ and call a vertex $u\in A_n$ bad if $|N_{\tilde G_n}(u) \cap A_n| \ge \eta |N_{\tilde G_n}(u)|$. Since $N_{\tilde G_n}(u) \ge \delta_n/2$ we obtain for any bad $u$ that $|N_{\tilde G_n}(u) \cap A_n| \ge \eta \delta_n/2$. As  $\delta_n = (1- o(1))\Delta_n$ we infer from~\eqref{eq:etgn} that the number of bad vertices is $o(|A_n|)$.

To see the b) let again $\eta > 0$ and call this time a vertex $v\in V_n\setminus A_n$ bad if $|N_{\tilde G_n}(v) \cap A_n| \ge \eta |N_{\tilde G_n}(v)|$. Then for any such bad $v$ we know that $|N_{\tilde G_n}(v) \cap A_n| \ge \eta \delta_n/2$. As before, using~\eqref{eq:etgn} we readily get that the number of bad $v$'s is $o(|A_n|)$.
\end{proof}
We conclude our preparational section by giving a lemma that  bounds crudely the time needed until at least $\omega(1)$ vertices are informed. 
\begin{lemma}\label{startup}
Let $0<\varepsilon \leq 1/2, q\in (0,1]$ and $\mathcal G=(G_n)_{n \in \N}$ be an expander sequence. Let $\tilde{\mathcal G}=(\tilde G_n)_{n \in \N}$ be such that each $\tilde G_n$ is obtained by deleting edges of $G_n$ such that each vertex keeps at least a $(1/2 + \varepsilon)$ fraction of its edges. Let further ${\cal P} \in \{push,pull,pp\}$ and suppose that $ |I_t|<\sqrt{\log n}$. Then there is $\tau=o(\log n)$ such that whp $|I_{t+\tau}^{(\cal P)}|\geq\sqrt{\log n}$. 
\end{lemma}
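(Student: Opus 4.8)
The plan is to derive a lower bound on the expected number of vertices informed in a single round and then convert it into concentration via the self-bounding variance bound and the Paley--Zygmund inequality, finally summing over $o(\log n)$ rounds with a Chernoff argument. The guiding picture is that during the startup phase the informed set grows by a constant factor as long as $|I_t|\le\delta_n$, and grows additively by $\Omega(\delta_n)$ once $|I_t|$ exceeds $\delta_n$; both regimes are traversed in $o(\log n)$ rounds. Throughout I write $U_t=V_n\setminus I_t$ and $X=|I_{t+1}\setminus I_t|$, and I always assume $1\le|I_t|<\sqrt{\log n}$, so in particular $|I_t|=o(n)$.

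First I would prove that there is a constant $c_1=c_1(\varepsilon,q)>0$ with $\mathbb{E}_t[X]\ge c_1\min(|I_t|,\delta_n)$. Since $|I_t|=o(n)$, Lemma~\ref{expmix} gives $e_{G_n}(I_t,U_t)=(1-o(1))\Delta_n|I_t|$, and Lemma~\ref{linearFraction} then yields $e_{\tilde G_n}(I_t,U_t)\ge\tfrac\varepsilon2\Delta_n|I_t|$ for all large $n$. For \pull the bound is immediate and even stronger: every $w\in U_t$ becomes informed with probability $q|N_{\tilde G_n}(w)\cap I_t|/d_{\tilde G_n}(w)\ge(q/\Delta_n)|N_{\tilde G_n}(w)\cap I_t|$, so summing over $w$ gives $\mathbb{E}_t[X]\ge(q/\Delta_n)e_{\tilde G_n}(I_t,U_t)\ge\tfrac{q\varepsilon}2|I_t|$; since in \pushpull every uninformed vertex performs exactly this pull step, the same bound holds there.

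The hard case is \push, where collisions (several informed vertices pushing onto the same target) can make the growth sublinear in $|I_t|$ once $|I_t|\gg\delta_n$; this case analysis is the main obstacle. Here I would use that $w\in U_t$ stays uninformed with probability $\prod_{u\in N(w)\cap I_t}(1-q/d_{\tilde G_n}(u))$, so with $s_w:=\sum_{u\in N(w)\cap I_t}q/d_{\tilde G_n}(u)$ one has $P_t[w\in I_{t+1}]\ge 1-e^{-s_w}\ge(1-e^{-1})\min(1,s_w)$, together with the elementary inequality $\min(1,s_w)\ge s_w/\max(1,s_{\max})$, where $s_{\max}:=\max_w s_w\le 2q|I_t|/\delta_n$. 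Summing over $w$ and using $\sum_w s_w=\sum_{u\in I_t}q|N_{\tilde G_n}(u)\cap U_t|/d_{\tilde G_n}(u)\ge(q/\Delta_n)e_{\tilde G_n}(I_t,U_t)\ge\tfrac{q\varepsilon}2|I_t|$, I would distinguish the two cases $|I_t|\le\delta_n$ (then $s_{\max}\le 2q\le 2$, giving multiplicative growth $\mathbb{E}_t[X]\ge(1-e^{-1})\tfrac{q\varepsilon}4|I_t|$) and $|I_t|>\delta_n$ (then $\mathbb{E}_t[X]\ge(1-e^{-1})\tfrac{q\varepsilon}4\delta_n$), which together give the claimed bound with $c_1=(1-e^{-1})q\varepsilon/4$.

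Finally I would upgrade the expectation into per-round progress and sum up. Applying the Remark following Lemma~\ref{appllugsi} with $U=U_t$ shows that $X=|I_{t+1}\cap U_t|$ is self-bounding, so by Lemma~\ref{lugosi} $\mathrm{Var}_t[X]\le\mathbb{E}_t[X]=:\mu$; it is essential here that the variance is controlled by $\mu$ and not by $|I_t|$. Paley--Zygmund then gives $P_t[X\ge\tfrac12\mu]\ge\tfrac14\mu^2/(\mu+\mu^2)=\mu/(4(1+\mu))\ge p_0$ for a constant $p_0>0$, since $\mu\ge c_1$. Calling a round \emph{good} when $X\ge\tfrac12\mu$, a good round multiplies $|I_t|$ by $1+c_1/2$ while $|I_t|\le\delta_n$ and adds $\tfrac{c_1}2\delta_n$ afterwards, so at most $K=O(\log\log n)+O(\sqrt{\log n}/\delta_n)=o(\log n)$ good rounds, with $K=\omega(1)$, suffice to reach $\sqrt{\log n}$. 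Setting $\tau=\lceil 2K/p_0\rceil=o(\log n)$ and conditioning on the history at each step, the number of good rounds among the $\tau$ rounds stochastically dominates a binomial variable with parameters $\tau$ and $p_0$ and mean $\ge 2K$; a Chernoff bound then yields at least $K$ good rounds with probability $1-e^{-\Omega(K)}=1-o(1)$, completing the proof.
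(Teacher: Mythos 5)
Your proposal is correct, but it takes a genuinely different and considerably stronger route than the paper's own proof. You establish a per-round lower bound $\mathbb{E}_t[X]\geq c_1\min(|I_t|,\delta_n)$ on the expected growth (handling \push collisions via the $1-e^{-s_w}$ estimate), convert it into constant-probability progress using the self-bounding remark together with Paley--Zygmund, and then count good rounds, obtaining $\tau=O(\log\log n+\sqrt{\log n}/\delta_n)$. The paper does something much more modest: it only bounds the probability that \emph{no} new vertex is informed in a round, namely $P_t[|I_{t+1}\setminus I_t|=0]\leq e^{-q e(I_t,U_t)/\Delta_n}$, combines this with Lemmas~\ref{expmix} and~\ref{linearFraction} (exactly as you do) to get a constant $c$ with $P_t[|I_{t+1}\setminus I_t|\geq 1]>c$, and then takes $\tau=\lceil 2\sqrt{\log n}\rceil$ rounds so that the number of rounds with at least one newly informed vertex dominates $\mathrm{Bin}(\tau,c)$, which exceeds $\sqrt{\log n}$ whp by Chebyshev. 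Since any $\tau=o(\log n)$ suffices for the lemma, the paper's ``one vertex per round'' argument avoids your entire expectation-and-concentration machinery (in particular the case analysis for \push and the Paley--Zygmund step); what your argument buys in exchange is a quantitatively much better startup time and a reusable per-round growth estimate. Two small points in your write-up deserve care when formalizing: the stochastic domination should be set up with the convention that rounds after reaching $\sqrt{\log n}$ count as good (so the conditional bound $p_0$ applies uniformly), and $K$ should be defined explicitly as $\lceil\log_{1+c_1/2}\sqrt{\log n}\rceil+\lceil 2\sqrt{\log n}/(c_1\delta_n)\rceil$ so that $K=\omega(1)$ and $K=o(\log n)$ are immediate; note also that you do not actually need $\delta_n=\omega(1)$ here, since $\sqrt{\log n}/\delta_n\leq\sqrt{\log n}=o(\log n)$ already. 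Neither issue is a gap.
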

\begin{proof}
Recall that the probability that  $v\in U_t$ gets informed by \pull  is $q|N(v)\cap I_t|/|N(v)|$. Thus
\begin{align*}
P_t[|I_{t+1}^{(pull)}\backslash I_t| = 0] = \prod\limits_{u\in N(I_t)\cap U_t}\left(1-\frac{q|N(u)\cap I_t)|}{|N(u)|}\right)
\leq e^{-qe(U_t,I_t)/\Delta_n}.
\end{align*}
Similarly we obtain for  \push
\begin{align*}
P_t[|I_{t+1}^{(push)}\backslash I_t| = 0] = \prod_{v \in I_t} \frac{|N(v) \cap I_t|}{|N(v)|} 
= \prod_{v \in I_t} \left(1-\frac{|N(v) \cap U_t|}{|N(v)|}\right)
\le e^{-qe(I_t,U_t)/\Delta_n}.
\end{align*}
The same bound is obviously also true for \pushpull. Thus, for all ${\cal P}\in \{push,pull,pp\}$  
$$ P_t[|I_{t+1}^{(\cal P)}\backslash I_t| \geq 1]\geq 1-e^{-qe(U_t,I_t)/\Delta_n}.$$
As Lemma \ref{expmix} and Lemma \ref{linearFraction} imply that $e(U_t,I_t)\geq (1+o(1))\varepsilon\Delta_n|I_t|$, there is $c\in(0,1)$ such that $P[|I_{t+1}^{(\cal P)}\backslash I_t|\geq 1] >c$.
Define $\tau := \lceil 2\sqrt{\log n} \rceil$ and $X=\text{Bin}(\tau, c)$ with $\mathbb{E}[X]=c\tau$ and $\textrm{Var}[X]=\tau (1-c)c$. Then, using Chebyshev
\begin{align*}
P_t\left[|I_{t+\tau}^{(\cal P)}| \leq \sqrt{\log n}\right]\leq P_t\left[X\leq \sqrt{\log n}\right]\leq  P_t\left[|X-\mathbb{E}[X]| \leq \mathbb{E}[X]/2\right]\leq {4\textrm{Var}[X]}/{\mathbb{E}[X]^2}=o(1).
\end{align*}
\end{proof}
\section{Proofs}

\subsection{Proof of Theorems \ref{pullFast}, \ref{pullIsRobust} --- edge deletions do not slow down \pull}\label{pull_fast_sec}

Let $0 < \varepsilon \le 1/2$. In this section we study the runtime of \pull in the case in which the input graph is an expander, and where at each vertex at most an $(1/2 - \varepsilon)$ fraction of the edges is deleted. The runtime on expander sequences without edge deletions, that is, the setting in Theorem~\ref{pullFast}, is included as the special case where we set $\varepsilon=1/2$. In contrast to previous proofs, in the analysis of \pull the `standard' approach that consists of showing, for example, that $\mathbb{E}_t[|I_{t+1} \setminus I_t|] \approx |I_t|$ fails. The main reason is that the graph between $I_t$ and $U_t$ might be quite irregular, so that, depending on the actual state, $\mathbb{E}_t[|I_{t+1} \setminus I_t|] \approx c|I_t|$ for some $c < 1$. However, we discover a different invariant that is preserved, namely that the number of edges between $I_t$ and $U_t$ behaves in an exponential way. With Lemmas~\ref{expmix} and \ref{linearFraction} we can then relate this to the number of informed vertices.
\begin{lemma}\label{expResPull}
Consider the setting of Theorem \ref{pullIsRobust} and let $I_t=I_t^{(pull)}$.
\begin{enumerate}[label={(\alph*)},ref={\thetheorem~(\alph*)}]
\itemsep0em 
\item \label{expResPull_a}Let  $\sqrt{\log n}\leq |I_t|\leq n/\log n$. Then 
$|e(U_{t+1},I_{t+1}) - (1+q)e(U_t,I_t)| \le |I_t|^{-1/3}e(U_t,I_t)$ with probability at least  $1-O(|I_t|^{-1/3})$.
\item \label{expResPull_b}Let $|U_t|\leq n/\log n$. Then
$\mathbb{E}_{t}[|U_{t+1}|] = (1-q+o(1))|U_t|. $
\end{enumerate}
\end{lemma}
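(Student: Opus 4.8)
The plan is to treat the two statements separately: each reduces to a conditional first-moment computation, for (b) this already suffices, while for (a) a concentration step must be added. Throughout I use that, conditional on $I_t$, each vertex $v\in U_t$ becomes informed in round $t$ independently of the others with probability $p_v:=q\,|N(v)\cap I_t|/|N(v)|$ (it draws a uniform neighbour and the transmission succeeds with probability $q$); write $Z_v=\mathbb{1}[v\in I_{t+1}\setminus I_t]$ for the associated indicators. For part (b) I would simply compute
$$\mathbb{E}_t\big[|U_{t+1}|\big]=\sum_{v\in U_t}(1-p_v)=(1-q)|U_t|+q\sum_{v\in U_t}\frac{|N(v)\cap U_t|}{|N(v)|},$$
using $|N(v)\cap I_t|=|N(v)|-|N(v)\cap U_t|$. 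Since $|U_t|\le n/\log n=o(n)$, Lemma~\ref{fraction_a} applied with $A_n=U_t$ produces a subset $B_n\subseteq U_t$ of size $(1-o(1))|U_t|$ on which $|N(v)\cap U_t|/|N(v)|=o(1)$; bounding the remaining $o(|U_t|)$ vertices trivially by $1$ shows the last sum is $o(|U_t|)$, and the claim follows.

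For part (a) I would again argue by linearity, but over edges. Every edge present at time $t$ is either one of the $e(U_t,I_t)$ cross edges or an edge inside $U_t$; a cross edge $\{u,w\}$ with $u\in U_t$ survives as a cross edge iff $u$ stays uninformed, whereas an edge inside $U_t$ becomes a cross edge iff exactly one of its endpoints gets informed. Taking expectations over the independent $Z_v$ and using $|N(v)\cap U_t|=|N(v)|-|N(v)\cap I_t|$ in the second family yields
$$\mathbb{E}_t\big[e(U_{t+1},I_{t+1})\big]=(1+q)\,e(U_t,I_t)-O\Big(\sum_{v\in U_t}\tfrac{|N(v)\cap I_t|^2}{|N(v)|}\Big).$$
The heart of the matter is that this correction is negligible relative to $e(U_t,I_t)$. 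I would bound it through the Expander Mixing Lemma in spectral form: since $|N_{\tilde G}(v)\cap I_t|\le|N_{G}(v)\cap I_t|$ and $|N_{\tilde G}(v)|\ge(1/2+\varepsilon)\delta_n$, the correction is $O\big(\delta_n^{-1}\sum_v|N_G(v)\cap I_t|^2\big)=O\big(\delta_n^{-1}(\Delta_n^2|I_t|^2/n+\lambda_n^2|I_t|)\big)$; combined with $e(U_t,I_t)=\Theta(\Delta_n|I_t|)$ (via Lemmas~\ref{expmix} and~\ref{linearFraction}), $|I_t|\le n/\log n$ and $\lambda_n=o(\Delta_n)$, this makes the relative bias vanish.

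For the concentration I cannot invoke the self-bounding estimate of Lemma~\ref{lugosiappl}, which bounds the variance of $|I_{t+1}|$ rather than of $e(U_{t+1},I_{t+1})$; instead I would estimate the variance of the quadratic form $e(U_{t+1},I_{t+1})$ in the independent indicators $(Z_v)_{v\in U_t}$ directly. Its fluctuation is dominated by the linear part $\sum_{v\in U_t}\big(|N(v)\cap U_t|-|N(v)\cap I_t|\big)Z_v$, whose coefficients are at most $\Delta_n$ in modulus, so that $\textrm{Var}_t[e(U_{t+1},I_{t+1})]=O(\Delta_n\,e(U_t,I_t))$ once the smaller quadratic contributions are absorbed. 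Chebyshev then delivers a deviation of at most $|I_t|^{-1/3}e(U_t,I_t)$ with failure probability $O(|I_t|^{-1/3})$, because $\Delta_n e(U_t,I_t)/(|I_t|^{-1/3}e(U_t,I_t))^2=O(|I_t|^{2/3}\Delta_n/e(U_t,I_t))=O(|I_t|^{-1/3})$.

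I expect the main obstacle to be the uniform control of the error terms across the entire window $\sqrt{\log n}\le|I_t|\le n/\log n$: the expectation bias must be killed by expander mixing and the fluctuation by the variance bound, yet \emph{both} have to come out $O(|I_t|^{-1/3})$ relative. The pieces $|I_t|/n$ and $\lambda_n^2/\Delta_n^2$ of the bias are the delicate ones and must be balanced carefully against $|I_t|^{-1/3}$; this is precisely where the sharp spectral estimate for $\sum_v|N(v)\cap I_t|^2$ and the lower bound $e(U_t,I_t)=\Theta(\Delta_n|I_t|)$ do the real work.
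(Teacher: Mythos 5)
Your part (b) is exactly the paper's proof: the same one-line expectation identity followed by Lemma \ref{fraction_a} applied to $A_n=U_t$, with the trivial lower bound $(1-q)|U_t|$. Your skeleton for part (a) also coincides with the paper's: the paper computes $\mathbb{E}_t[e(U_{t+1},I_{t+1})-e(U_t,I_t)]$ from the very same edge decomposition (a cross edge survives iff its uninformed endpoint stays uninformed; an edge inside $U_t$ becomes a cross edge iff exactly one endpoint is informed), shows the relative bias is $o(1)$, bounds $\textrm{Var}_t[e(U_{t+1},I_{t+1})]=O\big(\Delta_n\,\mathbb{E}_t[e(U_{t+1},I_{t+1})]\big)$, and finishes with Chebyshev using $e(U_t,I_t)=\Theta(\Delta_n|I_t|)$ from Lemmas \ref{expmix} and \ref{linearFraction}; your final Chebyshev computation is identical. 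The variance step differs only in organization: the paper sums over edge indicators $X_e=\mathbb{1}[e\in E(U_{t+1},I_{t+1})]$, using independence of $X_e,X_{e'}$ for vertex-disjoint edges, which yields $\textrm{Var}\le 2\Delta_n\mathbb{E}[e(U_{t+1},I_{t+1})]$ immediately; your split into a linear part in the $(Z_v)$ plus the quadratic part $2\sum Z_uZ_v$ gives the same bound, though the covariance of the two parts and the quadratic self-covariances would need to be written out.

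The one soft spot is your bias control. The estimate $\sum_v|N_{G}(v)\cap I_t|^2=O\big(\Delta_n^2|I_t|^2/n+\lambda_n^2|I_t|\big)$ is standard for \emph{regular} graphs, but here the graphs are only near-regular: $\lambda_n$ is defined through the adjacency matrix, whose Perron eigenvector is not the all-ones vector, so this ``spectral form'' of the mixing lemma is not available off the shelf (and is not in the paper). Making it rigorous requires showing the Perron vector is $\ell_2$-close to uniform (true under $\Delta_n/\delta_n=1+o(1)$, $\lambda_n=o(\Delta_n)$, but an extra argument). The paper sidesteps this by applying Lemma \ref{fraction_b} (with $A_n=I_t$) to find large subsets on which $|N(u)\cap I_t|/|N(u)|=o(1)$; you can do the same directly on your correction term, since then $\sum_{v\in U_t}|N(v)\cap I_t|^2/|N(v)|\le o(1)\,e(U_t,I_t)+o(|I_t|)\Delta_n=o(e(U_t,I_t))$. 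Finally, the obstacle you flag at the end is real but afflicts the paper equally: neither argument shows the bias is $O(|I_t|^{-1/3})e(U_t,I_t)$ (for $|I_t|\gg n^{3/4}$ your term $|I_t|/n$ exceeds $|I_t|^{-1/3}$, and the paper's $o(1)$ terms depend on $n$ alone); both proofs really establish concentration around $(1+q-o(1))e(U_t,I_t)$, which is all that is used downstream in Lemma \ref{pullResUpper}.
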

\begin{proof}
We start with $a)$. Let $D_t=e(U_{t+1},I_{t+1})-e(U_{t},I_{t})$ and for $u\in U_t$ let $X_u$ be the random variable that indicates whether $u$ gets informed in round $t+1$. Then 
\begin{align*}
\mathbb{E}_t[D_t] &= \sum\limits_{u \in U_t}\sum\limits_{v \in N(u)\cap U_t}\mathbb{E}_t[X_u(1-X_v)]-\sum\limits_{u \in U_t}\mathbb{E}_t[X_u]\cdot |N(u)\cap I_t|\\& =  
\sum\limits_{u \in U_t}q\frac{|N(u)\cap I_t|}{|N(u)|}\left(\left(\sum\limits_{v \in N(u)\cap U_t}1-q\frac{|N(v)\cap I_t|}{|N(v)|}\right)- |N(u)\cap I_t|\right).
\end{align*}
The second sum is at most $|N(u)|$, so obviously $\mathbb{E}_t[D_t] \le qe(U_t,I_t)$. To get a lower bound consider a largest set $\tilde U\subseteq U_t$  such that $|N(u)\cap I_t|/|N(u)|=o(1)$ for all $u\in \tilde U$.  From Lemma \ref{fraction_b} we obtain that $|U_t\setminus \tilde U|=o(|I_t|)$, and so
\begin{align*}
\mathbb{E}_t[D_t] &\geq
\sum\limits_{u \in U_t}q|N(u)\cap I_t|\left(\left(\sum\limits_{v \in N(u)\cap \tilde U}\frac{1}{|N(u)|}-o\left(\frac{1}{|N(u)|}\right)\right)-\frac{|N(u)\cap I_t|}{|N(u)|}\right).
\end{align*}
Consider furthermore $\hat U \subseteq \tilde U$ such that $|N(u)\cap \tilde U|/|N(u)|=1-o(1)$ and thus also  $|N(u)\cap I_t|/|N(u)|=o(1)$ for all $u\in \hat U$. Lemma \ref{fraction_b} again yields that we can choose $\hat U$ such that $|U_t\backslash \hat U|=o(|I_t|)$, thus
\begin{align*}
\mathbb{E}_t[D_t]&\geq (1-o(1))
\sum\limits_{u \in \hat U}q|N(u)\cap I_t|\left(\frac{|N(u)\cap \tilde U|}{|N(u)|}-\frac{|N(u)\cap I_t|}{|N(u)|}\right)-\sum_{u\in U_t\setminus \hat U}{|N(u)\cap I_t|}\\&\geq (q-o(1))e(U_t,I_t)-2e(U_t\backslash\hat{U},I_t).
\end{align*}
According to Lemmas  \ref{expmix} and \ref{linearFraction} we have that $e(U_t,I_t)=\Theta(|I_t|\Delta_n)$. But $e(U_t\backslash\tilde{U},I_t)\leq |U_t\backslash \tilde{U}|\Delta_n=o(|I_t|\Delta_n)$. Thus, $\mathbb{E}_t[e(U_{t+1},I_{t+1})]=(1+q-o(1))e(U_t,I_t)$.
In the next step we bound the variance. For each edge $e$ let $X_e$ be the indicator random variable that denotes the events that $e\in E(U_{t+1},I_{t+1})$. Thus 
$$e(U_{t+1},I_{t+1})=\sum\limits_{e\in E}X_e=\frac{1}{2}\sum\limits_{u\in V}\sum\limits_{v\in N(u)}X_{\{u,v\}}.$$
Using that $X_e$ and $X_{e'}$ are independent for all $e,e'\in E$ with $e\cap e' =\emptyset$,
\begin{align*}
 \textrm{Var}[e(U_{t+1},I_{t+1})] &= \textrm{Var}\left[\sum\limits_{e\in E}X_e\right] =\sum\limits_{e,e'\in E}\mathbb{E}[X_eX_{e'}]-\mathbb{E}[X_e]\mathbb{E}[X_{e'}]\\&\leq \sum\limits_{u\in V}\sum\limits_{v,v'\in N(u)}\mathbb{E}[X_{\{u,v\}}X_{\{u,v'\}}]\leq \Delta_n\sum\limits_{u\in V}\sum\limits_{v\in N(u)}\mathbb{E}[X_{\{u,v\}}]= 2\Delta_n \mathbb{E}[e(U_{t+1},I_{t+1})].
\end{align*}
Since $\mathbb{E}_t[e(U_{t+1},I_{t+1})] = (1 + q - o(1))e(U_t,I_t) =   \Theta(\Delta_n |I_t| )$ by Lemmas \ref{expmix} and \ref{linearFraction} and Var$[e(U_{t+1},I_{t+1})] \le 2\Delta_n \mathbb{E}_t[e(U_{t+1},I_{t+1})]$ we obtain for $|I_t|\geq \sqrt{\log n}$ with Cheybyshev's inequality immediately that 
\begin{equation*}
\begin{aligned}
&P\left[|e(U_{t+1},I_{t+1})-\mathbb{E}_t[e(U_{t+1},I_{t+1})]|\geq e(U_t,I_t)|I_t|^{-1/3}\right] \leq O(|I_t|^{-1/3}).
\end{aligned}
\end{equation*}
Next we show $b)$. 
We bound the expected number of uninformed vertices after one additional round. Lemma \ref{fraction_a}  asserts that there is a set $\tilde U\subseteq U_t$ such that $|\tilde U| = (1-o(1))|U_t|$ and $|N(u)\cap I_t|/|N(u)|=1-o(1)$ for all $u\in \tilde{U}$. Thus,
\begin{equation*}
\mathbb{E}_{t}[|U_{t+1}|]=
\sum\limits_{u\in U_t} 1-q\frac{|N(u)\cap I_t|}{|N(u)|} \leq|U_t|-q \sum\limits_{u\in \tilde U}\frac{|N(u)\cap I_t|}{|N(u)|}=  |U_t|-q(1-o(1))|\tilde U|
= \left(1-q -o(1)\right) |U_t|.
\end{equation*}
As $|N(u)\cap I_t|\leq |N(u)|$ we also have
\begin{equation*}
\mathbb{E}_{t}[|U_{t+1}|]=
\sum\limits_{u\in U_t} 1-q\frac{|N(u)\cap I_t|}{|N(u)|} \geq \sum\limits_{u\in U_t} (1-q)=\left(1-q\right) |U_t|.
\end{equation*}
\end{proof}
Lemma \ref{pullResUpper} and Lemma \ref{startup} give lower bounds, that together with an upper bound provided by Lemma \ref{pullLower} imply Theorems \ref{pullFast} and \ref{pullIsRobust}.
\begin{lemma}[Upper bound in Theorem \ref{pullIsRobust}]\label{pullResUpper}
Consider the setting of Theorem \ref{pullIsRobust} and let $I_t=I_t^{(pull)}$, then the following statements hold whp.
\begin{enumerate}[label={(\alph*)},ref={\thetheorem~(\alph*)}]
\itemsep0em 
\item \label{pullResUpper_a} Let $\sqrt{\log n}\leq|I_t| \leq n/\log n$. Then there are $\tau_1, \tau_2= \log_{1+q} (n/|I_t|)+o(\log n)$ such that $|I_{t+\tau_2}|<n/\log n<|I_{t+\tau_1}|.$
\item \label{pullResUpper_b} Let $n/\log n\leq|I_t| \leq n -n/\log n$. Then there is $\tau = o(\log n)$ such that $|I_{t+\tau}| >n-n/\log n.$
\item \label{pullResUpper_c}Let $|I_t| ~\geq n - n/\log n.$
\begin{enumerate}
\itemsep0em 
\item[1.] Case $q= 1$: Then there is $\tau = o(\log n)$ such that $|I_{t+\tau}| =n.$
\item[2.] Case $q\neq1$: Then there is $\tau \leq -\log n/\log\left(1-q\right) +o(\log n)$ such that $|I_{t+\tau}| =n$.
\end{enumerate}
\end{enumerate}
\end{lemma}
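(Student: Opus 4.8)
The plan is to treat the three phases separately, in each case following the quantity that evolves most cleanly and feeding it into the concentration machinery of Section~\ref{basiclemmas_sec}. The basic dictionary I will use throughout is that, by Lemmas~\ref{expmix} and~\ref{linearFraction}, $e(U_s,I_s)=\Theta(\Delta_n|I_s|)$ whenever $|I_s|\le n/2$ (equivalently $|U_s|\ge n/2$), so edge counts and vertex counts are interchangeable up to constant factors; these constants only contribute $o(\log n)$ once one passes to logarithms, so they are harmless for locating the $\tau$'s.

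For part (a) I would \emph{not} track $|I_t|$ directly, since for \pull its one-round growth factor need not be clean; instead I track the edge count $e(U_t,I_t)$, which by Lemma~\ref{expResPull_a} grows by a factor $(1+q)$ per round with per-round failure probability $O(|I_t|^{-1/3})$. Because $e(U_s,I_s)=\Theta(\Delta_n|I_s|)$, the informed set grows geometrically, $|I_s|=\Theta((1+q)^{s-t}|I_t|)$, so the per-round failure probability is exactly of the form $O((c^{s-t}|I_t|)^{-1/3})$ with $c=1+q$ demanded by Proposition~\ref{concentration}. I would then proceed exactly as in Example~\ref{concetrationExample}: take $\mathcal A_s$ to be the event that $e(U_s,I_s)$ lies within a factor $(1\pm|I_{s-1}|^{-1/3})$ of $(1+q)\,e(U_{s-1},I_{s-1})$, apply Proposition~\ref{concentration} to conclude that all these events hold whp, and multiply the per-round bounds out. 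Since $\sum_s|I_s|^{-1/3}$ is a geometric series summing to $O(|I_t|^{-1/3})\le (\log n)^{-1/6}=o(1)$, the accumulated multiplicative error is $1+o(1)$, giving $e(U_{t+\tau},I_{t+\tau})=(1+q)^{\tau}e(U_t,I_t)(1+o(1))$ whp. Converting this two-sided bound back to $|I_{t+\tau}|$ via Lemmas~\ref{expmix} and~\ref{linearFraction} produces a $\tau_1$ with $|I_{t+\tau_1}|>n/\log n$ and a $\tau_2$ with $|I_{t+\tau_2}|<n/\log n$, each equal to $\log_{1+q}(n/|I_t|)+o(\log n)$.

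For part (b) I would split at $|I_t|=n/2$. While $n/\log n\le|I_t|\le n/2$, the estimate $\mathbb{E}_t[|I_{t+1}\setminus I_t|]\ge q\,e(U_t,I_t)/\Delta_n\ge(q\varepsilon/4)|I_t|$ (using $e(U_t,I_t)\ge(1-o(1))\varepsilon\Delta_n|I_t|(n-|I_t|)/n$ together with $n-|I_t|\ge n/2$) gives $\mathbb{E}_t[|I_{t+1}|]\ge(1+q\varepsilon/4-o(1))|I_t|$, so Example~\ref{concetrationRemark} with $f=\log n$, $g=2$ reaches $n/2$ in $\log_{1+q\varepsilon/4}(\log n)+o(\log n)=o(\log n)$ rounds. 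While $n/2\le|I_t|\le n-n/\log n$, i.e.\ $n/\log n\le|U_t|\le n/2$, the symmetric estimate $\mathbb{E}_t[|U_{t+1}|]\le(1-q\varepsilon/4)|U_t|$ holds by the same edge count; here $\mathbb{E}_t[|U_{t+1}|]\ge(1-o(1))n/\log n$, so Lemma~\ref{lugosiappl} and Chebyshev give a per-round failure probability $n^{-1/3+o(1)}$, and a crude union bound over the $O(\log\log n)$ rounds of this sub-phase finishes it. Both sub-phases together cost $o(\log n)$ rounds.

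Part (c) is where the cleanest idea applies and where I expect the only genuine trap. The temptation is to concentrate $|U_{t+1}|$, but the paper's main tool is useless here: $\textrm{Var}[|U_{t+1}|]=\textrm{Var}[|I_{t+1}|]\le\mathbb{E}[|I_{t+1}|]\approx n$ by Lemma~\ref{lugosiappl}, which is far too large once $|U_t|$ is small. The fix is to avoid concentration altogether. Since $U_{s+1}\subseteq U_s$, the hypothesis $|U_t|\le n/\log n$ of Lemma~\ref{expResPull_b} holds \emph{deterministically} at every subsequent round, so $\mathbb{E}_s[|U_{s+1}|]\le\phi\,|U_s|$ with $\phi=1-q+o(1)$ holds unconditionally; the tower property then yields $\mathbb{E}[|U_{t+k}|]\le\phi^{k}|U_t|\le\phi^{k}n/\log n$. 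Choosing $k$ minimal with $\phi^{k}n/\log n=o(1)$ makes $\mathbb{E}[|U_{t+k}|]=o(1)$, so Markov's inequality gives $|U_{t+k}|=0$ whp. For $q=1$ we have $\phi=o(1)$, so the expander condition $\lambda_n=o(\Delta_n)$ (which is exactly what forces $\phi=o(1)$) guarantees $\log(1/\phi)\to\infty$ and hence $k=\log n/\log(1/\phi)=o(\log n)$; for $q\ne1$ we have $\phi=1-q+o(1)$ constant, giving $k=-\log n/\log(1-q)+o(\log n)$, precisely as claimed. The single point requiring care is thus the realization that monotonicity of $U$ plus Markov, rather than a variance bound, is the right engine for the final phase.
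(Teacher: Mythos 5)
Your proposal is correct and, for parts (a) and (c), is essentially identical to the paper's proof: part (a) tracks $e(U_s,I_s)$ via Lemma~\ref{expResPull_a} and feeds it into Proposition~\ref{concentration} exactly as in Examples~\ref{concetrationRemark} and~\ref{concetrationExample}, translating back to $|I_s|$ via Lemmas~\ref{expmix} and~\ref{linearFraction}; part (c) is precisely the paper's engine — iterate Lemma~\ref{expResPull_b} in expectation (using that $U_{s+1}\subseteq U_s$ keeps the hypothesis valid) and finish with a single application of Markov, with the same case split and the same values of $\tau$. The one place you diverge is the second sub-phase of (b), where $|I_t|\in[n/2,\,n-n/\log n]$: you propose per-round Chebyshev (via Lemma~\ref{lugosiappl}) plus a union bound over $O(\log\log n)$ rounds, whereas the paper iterates $\mathbb{E}_t[|U_{t+1}|]\leq(1-q\varepsilon/2+o(1))|U_t|$ over $\tau=o(\log n)$ rounds and applies Markov once to get $P_t[|U_{t+\tau}|\geq n/\log n]=o(1)$ — i.e.\ it reuses there the very ``expectation plus Markov, no concentration'' trick that you reserve for part (c). Your variant does work, but your justification contains a slip: the claim $\mathbb{E}_t[|U_{t+1}|]\geq(1-o(1))\,n/\log n$ is false in general. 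Only the trivial bound $\mathbb{E}_t[|U_{t+1}|]\geq(1-q)|U_t|$ is available, and this can be small or even zero: the adversary may delete \emph{all} edges inside a prescribed set $S$ of size $n/\log n$ (this is compatible with keeping a $(1/2+\varepsilon)$ fraction of edges at every vertex, since such vertices have only $\approx\Delta_n/\log n$ neighbours in $S$), and if $U_t=S$ then every uninformed vertex has all neighbours informed, so $\mathbb{E}_t[|U_{t+1}|]=(1-q)|U_t|$, which vanishes for $q=1$. Then your relative deviation $\mathbb{E}_t[|U_{t+1}|]^{2/3}$ yields a useless Chebyshev bound. The repair is one line: use an absolute deviation, namely $\mathrm{Var}_t[|U_{t+1}|]=\mathrm{Var}_t[|I_{t+1}|]\leq\mathbb{E}_t[|I_{t+1}|]\leq n$, so $P_t\bigl[|U_{t+1}|\geq\mathbb{E}_t[|U_{t+1}|]+n^{2/3}\bigr]\leq n^{-1/3}$, and $n^{2/3}=o(|U_t|)$ throughout this sub-phase, so the per-round contraction $|U_{t+1}|\leq(1-q\varepsilon/4+o(1))|U_t|$ survives; alternatively, simply adopt the paper's Markov route here as well.
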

\begin{proof}
We start with $a)$. Let $|I_t|\in [\log n,n/\log n].$  First note that any bound on $e(U_t,I_t)$ translates to a bound for $|I_t|$, as with Lemmas \ref{expmix}, \ref{linearFraction} we obtain 
\begin{equation}
\label{eq:auxIte}
	(1-o(1))\varepsilon \Delta_n |I_t|  \le e(U_t,I_t) \le \Delta_n |I_t|.
\end{equation}
In particular, up to constant factors, $|I_t|$ is $e(U_t, I_t) / \Delta_n$ and vice versa. From Lemma \ref{expResPull_a} we obtain that $e(U_{t+1},I_{t+1}) = (1+q\pm |I_t|^{-1/3})e(U_t,I_t)$ with probability $1-O(|I_t|^{-1/3})$.
%
Proceeding as in Examples \ref{concetrationRemark} and \ref{concetrationExample}, where we replace the events   ``$|I_{t}| \ge \mathbb{E}_{t-1}\left[|I_{t}|\right]-  \mathbb{E}_{t-1}\left[|I_{t}|\right]^{2/3}$ or $|I_t| \ge n/g(n)\text{''}$ and ``$\left||I_{t}|-\mathbb{E}_{t-1}\left[|I_{t}|\right]\right|\leq \mathbb{E}_{t-1}\left[|I_{t}|\right]^{2/3}$'' with ``$e(U_{t},I_{t}) \geq  (1+q- |I_{t-1}|^{-1/3})e(U_{t-1},I_{t-1})$ or $|I_{t}|\geq n/\log n$'' and ``$e(U_{t+1},I_{t+1}) = (1+q\pm |I_t|^{-1/3})e(U_t,I_t)$'' we obtain the statement.

We continue with $b)$. Consider first the case $|I_t|\in[n/\log n,n/2]$. Using Lemmas \ref{expmix}, \ref{linearFraction}, i.e. $e(U_t,I_t)\geq \varepsilon|U_t||I_t|\Delta_n/n+o(\Delta_n)|I_t|$, together with $|U_t|\geq n/2$ implies
 \begin{align*}
\mathbb{E}_{t}[|I_{t+1}\backslash I_t|] =\sum_{u\in U_t}q\frac{|N(u)\cap I_t|}{|N(u)|}\geq \frac{q\cdot e(U_t,I_t)}{\Delta_n}\geq
 \frac{q\varepsilon|U_t||I_t|\Delta_n/n+o(\Delta_n)|I_t|}{\Delta_n(1+o(1))} \geq \left(\frac{q\varepsilon}{2}+ o(1)\right)|I_t|.
\end{align*}
Proceeding as in Example \ref{concetrationRemark}, where we set $g=2,f=\log n$ and $c=q\varepsilon/2+o(1)$, we are finished with this part as well. Now let $|I_t| \in [n/2,n-n/\log n]$. We switch our focus to the set of uninformed vertices. Using again that $e(U_t,I_t)\geq \varepsilon|U_t||I_t|\Delta_n/n+o(\Delta_n)|U_t|$, we have
\begin{align*}
\mathbb{E}_{t}[|U_{t+1}|] &= \sum\limits_{u\in U_t}1-q\frac{|N(u)\cap I_t|}{|N(u)|}
= \sum\limits_{u\in U_t}1-q\frac{|N(u)\cap I_t|}{\Delta_n(1+o(1))} \\ &= 
|U_t|-\frac{q\cdot e(U_t,I_t)}{\Delta(1+o(1))}  =|U_t|-
 \frac{q\varepsilon{|U_t||I_t|\Delta_n}/{n}+o(\Delta_n)|U_t|}{\Delta_n(1+o(1))} \leq \left(1-\frac{q\varepsilon}{2}+ o(1)\right)|U_t|.
\end{align*}
Inductively we obtain for any integer $\tau \geq 1$ the bound $\mathbb{E}_{t}\left[|U_{t+\tau}|\right] \leq \left(1-q\varepsilon/2+ o(1)\right)^\tau|U_t|,$
and so for some $\tau := 2\log\log n/\log(1/(1-q\varepsilon/2+ o(1))) = o(\log n)$ we have 
$$\mathbb{E}_{t}\left[|U_{t+\tau}|\right] \leq |U_t|/\log^2n=o(n/\log n). $$
Hence, by Markov's inequality, $P_{t}[|U_{t+\tau}|\geq n/\log n] =  o(1).$

In order to show $c)$ let $|I_t|\in [n-n/\log n,n]$.
As for $q=1$ the term $1-q$ in Lemma \ref{expResPull_b} vanishes, we distinguish the cases $q=1$ and $q \neq 1$. We start with $q=1$. By induction, it follows that for any round $\tau >0$ and suitable $f=o(1)$, $$\mathbb{E}_{t}[|U_{t+\tau}|] \leq \left(f(n)\right)^{\tau}|U_t|. $$ 
We choose $\tau = \log_{1/f(n)}(n) = o(\log n)$, as $1/f=\omega(1)$.
Hence we obtain 
$\mathbb{E}_{t}[|U_{t+\tau}|]\leq |U_t|/n \leq {1}/{\log n}.$
Therefore we have $P_{t}[|U_{t+\tau}|\geq 1]\leq o(1)$ by Markov's inequality. 
For $q\neq 1$ we have by induction, for any number of rounds $\tau \ge 1$, $$\mathbb{E}_{t}[|U_{t+\tau}|] \leq \left(1-q+o(1)\right)^{\tau}|U_t|. $$ 
We choose $\tau = \log_{1/(1-q+o(1))}(n) =-\log n/\log(1-q) + o(\log n).$ Thus using Markov's inequality, analogously to the case $q=1$, we obtain the desired upper bound. 
\end{proof}

Note that for $q=1$ this already implies Theorems \ref{pullFast}, \ref{pullIsRobust}. This leaves the case for $q\neq 1$.
\begin{lemma}\label{pullLower}
Let $0< \varepsilon \le 1/2, q\in (0,1]$ and $\mathcal G=(G_n)_{n \in \N}$ be an expander sequence. Let $\tilde{\mathcal G}=(\tilde G_n)_{n \in \N}$ be such that each $\tilde G_n$ is obtained by deleting edges of $G_n$ such that each vertex keeps at least a $(1/2 + \varepsilon)$ fraction of its edges and abbreviate $I_t=I_t^{(pull)}$. Let further $q \in (0,1)$ and $ |I_t|\, \leq n/2$. Then for  $\tau = -\log n/\log\left(1-q\right)$ and all $c<1$ whp $|I_{t+c\tau}|<n$.
\end{lemma}
\begin{proof} We consider a modified  process in which vertices have a higher chance of getting informed. In particular, note that the probability that $u \in U_t$ gets informed is at most $q |N(u) \cap I_t|/|N(u)| \le q$ and that all these events are independent; now we assume that each such $u$ gets independently informed with probability exactly $q$. Then the runtime in this modified model constitutes a lower bound for the runtime in the original model.

Let $c<1,u\in U_t$ and $E_u$ be the event that $u$ does not get informed in $c\tau$ rounds in this  model. Thus
$$P[E_u]=(1-q)^{c\tau}=(1-q)^{-c\log n/\log(1-q) }={n^{-c}}=\omega\left(1/n\right) $$
and as the events $E_u$ are independent and $|U_t|=\Theta(n)$
\begin{align*}
P\left[\bigwedge\limits_{u \in U_t} \overline{E_{u}}\right] 
\le \prod\limits_{u \in U_t} P[\overline{E_{u}}]
 \leq \exp\left(- \sum\limits_{u \in U_t} P[E_{u}]\right)=o(1).
\end{align*}
\end{proof}

\label{proofs_sec}

\subsection{Proof of Theorem \ref{pushPullFast} ---  \pushpull is fast on expanders}
\label{push_pull_fast_subsec}
As we are now in the case without edge deletions, we begin with a lemma that determines the expected number of informed vertices in one round.  Intuitively we will show that \push and \pull do not interact badly and therefore \pushpull is given as a straightforward  combination of \push and \pull.  
\begin{lemma}
\label{expPushPull}
Let $\mathcal G$ be an expander sequence and abbreviate $I_t=I_t^{(pp)}$.
\begin{enumerate}[label={(\alph*)},ref={\thetheorem~(\alph*)}]
\itemsep0em 
\item Let $|I_t|\leq n/\log n$. Then
$\mathbb{E}_{t}[|I_{t+1}\setminus I_t|] = (2q+o(1))|I_t|. $\label{expPushPull_a}
\item Let $|U_t|\leq n/\log n$. Then
$\mathbb{E}_{t}[|U_{t+1}|] = (1+o(1))e^{-q}(1-q)|U_t|. $\label{expPushPull_b}
\end{enumerate}
\end{lemma}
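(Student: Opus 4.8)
The plan is to reduce both statements to a single exact expression for the probability that a fixed uninformed vertex stays uninformed, and then analyse it asymptotically in the two regimes. For $u\in U_t$ the two ways it can become informed are independent: it is informed by a \pull exactly if its own random choice lands on an informed neighbour and the corresponding transmission succeeds, which happens with probability $q|N(u)\cap I_t|/|N(u)|$ and depends only on $u$'s choice and coin; it is informed by a \push exactly if some $v\in N(u)\cap I_t$ selects $u$ and its transmission succeeds, an event depending only on the choices and coins of the informed neighbours. Since these involve disjoint sets of independent random variables, I would first record the product formula
\[
	P_t[u\in U_{t+1}]=\Big(1-q\tfrac{|N(u)\cap I_t|}{|N(u)|}\Big)\prod_{v\in N(u)\cap I_t}\Big(1-\tfrac{q}{|N(v)|}\Big).
\]
Writing $a_u:=|N(u)\cap I_t|$ and using that an expander sequence has $\delta_n=(1-o(1))\Delta_n\to\infty$, so that $|N(v)|=(1+o(1))\Delta_n$ for every $v$, everything below becomes a Taylor expansion of this one identity.

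For part (a) we are in the regime $|I_t|=o(n)$. By Lemma~\ref{expmix} (with $|U_t|=(1-o(1))n$) one gets $\sum_{u\in U_t}a_u=e(U_t,I_t)=(1+o(1))\Delta_n|I_t|$. The difficulty is that a few uninformed vertices may have $a_u$ of order $\Delta_n$; I would quarantine these using Lemma~\ref{fraction_b} with $A_n=I_t$, which yields a set of ``good'' vertices covering all but $o(|I_t|)$ of $U_t$ on which $a_u/|N(u)|=o(1)$ uniformly. The bad vertices each contribute at most $1$ to the expectation, hence $o(|I_t|)$ in total. For a good $u$, $a_u/\Delta_n=o(1)$ makes $\prod_{v}(1-q/|N(v)|)=1-(q+o(1))a_u/\Delta_n$, and multiplying by $1-qa_u/|N(u)|=1-(q+o(1))a_u/\Delta_n$ gives $P_t[u\in U_{t+1}]=1-(2q+o(1))a_u/\Delta_n$, i.e.\ an informing probability $(2q+o(1))a_u/\Delta_n$. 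Summing over good $u$ and using $\sum a_u=(1+o(1))\Delta_n|I_t|$ produces $\mathbb{E}_t[|I_{t+1}\setminus I_t|]=(2q+o(1))|I_t|$, as claimed.

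For part (b) we have $|U_t|=o(n)$, so now almost all neighbours are informed. Here I would apply Lemma~\ref{fraction_a} with $A_n=U_t$ to obtain a good set covering $(1-o(1))|U_t|$ of $U_t$ with $|N(u)\cap U_t|/|N(u)|=o(1)$, i.e.\ $a_u=(1-o(1))|N(u)|$; bad vertices again contribute $o(|U_t|)$. For good $u$ the \pull factor is $1-qa_u/|N(u)|=1-q+o(1)$, while $\sum_{v\in N(u)\cap I_t}q/|N(v)|=(q+o(1))a_u/\Delta_n=q+o(1)$ gives, via $\log(1-x)=-x+O(x^2)$ and $O(a_u/\Delta_n^2)=o(1)$, that the \push factor equals $e^{-q+o(1)}=(1+o(1))e^{-q}$. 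Their product is $(1+o(1))e^{-q}(1-q)$ for $q\in(0,1)$, and summing over the good vertices yields $\mathbb{E}_t[|U_{t+1}|]=(1+o(1))e^{-q}(1-q)|U_t|$; for $q=1$ the \pull factor alone is already $o(1)$, so $\mathbb{E}_t[|U_{t+1}|]=o(|U_t|)$.

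The main obstacle is not the algebra but the control of irregularity: the Taylor expansions are only legitimate on vertices whose neighbourhood meets $I_t$ (resp.\ $U_t$) in the expected proportion, and the whole argument hinges on Lemma~\ref{fraction} guaranteeing that all but an $o(1)$-fraction of the relevant vertices are of this good type, with a single rate $o(1)$ uniform over them. I expect the only point needing genuine care to be that the second-order terms (two pushes reaching the same vertex, or a vertex both pushed and pulling) are truly lower order; this is exactly what $a_u/\Delta_n=o(1)$ delivers in part (a), and what the boundedness of the product expansion delivers in part (b).
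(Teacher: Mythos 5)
Your proposal is correct, and the per-vertex product formula you start from is exactly the factorization the paper also exploits (independence of a vertex's own pull choice from the push choices of its informed neighbours). Part (b) of your argument is essentially the paper's proof: the paper likewise writes $P_t[A_u\cap B_u]=P_t[A_u]\cdot P_t[B_u]$ for the push/pull failure events, extracts a good set $U\subseteq U_t$ from Lemma \ref{fraction_a} with $|N(u)\cap I_t|=(1-o(1))|N(u)|$, and Taylor-expands, just as you do. Where you genuinely deviate is part (a). The paper proceeds by set-level inclusion--exclusion, $\mathbb{E}_t[|I_{t+1}\setminus I_t|]=\mathbb{E}_t[|I^{(pull)}_{t+1}\setminus I_t|]+\mathbb{E}_t[|I^{(push)}_{t+1}\setminus I_t|]-\mathbb{E}_t[|(I^{(pull)}_{t+1}\setminus I_t)\cap(I^{(push)}_{t+1}\setminus I_t)|]$: it computes the two single-protocol expectations separately (each $(q+o(1))|I_t|$ via Lemma \ref{expmix}) and then bounds the overlap by conditioning on the random set $I^{(push)}_{t+1}$ and applying Lemma \ref{fraction_a} to that set. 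You instead expand the survival probability of each uninformed vertex once and for all; on the good vertices supplied by Lemma \ref{fraction_b} the cross term of order $a_u^2/\Delta_n^2$ is automatically $o(a_u/\Delta_n)$, so the overlap correction never has to be isolated and you never need to invoke the fraction lemma for a random set. This buys a more uniform treatment of (a) and (b) and avoids the conditioning step; what the paper's route buys is that the constant $2q$ appears transparently as $q+q$ from the single-protocol expectations, which it has computed anyway and reuses. Two small points you should make explicit in a write-up: first, in (a) you need $\sum_{\mathrm{good}\ u}a_u=(1+o(1))\Delta_n|I_t|$ rather than the sum over all of $U_t$, which follows since the $o(|I_t|)$ bad vertices carry at most $o(|I_t|)\,\Delta_n$ edges into $I_t$; second, at $q=1$ your conclusion $\mathbb{E}_t[|U_{t+1}|]=o(|U_t|)$ is all one can prove, since the lemma's literal assertion degenerates to $0$ there --- but the paper's own proof has exactly the same feature, and downstream it only uses part (b) for $q\neq 1$, so this is a defect of the statement, not of your argument.
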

\begin{proof}
To begin with $a)$. The probability that  $v\in U_t$ gets informed by \pull  is $q|N(v)\cap I_t|/|N(v)|$. Thus, using Lemma \ref{expmix} 
\begin{equation}\label{basicEquPull}
\begin{aligned}
\mathbb{E}_{t}[|I_{t+1}^{(pull)}\backslash I_t|] &= \sum\limits_{u\in U_t}q\frac{|N(u)\cap I_t|}{|N(u)|} 
 = q\sum\limits_{u\in U_t}\frac{|N(u)\cap I_t|}{\Delta_n(1+o(1))}\\ 
& = (q+o(1))\frac{e(U_t,I_t)}{\Delta_n} 
= (q+o(1)) \frac{|U_t||I_t|\Delta_n/n+o(\Delta_n)|I_t|}{\Delta_n}.
\end{aligned}
\end{equation}
Since $|I_t| = o(n)$ we obtain that $|U_t| = (1-o(1))n$ and this expression simplifies to $(q + o(1))|I_t|$. The probability that  $v\in U_t$ gets informed by \push  is $1-\prod_{i\in N(v)\cap I_t}(1-1/|N(v)|)$. Using $e^{-1/n +o(1/n)}= 1-1/n, e^{-1/n}= 1-1/n + o(1/n),$ and $|I_t|=o(n)$ we obtain in a similar fashion 
\begin{equation}\label{basicEquPush}
\begin{aligned}
\mathbb{E}_t[|I_{t+1}^{(push)}\setminus I_t|]&= \sum\limits_{u\in U_t}1-\prod_{i\in N(u)\cap I_t}\left(1-\frac{q}{|N(i)|}\right)=\sum\limits_{u\in U_t}1-\exp\left(-(1 - o(1))\frac{q|N(u)\cap I_t|}{\Delta_n}\right)\\
&=q\sum\limits_{u\in U_t}\frac{|N(u)\cap I_t|}{\Delta_n(1+o(1))}=(q + o(1))|I_t|.
\end{aligned}
\end{equation}
 We express the expected number of vertices informed by \pushpull after one additional round in terms of the expected values we just calculated (\eqref{basicEquPull} and \eqref{basicEquPush}):
\begin{equation}\label{equPushPull}
\begin{aligned}
\mathbb{E}_{t}[|I_{t+1}\backslash I_t|]
&= \mathbb{E}_{t}\left[|I_{t+1}^{(pull)}\backslash I_t|\,+\,|I_{t+1}^{(push)}\backslash I_t|\,-\,|(I_{t+1}^{(push)}\backslash I_t)\cap (I_{t+1}^{(pull)}\backslash I_t)|\right] \\
&=(2q-o(1))|I_t|\,-\, \mathbb{E}_{t}\left[|(I_{t+1}^{(push)}\backslash I_t)\cap (I_{t+1}^{(pull)}\backslash I_t)|\right].
\end{aligned}
\end{equation}
Lemma \ref{fraction_a} gives a set $I\subseteq I_{t+1}^{(push)}, |I|=(1-o(1))|I_{t+1}^{(push)}|$, such that $|N(u)\cap I_{t+1}^{(push)}|=o(1)|N(u)|$ for all $u\in I.$ Since push and pull happen independently 
\begin{align*}
\mathbb{E}_{t}\left[|(I_{t+1}^{(pull)} \backslash I_t) \cap (I_{t+1}^{(push)} \backslash I_t)|~\big| ~I_{t+1}^{(push)} \right] &= 
\sum\limits_{u \in I_{t+1}^{(push)} \backslash I_t} P_{t}[u \in I_{t+1}^{(pull)} \backslash I_t]=
\sum\limits_{u \in I_{t+1}^{(push)}\backslash I_t}q\frac{|N(u)\cap I_t|}{|N(u)|} \\&\le
\sum\limits_{u \in I}q\frac{|N(u)\cap I_t|}{|N(u)|}+ \sum\limits_{u \in I_{t+1}^{(push)}\setminus I}q\frac{|N(u)\cap I_t|}{|N(u)|} .
\end{align*}
Using that $|N(u)\cap I_t| =o(|N(u)|)$ for all $u \in I$ we obtain
\begin{align*}
\mathbb{E}_{t}\left[|(I_{t+1}^{(pull)} \backslash I_t) \cap (I_{t+1}^{(push)} \backslash I_t)|\right] & \leq \mathbb{E}_t[ o(|I|) + |I_{t+1}^{(push)}\setminus I|]=o(|I_t|),
\end{align*}
as $|I|\le |I_{t+1}^{(push)}|\le 2|I_t|$ and $|I_{t+1}^{(push)}\setminus I|=o(|I_{t+1}^{(push)}|)=o(|I_t|)$.
Combining this with \eqref{equPushPull} we get $\mathbb{E}_{t}[|I_{t+1} \setminus I_t|] = (2q+o(1))|I_t|$, as claimed.

Next we show $b)$. 
Let $A_u$ be the event that an uninformed vertex $u$ does not get informed by the \push algorithm, let $B_u$ be the corresponding event for \pull. Then $A_u$ and $B_u$ are independent and $A_u\cap B_u$ is the event that $u$ does not get informed in the current round. We obtain 
\begin{equation*}
\begin{aligned}
 P_t[A_u] =\prod_{i\in N(u)\cap I_t}\left(1-\frac{q}{|N(i)|}\right)\le \left(1-\frac{q}{\Delta_n}\right)^{|N(u)\cap I_t|} \le 
\exp\left(-q\frac{|N(u)\cap I_t|}{\Delta_n}\right)=\exp\left(\frac{-q|N(u)\cap I_t|}{(1+o(1))|N(u)|}\right)
\end{aligned}
\end{equation*}
and
$$ P_t[B_u]= 1-\frac{q|N(u)\cap I_t|}{|N(u)|}.$$
According to Lemma \ref{fraction_a} there is a set $U\subseteq U_t, |U|=(1-o(1))|U_t|$ such that $|N(u)\cap I_t|=(1-o(1))|N(u)|$ for all $u\in U.$ As $P_t[A_u\cap B_u]\leq 1$ we get therefore
\begin{equation*}
\begin{aligned}
\mathbb{E}_{t}[|U_{t+1}|]=&\sum\limits_{u\in U_t} P_{t}[A_u\cap B_u]\le\sum\limits_{u\in U}P_{t}[A_u] \cdot P_{t}[B_u]+|U_t\setminus U|\le(1+o(1))e^{-q}(1-q) |U_t|.
\end{aligned}
\end{equation*}
For the lower bound we need to find a lower bound on the probability of a single uninformed vertex not getting informed in one round by \push. Indeed, for any $u \in U_t$ and sufficiently large $n$
\begin{equation}\label{pushLower}
\begin{aligned}
P_t[A_u]&=\prod\limits_{v\in N(u)\cap I_t}\left(1-\frac{q}{|N(v)|}\right)\geq \left(1-\frac{q}{\delta_n}\right)^{|N(u)\cap I_t|}
\geq e^{-q\Delta_n/\delta_n}.
\end{aligned}
\end{equation}
Combining this inequality with the trivial bound $P[B_u]\ge 1-q$, we get a lower bound on the expected number of uninformed vertices after one round using \pushpull: 
\[
\mathbb{E}_{t}[|U_{t+1}|]=\sum\limits_{u\in U_t} P_{t}[A_u\cap B_u]= \sum\limits_{u\in U_t} P_{t}[A_u]\cdot P_{t}[B_u]\geq e^{-q\Delta_n/\delta_n}(1-q) |U_t|= (1+o(1))e^{-q}(1-q) |U_t|.
\]
\end{proof}

Next we show upper and lower bounds that together with Lemma \ref{startup} imply Theorem \ref{pushPullFast}.
\begin{lemma}\label{PushPullUpper}
Let $\mathcal G$ be an expander sequence and abbreviate $I_t=I_t^{(pp)}$. Let $q \in (0,1]$. Then the following statements hold whp.
\begin{enumerate}
\itemsep0em 
\item Let $\sqrt{\log n}\leq|I_t|\leq n/\log n$. Then there are $\tau_1, \tau_2= \log_{1+2q} (n/|I_t|)+o(\log n)$ such that $|I_{t+\tau_2}|<n/\log n<|I_{t+\tau_1}|.$
\item Let $n/\log n\leq |I_t|\leq n -n/\log n$. Then there is $\tau = o(\log n)$ such that $|I_{t+\tau}|>n-n/\log n.$
\item Let $|I_t| \geq n - n/\log n.$
\begin{enumerate}
\item[1.] Case $q= 1$: Then there is $\tau = o(\log n)$ such that $|I_{t+\tau}|=n.$
\item[2.] Case $q\neq1$: Then there is $\tau \leq \log n/(q-\log\left(1-q\right))+o(\log n)$ such that $|I_{t+\tau}|=n$.
\end{enumerate}
\end{enumerate}
\end{lemma}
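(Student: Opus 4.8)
The plan is to follow the same three-phase template used for \pull in Lemma \ref{pullResUpper}, but with the \pull growth factor $1+q$ replaced by the \pushpull growth factor $1+2q$ and the final \pull shrinkage factor $1-q$ replaced by the \pushpull factor $e^{-q}(1-q)$. The three inputs I would rely on are Lemma \ref{expPushPull} (the one-round expectations), the variance bound $\mathrm{Var}_t[|I_{t+1}|]\le \mathbb{E}_t[|I_{t+1}|]$ from Lemma \ref{lugosiappl}, and the concentration templates of Examples \ref{concetrationRemark} and \ref{concetrationExample}. For part (a) the situation is in fact cleaner than in the \pull-with-deletions case, because here the graph is an expander with no deletions, so Lemma \ref{expPushPull_a} supplies the \emph{exact} multiplicative growth $\mathbb{E}_t[|I_{t+1}|]=(1+2q+o(1))|I_t|$ throughout the whole range $\sqrt{\log n}\le |I_t|\le n/\log n$. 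Combining this with the variance bound, I would apply Example \ref{concetrationExample} directly with $c=1+2q$ to conclude that whp there are $\tau_1,\tau_2=\log_{1+2q}(n/|I_t|)+o(\log n)$ with $|I_{t+\tau_2}|<n/\log n<|I_{t+\tau_1}|$.

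For part (b) the closed forms of Lemma \ref{expPushPull} do not apply, since in the range $n/\log n\le |I_t|\le n-n/\log n$ neither $|I_t|$ nor $|U_t|$ is small. Here I would recompute the one-round expectation from scratch using the Expander Mixing Lemma \ref{expmix}, bounding \pushpull below by its \pull component. For $|I_t|\in[n/\log n,n/2]$ one has $|U_t|\ge n/2$, hence $e(U_t,I_t)\ge(1/2+o(1))\Delta_n|I_t|$ and therefore $\mathbb{E}_t[|I_{t+1}\setminus I_t|]\ge(q/2+o(1))|I_t|$; Example \ref{concetrationRemark} with $f=\log n$, $g=2$ and $c=1+q/2$ then clears this sub-phase in $O(\log\log n)=o(\log n)$ rounds. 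For $|I_t|\in[n/2,n-n/\log n]$ I would switch to tracking $U_t$: now $|I_t|\ge n/2$ forces $\mathbb{E}_t[|U_{t+1}|]\le(1-q/2+o(1))|U_t|$, so iterating $O(\log\log n)$ times drives $\mathbb{E}_t[|U_{t+\tau}|]$ below $n/\log^2 n$ and Markov's inequality finishes this sub-phase in $o(\log n)$ rounds.

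For part (c) we have $|U_t|\le n/\log n$, so Lemma \ref{expPushPull_b} applies and yields $\mathbb{E}_t[|U_{t+1}|]=(1+o(1))e^{-q}(1-q)|U_t|$. When $q=1$ the factor is $o(1)$, so a choice of $\tau=\log_{1/f(n)}(n)=o(\log n)$ rounds pushes $\mathbb{E}_t[|U_{t+\tau}|]$ below $1/\log n$, and Markov gives $U_{t+\tau}=\emptyset$ whp. When $q\ne 1$ the factor $e^{-q}(1-q)$ is a constant in $(0,1)$, iterating gives $\mathbb{E}_t[|U_{t+\tau}|]\le(e^{-q}(1-q)+o(1))^{\tau}|U_t|$, and since $-\log\!\big(e^{-q}(1-q)\big)=q-\log(1-q)$, choosing $\tau=\log n/(q-\log(1-q))+o(\log n)$ drives the expectation below $1$; Markov again yields that all vertices are informed whp. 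Together with the first-phase bound this reproduces $c_{pp}(q)=1/\log(1+2q)+1/(q-\log(1-q))$.

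The only genuine subtlety is the middle phase (b): because the convenient closed forms of Lemma \ref{expPushPull} are stated only for $|I_t|\le n/\log n$ or $|U_t|\le n/\log n$, one must separately supply an Expander-Mixing-Lemma estimate there and invoke the concentration (or Markov) machinery only on ranges where the per-round multiplicative factor is bounded away from $1$. The delicate part---controlling the \push/\pull interaction so that \pushpull behaves as a clean additive combination of the two---has already been discharged in Lemma \ref{expPushPull}, so no new obstacle arises in this lemma.
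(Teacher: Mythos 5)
Your proposal is correct, and for parts (a) and (c.2) it coincides with the paper's proof (Lemma \ref{expPushPull} plus Example \ref{concetrationExample}, respectively iterating Lemma \ref{expPushPull_b} and applying Markov). Where you diverge is in parts (b) and (c.1): the paper dispatches both in one line via monotonicity, observing that $|I_t^{(pp)}|\geq|I_t^{(pull)}|$ under the natural coupling, so the corresponding statements of Lemma \ref{pullResUpper} (which covers the no-deletion case as $\varepsilon=1/2$) apply verbatim; you instead re-derive the middle phase from scratch with the Expander Mixing Lemma and Example \ref{concetrationRemark}, and handle $q=1$ through Lemma \ref{expPushPull_b}. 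Your computations are sound --- the estimates $\mathbb{E}_t[|I_{t+1}\setminus I_t|]\geq(q/2+o(1))|I_t|$ for $|I_t|\in[n/\log n,n/2]$ and $\mathbb{E}_t[|U_{t+1}|]\leq(1-q/2+o(1))|U_t|$ for $|I_t|\geq n/2$ are exactly what the paper proves inside Lemma \ref{pullResUpper}(b) --- so your route buys self-containedness at the cost of duplicating the \pull analysis, while the paper's domination argument is shorter and makes the structural point that \pushpull can only be faster than \pull. One small caution in your (c.1): Lemma \ref{expPushPull_b} read literally gives the factor $(1+o(1))e^{-q}(1-q)=0$ at $q=1$, which is not what its proof establishes; the correct reading (visible from the proof, and mirrored in how the paper treats Lemma \ref{expResPull_b} for \pull) is $\mathbb{E}_t[|U_{t+1}|]\leq\bigl(e^{-q}(1-q)+o(1)\bigr)|U_t|$, i.e.\ a factor $o(1)$ when $q=1$, which is indeed how you use it --- just make that interpretation explicit rather than implicit.
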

\begin{proof}
Since $|I_t|\geq |I^{(pull)}_t|$ the statements  $b)$ and $c)$ for $q=1$ follow immediately from Lemma \ref{pullResUpper}. To see $a)$, note that by using Lemma \ref{expPushPull} we get $\mathbb{E}_{t}[|I_{t+1}\backslash I_t|] = (2q+o(1))|I_t|$,  and proceeding as in Example \ref{concetrationExample}  implies the claim.
 
Finally we show $c)$ for $q\neq 1$. Let $|I_t|\, \geq n-n/\log n$. 
By Lemma \ref{expPushPull}, we obtain that for any $\tau \in \mathbb{N}$, $$\mathbb{E}_{t}[|U_{t+\tau}|] = \left( (1+o(1))e^{-q}(1-q)\right)^{\tau}|U_t|. $$ 
Thus we may choose $\tau =  \log n / (q-\log(1-q))+o(\log n)$
such that, say, $\mathbb{E}_{t}[|U_{t+\tau}|]\leq |U_t|/n \leq {1}/{\log n}.$
Thus $P_{t}[|U_{t+\tau}|\geq 1]\leq o(1)$ by Markov's inequality.
\end{proof}
Note that for $q=1$ this already implies Theorem \ref{pushPullFast}. This leaves the case for $q\neq 1$.
\begin{lemma}\label{PushPullLower}
Let $\mathcal G$ be an expander sequence and abbreviate $I_t=I_t^{(pp)}$, let $q \in (0,1)$ and $ |I_t|\, \leq n/2$. Then for  $\tau = \log n/(q-\log\left(1-q\right))$ and all $c<1$ whp $|I_{t+c\tau}|<n$.
\end{lemma}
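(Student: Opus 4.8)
The plan is to follow the blueprint of Lemma~\ref{pullLower}: establish a uniform per-round lower bound on the probability that a fixed uninformed vertex stays uninformed, iterate it over the whole phase, and then argue that with such a large per-vertex survival probability at least one of the $\Theta(n)$ uninformed vertices survives all $c\tau$ rounds, so that $|I_{t+c\tau}|<n$.

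First I would fix $u\in U_t$ and bound its one-round survival probability from below, uniformly over the current state $I_t$. Writing $A_u$ for the event that $u$ is not reached by a \push and $B_u$ for the event that $u$ does not pull successfully, these two events are independent, exactly as in the proof of Lemma~\ref{expPushPull_b}. There we already saw in~\eqref{pushLower} that $P_t[A_u]=\prod_{i\in N(u)\cap I_t}(1-q/|N(i)|)\ge (1-q/\delta_n)^{\Delta_n}\ge e^{-q\Delta_n/\delta_n}=(1+o(1))e^{-q}$, using $|N(u)\cap I_t|\le\Delta_n$, $|N(i)|\ge\delta_n$ and $\Delta_n/\delta_n=1+o(1)$; and trivially $P_t[B_u]=1-q|N(u)\cap I_t|/|N(u)|\ge 1-q$. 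Crucially, these bounds hold \emph{irrespective of} $|I_t|$ (unlike the exact asymptotics of Lemma~\ref{expPushPull_b}), so each uninformed vertex survives one round with probability at least $p_0:=(1+o(1))e^{-q}(1-q)$.

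Next I would iterate across rounds. Conditioning on $I_{t+s}$, the bound $P_{t+s}[u\in U_{t+s+1}]\ge p_0$ holds in every round, so multiplying the conditional probabilities gives, for $E_u$ the event that $u\in U_{t+c\tau}$, that $P[E_u]\ge p_0^{c\tau}$. Since $\log p_0=-(q-\log(1-q))+o(1)$ and $\tau=\log n/(q-\log(1-q))$, this evaluates to $p_0^{c\tau}=n^{-c+o(1)}=\omega(1/n)$ because $c<1$ is a fixed constant. As $|I_t|\le n/2$ forces $|U_t|=\Theta(n)$, we obtain $\sum_{u\in U_t}P[E_u]=\Omega(n)\cdot n^{-c+o(1)}=\omega(1)$; that is, the expected number of vertices surviving the whole phase tends to infinity.

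The final and hardest step is to turn this large expected number of survivors into the statement that whp at least one survivor exists, i.e.\ $P\big[\bigwedge_{u\in U_t}\overline{E_u}\big]=o(1)$. For \pull this was immediate because, given the state, the per-round informing events of distinct vertices are independent, so one may couple to a genuinely independent auxiliary process and write $P[\bigwedge\overline{E_u}]\le\prod P[\overline{E_u}]\le\exp(-\sum P[E_u])=o(1)$. For \pushpull this coupling is the delicate point, since the \push mechanism couples different uninformed vertices through their shared informed neighbours. The fact that rescues the argument is that these correlations have the favourable sign: a shared informed neighbour can push to at most one of $u,v$, so the \push-informing events form a balls-into-bins type family and are negatively correlated, while the \pull part is independent and independent of \push; hence in a single round the survival events satisfy $P_t[E_u\cap E_v]\le P_t[E_u]P_t[E_v]$. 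I would use this either to justify $P[\bigwedge_u\overline{E_u}]\le\prod_u P[\overline{E_u}]$ directly, or to run a second-moment argument on $N:=|U_{t+c\tau}|$: pairwise negative correlation yields $\textrm{Var}[N]\le\mathbb{E}[N]$, whence $P[N=0]\le \textrm{Var}[N]/\mathbb{E}[N]^2\le 1/\mathbb{E}[N]=o(1)$. The main obstacle is showing that the negative correlation persists when compounded over all $c\tau$ rounds and not merely within one round; I expect to handle this by an inductive, round-by-round coupling that maintains a subset relation with an independent dominating process.
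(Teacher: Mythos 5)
Your plan has the same skeleton as the paper's proof: a state-uniform per-round survival bound $(1-q)\bigl(1-q/\delta_n\bigr)^{\Delta_n}=(1-o(1))(1-q)e^{-q}$ for each uninformed vertex, iterated via the tower property to give $P[E_u]=n^{-c+o(1)}=\omega(1/n)$, followed by a comparison with a process that informs each uninformed vertex independently with the capped probability. The paper disposes of that comparison in one unproven sentence (``the runtime in this modified model constitutes a lower bound for the runtime in the original model''), so you have correctly located where all the weight sits; your first two steps are sound and match the paper.

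The gap is that both of your proposed ways to finish that third step would fail as stated. The ``inductive, round-by-round coupling that maintains a subset relation with an independent dominating process'' does not exist, and the culprit is precisely the negative correlation you invoke: if $u$ and $v$ have the same $d$ informed neighbours, each of degree $d$, then the probability that neither receives a successful push is $(1-2q/d)^d<(1-q/d)^{2d}$, so the increasing event ``at least one of $u,v$ gets informed this round'' is strictly \emph{more} likely under \pushpull than under independent informing with the same marginal caps; by Strassen's theorem no coupling can keep the real informed set inside the independent one, even for a single round. Negative correlation makes intersections (``all informed'') rarer than under independence, which is what you need, but it makes unions (``at least one informed'') more frequent, which is exactly what set-inclusion domination forbids. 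Your second-moment alternative instead needs $\mathrm{Cov}\bigl(\mathbb{1}_{E_u},\mathbb{1}_{E_v}\bigr)\le 0$ for the \emph{multi-round} survival events, and this does not follow from within-round negative association: across rounds, $u$ staying uninformed keeps the informed set smaller and thereby \emph{helps} $v$ survive, an effect of the opposite sign that you do not control. What does close the argument, using only your ingredients, is an induction over rounds on the probability itself rather than on the sets. Let $p_0$ be the uniform cap on the one-round informing probability and suppose inductively that, from any uninformed set $W$, all of $W$ gets informed within $s-1$ rounds with probability at most $\theta^{|W|}$, where $\theta:=1-(1-p_0)^{s-1}$. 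Writing $U'$ for the survivors of the first round, one has $\theta^{|U'|}=\prod_{u\in U}\bigl(\theta+(1-\theta)\mathbb{1}[u\text{ informed this round}]\bigr)$, a product of nonnegative increasing functions of the informing indicators over disjoint coordinates, so your within-round negative association bounds its expectation by $\prod_{u\in U}\bigl(\theta+(1-\theta)p_0\bigr)=\bigl(1-(1-p_0)^{s}\bigr)^{|U|}$. Hence $P\bigl[\text{all of }U_t\text{ informed within }c\tau\text{ rounds}\bigr]\le\bigl(1-(1-p_0)^{c\tau}\bigr)^{|U_t|}\le\exp\bigl(-\Theta(n)\,n^{-c+o(1)}\bigr)=o(1)$, which is the claim --- and is also what is needed to make the paper's one-line domination assertion rigorous.
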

\begin{proof}
We consider a modified  process in which vertices have a higher chance of getting informed. In particular, note that the probability that $u \in U_t$ gets informed by \pull is at most $q |N(u) \cap I_t|/|N(u)| \le q$ and that all these events are independent; according to \eqref{pushLower} the probability that $u \in U_t$ gets informed by \push is at most $1-e^{-q\Delta_n/\delta_n}$. Now we assume that each such $u$ gets independently informed with probability exactly $1-e^{-q\Delta_n/\delta_n}(1-q)$. Then the runtime in this modified model constitutes a lower bound for the runtime in the original model.
Let $u\in U_t$ and $E_u$ be the event that $u$ does not get informed in this modified model in $c\tau$ rounds. Thus for $c<1$,
$$P[E_u]\geq((1-q)e^{-q\Delta_n/\delta_n})^{c\tau} =\omega\left(n^{-1}\right)$$ 
and as the events $E_u$ are independent and $|U_t|=\Theta(n)$
\begin{align*}
P\left[\bigwedge\limits_{u \in U_t} \overline{E_{u}}\right] 
\le \prod\limits_{u \in U_t} P[\overline{E_{u}}]
 \leq \exp\left(- \sum\limits_{u \in U_t} P[E_{u}]\right)=o(1).
\end{align*}
\end{proof}

\subsection{Proof of Theorem \ref{firstPhasePushRobust} --- \push informs almost all vertices fast in spite of edge deletions}\label{push_robust_sec}


To shorten the notation let us call the setting with deleted edges  ``new model''  and the setting without  ``old model'', that is, the term new model corresponds to the graphs in $\tilde{\mathcal G}$, while old model refers to the (original) graphs in $\mathcal G$. 
We prove Lemma \ref{propositionFirstPhasePushRobust} that directly implies Theorem \ref{firstPhasePushRobust}.
We write $I_t=I_t^{(push)}$ throughout. 
\begin{lemma}
\label{propositionFirstPhasePushRobust}
Under the assumptions of Theorem \ref{firstPhasePushRobust} the following holds for the new model:
\begin{itemize}
\itemsep0em 
\item[a)] There are $\tau, \tilde \tau = \log_{1+q}(n) + o(\log n)$ such that whp $|I_{\tilde \tau}| < n/\log n < |I_{\tau}|$.
\item[b)] Assume $|I_t|  \geq n/\log n$. Then there is a $\tau=o(\log n)$ such that whp $|I_{t+\tau}| \geq n-n/\log n$.
\end{itemize}
\end{lemma}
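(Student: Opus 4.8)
The plan is to reduce both parts to the concentration machinery of Section~\ref{basiclemmas_sec} by pinning down the correct one-round drift of $|I_t|$ in the new model; the only real work is the growth phase, where the precise multiplicative rate $1+q$ must be recovered despite the deletions. For part~a) I would first estimate $\mathbb{E}_t[|I_{t+1}\setminus I_t|]$ for $\sqrt{\log n}\le|I_t|\le n/\log n$. The upper bound $\mathbb{E}_t[|I_{t+1}\setminus I_t|]\le q|I_t|$ is immediate, since each of the $|I_t|$ informed vertices pushes once and succeeds with probability $q$. For the matching lower bound I would count from the informed side: the expected number of successful pushes landing in $U_t$ equals $q\sum_{v\in I_t}|N(v)\cap U_t|/|N(v)|$, and Lemma~\ref{fraction_a} (applied with $A_n=I_t$, legitimate as $|I_t|=o(n)$) guarantees that all but $o(|I_t|)$ informed vertices have a $1-o(1)$ fraction of their neighbours in $U_t$, so this quantity is $(q-o(1))|I_t|$. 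Counting from the informed side is essential here, because counting from $U_t$ via $e_{\tilde G}(U_t,I_t)$ only recovers the rate up to the factor $\varepsilon$ of Lemma~\ref{linearFraction}, losing the sharp constant. Subtracting collisions then yields $\mathbb{E}_t[|I_{t+1}\setminus I_t|]=(q-o(1))|I_t|$, whence $\mathbb{E}_t[|I_{t+1}|]=(1+o(1))(1+q)|I_t|$. Combining Lemma~\ref{startup} (to reach $\sqrt{\log n}$ in $o(\log n)$ rounds) with Example~\ref{concetrationExample} applied with $c=1+q$ produces $\tau,\tilde\tau=\log_{1+q}(n)+o(\log n)$ satisfying $|I_{\tilde\tau}|<n/\log n<|I_\tau|$.

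The key estimate, and the step I expect to be the main obstacle, is the control of collisions in the growth phase --- the precise point where deletions could a priori spoil the rate, and morally the ``coupling to the no-deletion case''. Writing $s_w=\sum_{v\in N(w)\cap I_t}q/|N(v)|$ for $w\in U_t$, the expected number of successful pushes wasted on already-targeted vertices is at most $\sum_{w\in U_t}s_w^2$, using that distinct vertices push independently. Since all degrees in $\tilde G_n$ are $\Theta(\Delta_n)$, one has $s_w=\Theta(|N(w)\cap I_t|/\Delta_n)$, and Lemma~\ref{fraction_b} (with $A_n=I_t$) gives that all but $o(|I_t|)$ uninformed vertices satisfy $|N(w)\cap I_t|=o(\Delta_n)$, i.e.\ $s_w=o(1)$; for those $\sum s_w^2=o(1)\sum s_w=o(|I_t|)$, while the $o(|I_t|)$ exceptional vertices each have $s_w=O(1)$ and contribute $\sum s_w^2=o(|I_t|)$. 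Hence the collision loss is $o(|I_t|)$, as needed.

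For part~b) I would split the range $n/\log n\le|I_t|\le n-n/\log n$ into $|I_t|\le n/2$ and $|I_t|\ge n/2$ and show constant-factor progress in each, which suffices since it yields $O(\log\log n)=o(\log n)$ rounds. Using $1-\prod_{v\in N(w)\cap I_t}(1-q/|N(v)|)\ge 1-e^{-q|N(w)\cap I_t|/\Delta_n}\ge(1-1/e)\,q|N(w)\cap I_t|/\Delta_n$ --- the last step valid because $q|N(w)\cap I_t|/\Delta_n\le q\le1$, so no thresholding is needed --- and summing over $w\in U_t$ gives $\mathbb{E}_t[|I_{t+1}\setminus I_t|]\ge(1-1/e)q\,e_{\tilde G}(U_t,I_t)/\Delta_n$, which Lemma~\ref{expmix} together with Lemma~\ref{linearFraction} bounds below by $(1-1/e)q\varepsilon(1-o(1))|U_t||I_t|/n$. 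When $|I_t|\le n/2$ this is $\Omega(|I_t|)$, so Example~\ref{concetrationRemark} supplies a constant growth factor; when $|I_t|\ge n/2$ it is $\Omega(|U_t|)$, so $\mathbb{E}_t[|U_{t+1}|]\le(1-\Omega(1))|U_t|$ and $|U_t|$ decays geometrically down to $n/\log n$. The concentration in both cases comes from Lemma~\ref{lugosiappl} and Chebyshev (exactly as in Lemma~\ref{pullResUpper}), with a union bound over the $o(\log n)$ rounds, and part~a) together with part~b) yields Theorem~\ref{firstPhasePushRobust}.
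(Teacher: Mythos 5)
Your proposal is correct, but for part a) it takes a genuinely different route from the paper; part b) coincides with the paper's proof (split at $|I_t|=n/2$, lower-bound the drift by $q\,e_{\tilde G}(I_t,U_t)/\Theta(\Delta_n)$ via Lemmas \ref{expmix} and \ref{linearFraction}, then Example \ref{concetrationRemark} for the growth range and expectation decay plus Markov for the shrinking range; the paper uses $1-e^{-x}\ge x/2$ where you use $1-e^{-x}\ge(1-1/e)x$, both valid). For part a) the paper never computes the one-round drift in the deleted graph. Instead it first fixes $q=1$ and argues by contradiction that the near-doubling statement \eqref{increaseByFactor2} survives the deletions: it couples the pushes $X_v(\tilde G_n)$ and $X_v(G_n)$ so that each push of the new model is retained in the old model independently with probability $d_{\tilde G_n}(v)/d_{G_n}(v)\ge 1/2+\varepsilon$, and shows via binomial domination and \eqref{binomial_exceeds_expectation} that $\Omega(|I_t|)$ wasted pushes (the quantities $Y_t$, $Z_t$) in the new model would force $\Omega(|I_t|)$ wasted pushes in the old model with non-vanishing probability, contradicting Lemma \ref{increaseByFactor2lemma} from \cite{Panagiotou2015}; general $q$ is then recovered by a thinning and failed-rounds argument (Lemma \ref{increaseByFactor1+q_lemma}). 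You instead establish $\mathbb{E}_t[|I_{t+1}|]=(1+q+o(1))|I_t|$ directly in $\tilde G_n$: counting successful pushes from the informed side via Lemma \ref{fraction_a}, and controlling collisions by the Bonferroni bound $\sum_{w\in U_t}s_w^2=o(|I_t|)$, which works because all degrees of $\tilde G_n$ are $\Theta(\Delta_n)$ and Lemma \ref{fraction_b} leaves only $o(|I_t|)$ uninformed vertices with $s_w=\Omega(1)$; this then feeds into Lemma \ref{startup} and Example \ref{concetrationExample} with $c=1+q$. I see no gap in this: the inequality $1-\prod_i(1-p_i)\ge\sum_i p_i-\bigl(\sum_i p_i\bigr)^2/2$ only needs independence of distinct pushes, and your applications of Lemma \ref{fraction} to the random set $I_t$ of size $o(n)$ are exactly how the paper itself uses that lemma in Lemmas \ref{expResPull} and \ref{expPushPull}. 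As for what each approach buys: yours is self-contained, treats all $q\in(0,1]$ in one stroke, and is structurally parallel to the paper's \pull and \pushpull analyses; the paper's coupling reuses the old-model lemma as a black box, avoids any drift computation in the (possibly quite irregular) deleted graph, and isolates the local-resilience content --- deletions cannot increase the number of wasted pushes by more than a constant factor --- at the price of the contradiction/subsequence setup and a separate thinning step for $q<1$.
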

For the proof of Lemma \ref{propositionFirstPhasePushRobust} we will need the following statements, the first one taken from~\cite{Panagiotou2015}.
\begin{lemma}[Proof of Lemma 2.5 in \cite{Panagiotou2015}]
\label{increaseByFactor2lemma}
Consider the old model. Assume $|I_t| < n/\log n$ and $q=1$. Then   
\begin{align}
\label{increaseByFactor2}
P_{t}\big[|I_{t+1}|= |I_t| \, +\,  (1-o(1))|I_t|\big] = 1 - o(1).
\end{align}
\end{lemma}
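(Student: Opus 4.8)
The plan is to combine a hard deterministic upper bound with a first-moment (Markov) argument applied to the number of \emph{wasted} pushes. This has the advantage of working uniformly in $|I_t|$, even when $|I_t|$ is small, where a variance/Chebyshev argument would be too weak because the statement carries no lower bound on $|I_t|$.

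First I would record the trivial but crucial fact that in one round of \push with $q=1$ each informed vertex makes exactly one (always successful) push and hence informs at most one new vertex, so $|I_{t+1}\setminus I_t|\le |I_t|$ deterministically. It therefore suffices to establish a matching lower bound whp. To this end write $W := |I_t|-|I_{t+1}\setminus I_t|\ge 0$ for the number of wasted pushes, i.e.\ those landing on an already informed vertex together with the surplus incurred when several informed vertices target the same uninformed vertex. The goal reduces to showing $W=o(|I_t|)$ whp.

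Second, I would estimate $\mathbb{E}_t[W]$ through $\mathbb{E}_t[|I_{t+1}\setminus I_t|]$. The probability that a fixed $u\in U_t$ stays uninformed equals $\prod_{v\in N(u)\cap I_t}(1-1/|N(v)|)$, so that $\mathbb{E}_t[|I_{t+1}\setminus I_t|]=\sum_{u\in U_t}\bigl(1-\prod_{v\in N(u)\cap I_t}(1-1/|N(v)|)\bigr)$. Since $1-x\le e^{-x}$ and $\delta_n=(1-o(1))\Delta_n$, the surviving probability is at most $\exp(-(1-o(1))|N(u)\cap I_t|/\Delta_n)$. The elementary bound $1-e^{-y}\le y$ points the wrong way for a lower bound, so I would invoke Lemma~\ref{fraction_b} with $A_n=I_t$ (legitimate as $|I_t|<n/\log n=o(n)$): all but $o(|I_t|)$ of the uninformed vertices satisfy $|N(u)\cap I_t|/|N(u)|=o(1)$, and for those $1-e^{-y_u}=(1-o(1))y_u$ with $y_u=(1-o(1))|N(u)\cap I_t|/\Delta_n$. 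Summing over this typical set and using the Expander Mixing Lemma (Lemma~\ref{expmix}), which gives $e(U_t,I_t)=(1-o(1))\Delta_n|I_t|$ when $|I_t|=o(n)$, yields $\mathbb{E}_t[|I_{t+1}\setminus I_t|]\ge(1-o(1))e(U_t,I_t)/\Delta_n=(1-o(1))|I_t|$. Together with the deterministic upper bound this pins down $\mathbb{E}_t[|I_{t+1}\setminus I_t|]=(1-o(1))|I_t|$, i.e.\ $\mathbb{E}_t[W]=o(|I_t|)$.

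Finally, for concentration I would write $\mathbb{E}_t[W]\le g(n)|I_t|$ with $g=o(1)$ and apply Markov's inequality to the nonnegative variable $W$ at threshold $\sqrt{g(n)}\,|I_t|$, obtaining $P_t[W\ge\sqrt{g(n)}\,|I_t|]\le\sqrt{g(n)}=o(1)$; hence whp $W=o(|I_t|)$ and $|I_{t+1}|=|I_t|+(1-o(1))|I_t|$. \textbf{The main obstacle} is the expectation lower bound: the naive estimate $1-e^{-y}\le y$ is in the wrong direction, and one must first discard the $o(|I_t|)$ atypical uninformed vertices via Lemma~\ref{fraction_b} so that the linear approximation $1-e^{-y}=(1-o(1))y$ becomes available, and only then apply the Expander Mixing Lemma to evaluate $e(U_t,I_t)$. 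I would emphasise using Markov on $W$ rather than the self-bounding variance bound of Lemma~\ref{lugosiappl}, since the latter route through Chebyshev would require $|I_t|\to\infty$, whereas the present argument handles all admissible $|I_t|$, including constant size.
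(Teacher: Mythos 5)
Your proposal is correct, but there is nothing internal to compare it against: the paper never proves Lemma \ref{increaseByFactor2lemma}, it imports the statement wholesale from the proof of Lemma 2.5 in \cite{Panagiotou2015} (the bracketed annotation is the citation, not a proof). Your argument is a sound, self-contained substitute built from the paper's own toolbox, and every step checks out: the deterministic bound $|I_{t+1}\setminus I_t|\le |I_t|$ for $q=1$; the identity $\mathbb{E}_t[|I_{t+1}\setminus I_t|]=\sum_{u\in U_t}\bigl(1-\prod_{v\in N(u)\cap I_t}(1-1/|N(v)|)\bigr)$; discarding the $o(|I_t|)$ atypical uninformed vertices via Lemma \ref{fraction_b} (which applies to the old model by taking the trivial edge deletion, and whose proof gives the bound uniformly over all admissible sets, as the paper itself implicitly requires in its other applications) so that the linearization $1-e^{-y}\ge(1-o(1))y$ is available; the count $e(U_t,I_t)=(1-o(1))\Delta_n|I_t|$ from Lemma \ref{expmix} together with the observation that atypical vertices carry only $o(\Delta_n)|I_t|$ edges; and finally Markov's inequality on the nonnegative waste $W$. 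The comparison is instructive: the concentration route that the cited work and this paper otherwise favour (expectation plus variance, e.g.\ Lemma \ref{lugosiappl} with Chebyshev) controls two-sided deviations and therefore needs $|I_t|\to\infty$, whereas you exploit that the deviation here is one-sided ($W\ge 0$ because of the deterministic cap), so a first moment suffices; this makes the conclusion valid uniformly down to bounded $|I_t|$, which the lemma genuinely demands since it carries no lower bound on $|I_t|$. Your implicit handling of push collisions is also the right move: a naive second-moment count of colliding pairs can be as large as $\Theta(|I_t|)$ on sparse expanders, while bounding $P_t[u\in I_{t+1}]$ from below vertex by vertex sidesteps that issue entirely.
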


\begin{lemma}
\label{increaseByFactor1+q_lemma}
\label{final_step_lemma}
Consider \push on a sequence of graphs $(G_n)_{n \in \N}$,
where $G_n$ has $n$ vertices. Assume that $|I_t| =\omega(1)$ and that \eqref{increaseByFactor2} holds for $q=1$, that is, assume that $P_{t}\big[|I_{t+1}|= |I_t| \, +\,  (1-o(1))|I_t|\big] = 1 - o(1)$  for $q=1$.
Then for $q \in (0,1]$ 
\begin{equation}
\label{increaseByFactor1+q}
P_{t}\big[|I_{t+1}| = |I_t| \, +\, (q-o(1))|I_t|\big] = 1 - o(1).
\end{equation}
Moreover, assume that whenever $|I_t| < n/\log n$, for $q = 1$, \eqref{increaseByFactor2} holds. Then there are $\tau, \tilde \tau = \log_{1+q}(n) + o(\log n)$ such that whp
\begin{equation}
\label{final_step}
|I_{\tilde \tau}| < n/\log n < |I_{\tau}|.
\end{equation}
\end{lemma}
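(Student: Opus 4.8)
The plan is to first establish the single-round estimate \eqref{increaseByFactor1+q} by coupling the transmission-$q$ process with the transmission-$1$ (i.e.\ $q=1$) process, and then to bootstrap it to the multi-round statement \eqref{final_step} through the concentration machinery of Example~\ref{concetrationExample}.

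For \eqref{increaseByFactor1+q} I would couple the two processes so that in round $t+1$ every informed vertex selects the \emph{same} uniformly random neighbour in both, while a transmission in the $q$-process is retained independently with probability $q$; thus the successful $q$-transmissions form a random thinning of the $q=1$-transmissions. By hypothesis \eqref{increaseByFactor2} holds for $q=1$, so whp the $q=1$ round informs $(1-o(1))|I_t|$ new vertices out of the $|I_t|$ pushes. Consequently almost all pushes land on \emph{distinct} uninformed vertices, and the number of uninformed vertices receiving at least two pushes is $|I_t|-|I_{t+1}^{(1)}\setminus I_t| = o(|I_t|)$. For the lower bound I restrict to the $(1-o(1))|I_t|$ uninformed vertices hit by exactly one push: each becomes informed in the $q$-process precisely when its single push is retained, an independent event of probability $q$, so their number is $\mathrm{Bin}((1-o(1))|I_t|,q)$. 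For the upper bound, every newly informed vertex consumes at least one retained push, and the total number of retained pushes is $\mathrm{Bin}(|I_t|,q)$. Since $|I_t|=\omega(1)$, both binomials concentrate: by Chebyshev, with deviation $|I_t|^{2/3}$, each lies within $(q\pm o(1))|I_t|$ with probability $1-O(|I_t|^{-1/3})=1-o(1)$. This sandwiches $|I_{t+1}\setminus I_t|$ and yields \eqref{increaseByFactor1+q}.

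The main obstacle is making the collision bound quantitative enough: it is exactly the equality $(1-o(1))|I_t|$ in \eqref{increaseByFactor2} that forces the number of multiply-hit vertices to be $o(|I_t|)$, and this is what lets the lower-bound binomial start from $(1-o(1))|I_t|$ single-hit vertices rather than from a smaller, uncontrolled set. One must also be careful that the thinning is independent of the already-revealed neighbour choices, which the coupling guarantees by construction.

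For \eqref{final_step} I would first upgrade \eqref{increaseByFactor1+q} to an expectation estimate: combining the whp bound $|I_{t+1}|\in[(1+q-o(1))|I_t|,(1+q+o(1))|I_t|]$ with the deterministic bounds $|I_t|\le|I_{t+1}|\le 2|I_t|$ shows that the $o(1)$-probability failure event contributes only $o(|I_t|)$ to the mean, whence $\mathbb{E}_t[|I_{t+1}|]=(1+o(1))(1+q)|I_t|$ throughout the range $\omega(1)\le|I_t|\le n/\log n$. A startup phase of $o(\log n)$ rounds (Lemma~\ref{startup} in the expander setting of the application) reaches the first time $t_0$ with $|I_{t_0}|\ge\sqrt{\log n}$, where moreover $|I_{t_0}|\le 2\sqrt{\log n}$ since a push round at most doubles the informed set. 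From there Example~\ref{concetrationExample} with $c=1+q$ supplies $\tau_1,\tau_2=\log_{1+q}(n/|I_{t_0}|)+o(\log n)$, with $|\tau_1-\tau_2|=o(\log n)$, such that whp $|I_{t_0+\tau_1}|\le n/\log n\le|I_{t_0+\tau_2}|$. Since $\log_{1+q}|I_{t_0}|=O(\log\log n)=o(\log n)$ and $t_0=o(\log n)$, setting $\tilde\tau=t_0+\tau_1$ and $\tau=t_0+\tau_2$ gives $\tau,\tilde\tau=\log_{1+q}(n)+o(\log n)$ together with $|I_{\tilde\tau}|<n/\log n<|I_\tau|$, as required.
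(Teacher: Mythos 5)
Your proof is correct, and its two halves relate differently to the paper's. For the single-round estimate \eqref{increaseByFactor1+q} you argue essentially as the paper does: couple the $q$-process to the $q=1$ process through common neighbour choices, use \eqref{increaseByFactor2} to get $(1-o(1))|I_t|$ distinct uninformed targets, and let independent retention with probability $q$ plus binomial concentration finish the job (the paper quotes Chernoff where you use Chebyshev, and your explicit upper bound via retained pushes fills in what the paper dispatches in one line). For the multi-round statement \eqref{final_step}, however, you take a genuinely different route. The paper never passes to expectations: it declares a round \emph{failed} when \eqref{increaseByFactor1+q} does not hold, notes that failures occur independently with probability $o(1)$ so only $o(R)$ of $R$ rounds fail whp, and sandwiches the total growth between $(1+q-o(1))^{R-o(R)}$ and $(1+q+o(1))^{R-o(R)}2^{o(R)}$ using the deterministic fact that a \push round at most doubles $|I_t|$; choosing $R=\log_{1+q}(n)+o(\log n)$ in two ways yields both $\tau$ and $\tilde\tau$. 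You instead upgrade the whp bound to the conditional expectation estimate $\mathbb{E}_t[|I_{t+1}|]=(1+o(1))(1+q)|I_t|$ (using that same doubling bound, applied to the failure event) and then let the paper's concentration machinery --- the self-bounding variance bound of Lemma \ref{lugosiappl} combined with Proposition \ref{concentration} as packaged in Examples \ref{concetrationRemark} and \ref{concetrationExample} --- do the multi-round work. Both routes are sound: the paper's failed-round count is self-contained and needs only the whp estimate, while yours reuses existing infrastructure so the extension to many rounds comes essentially for free and delivers the upper and lower times symmetrically. In both treatments the startup to $\sqrt{\log n}$ informed vertices rests on Lemma \ref{startup}, i.e.\ on the expander setting of the application rather than on the lemma's bare hypothesis; you flag this explicitly, which is fair, since the paper's own proof relies on it in exactly the same way.
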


\begin{proof}
For a graph $G$ and for $v \in I_t$ let $X_v(G)$ denote the vertex to which $v$ pushes in round $t$.
Let $$N_{t+1}:=\{X_v(G_n) \mid v \in I_t\}\cap U_t.$$
Note that whenever $|I_t| < n/\log n $ whp $|N_{t+1}| = (1 - o(1))|I_t|$
 from \eqref{increaseByFactor2}. For $q\in (0,1]$ each vertex in $N_{t+1}$ has a probability of at least $q$ to get informed and all these events are independent;  thus ~\eqref{increaseByFactor1+q} follows directly by applying the Chernoff bounds whenever $|I_t| = \omega(1)$. 
 
 In order to prove the second statement we call a round $t$ that does not satisfy \eqref{increaseByFactor1+q} a \emph{failed} round.
 Note that we just argued that the probability that a round fails is $o(1)$ whenever $|I_t| = \omega(1)$ and $|I_t| < n/\log n$, and the events that distinct rounds fail are independent. In particular, the number of failed rounds among the next $R$ rounds, assuming that $|I_t|$ stays below $n/\log n$, is whp $o(R)$. Moreover, if a round does not fail, the number of informed vertices increases by a factor of $(1 + q +o(1))$ and otherwise it may increase by an arbitrary factor in the interval $[1,2]$. Finally, Lemma \ref{startup} yields that there  is $t^*=o(\log n)$ such that whp $|I_{t^*}|=\omega(1)$, which implies that after $R + t^*$ rounds, the number of informed vertices is whp in the interval
 \[
 	[(1 + q +o(1))^{R - o(R)}, (1 + q +o(1))^{R - o(R)} \cdot 2^{o(R)}]
  \]
 and choosing $R = \log_{1+q}(n) + o(\log n)$ in two ways establishes~\eqref{final_step}.
\end{proof}
In the subsequent proof of Lemma \ref{propositionFirstPhasePushRobust} we will use the simple observations that for any $n\in\mathbb{N}_0$
\begin{equation}
\label{binomial_exceeds_expectation}
	P\left[\textrm{Bin}(n,1/2) \ge {n}/2\right] \ge 1/2
	~~\textrm{and}~~
	P\left[\textrm{Bin}(n,1/4) \ge {n}/4\right]
	\ge 1/4,
\end{equation}
see for example~\cite{greenberg2014tight} when $n > 4$, and the other cases are checked easily.
\begin{proof}[Proof of Lemma \ref{propositionFirstPhasePushRobust}]
We first show $a)$. 
We assume $q=1$ and prove that, for $|I_t|<n/\log n$, \eqref{increaseByFactor2} also holds in the new model; then claim $a)$  follows directly from Lemma \ref{increaseByFactor1+q_lemma}. Let $G=(V,E)$ be a graph. For $v \in I_t$ let $X_v(G)$ denote the vertex to which $v$ pushes in round $t$.
For $u\in V$ let $c_u(G):=|\{v \in I_t \mid X_v(G)=u\}|$ denote the number of times $u$ is pushed in round $t$.
Let $$\mathcal{Y}_t(G) := \{v \in I_t \mid c_v(G)=1\} \quad \text{ and }\quad \mathcal{H}_t(G):=\{v \in I_t \mid c_v(G)\geq 1\}$$ denote the set of informed vertices that are being pushed exactly once in round $t$ and 
the set of informed vertices that are being pushed at least once in round $t$ respectively. Let $$\mathcal{Z}_t(G):=\{v \in V \mid c_v(G)\geq 2\}$$ denote the set of vertices that are being pushed more than once in round $t$. 
Let $Y_t(G):=|\mathcal{Y}_t(G)|$ and
$H_t(G):=|\mathcal{H}_t(G)|$ and, in slight abuse of notation, let
 $Z_t(G):=\sum_{k \geq 2}(k-1)\cdot |\{v \in V \mid c_v(G)=k\}|$ denote the number of vertices that are being pushed multiple times in round $t$ counted with multiplicity. Note that the quantity $Y+Z$ denotes the number of pushes that have no effect in the respective round, i.e., there are $Y+Z$ pushes that are useless in the sense that even without them, the same number of vertices would become informed in the respective round.
 In the following paragraphs we condition on $I_t$ implicitly, that is, we write $P[\dots]$ instead of $P_{t}[\dots]$ etc.~to lighten the notation.
We want to show that (\ref{increaseByFactor2}) does hold in the new model; for contradiction we assume that this is not the case. Hence we can infer that there is a constant $c >0$ such that 
$$\limsup\limits_{n \ra \infty}P[Y_t(\tilde G_n)\geq c |I_t|]>0 \quad \text{ or } \quad \limsup\limits_{n \ra \infty} P[Z_t(\tilde G_n)\geq c |I_t|]>0.$$ 
Thus, w.l.o.g., we can assume that there is $f^*>0$ and $n_0 \in \N$ such that
$$P[Y_t(\tilde G_n)\geq c |I_t|]>f^* \text{ for all }n\geq n_0 \quad\text{ or }\quad P[Z_t(\tilde G_n)\geq c |I_t|]>f^* \text{ for all } n \geq n_0;$$
if this is not the case we can restrict ourselves to a suitable subsequence of $(n)_{n \in \N}$ on which it is true. Next, we describe an explicit coupling between the new and the old model. For any vertex $v$ consider $X_v(G_n)$. If $X_v(G_n) \in N_{\tilde G_n}(v)$, then set $X_v(\tilde G_n):=X_v(G_n)$ and otherwise choose $X_v(\tilde G_n)$ uniformly at random from $N_{\tilde G_n}(v)$. Note that $X_v(G_n),X_v(\tilde G_n)$ have by construction the correct marginal distribution. Moreover, note that by construction, the family
\begin{equation}
\label{eq:condIndep}
	\left( X_v(G_n) ~|~ (X_u(\tilde{G}_n))_{u \in V_n} \right)_{v \in V_n}
\end{equation}
of random variables is independent, since $X_v(G_n)$ depends only on $X_v(\tilde{G}_n)$ for all $v \in V_n$.

We begin with the case that $P[Y_t(\tilde G_n)\geq c |I_t|]>f^*$. We will show
\[
	P\big[H_t(G_n) \ge  Y_t(\tilde G_n)/2 ~|~ \mathcal{Y}_t(\tilde{G}_n) \big] \ge 1/2
\]
and then, since by assumption $P[Y_t(\tilde G_n) \geq c |I_t|]> f^*$, we can infer $P[H_t(G_n)\geq c|I_t|/2]\geq f^*/2$ which contradicts Lemma \ref{increaseByFactor2lemma}.
 Let $\mathcal{Y}_t(\tilde G_n) = \{y_1, \dots, y_{Y_t(\tilde G_n)}\}$, then there are distinct vertices $v_1, \dots, v_{Y_t(\tilde G_n)} \in I_t$ such that $X_{v_i}(\tilde{G}_n) = y_i$ for all $i\in \{1, \dots, Y_t(\tilde G_n)\}$. Due to~\eqref{eq:condIndep} the events $(\{X_{v_i}(G_n) = X_{v_i}(\tilde{G}_n)\})_{1 \le i \le Y_t}$ are independent. Moreover, for all $i\in \{1, \dots, Y_t(\tilde G_n)\}$,
\[
	P\big[X_{v_i}(G_n) = X_{v_i}(\tilde{G}_n) ~|~ \mathcal{Y}_t(\tilde G_n)\big]
	= \frac{d_{\tilde G_n}(v_i)}{d_{G_n}(v_i)} \ge 1/2 + \varepsilon
\]
and therefore, given $\mathcal{Y}_t(\tilde G_n)$, $H_t(G_n)$ dominates a binomially distributed random variable Bin$(Y_t(\tilde G_n),1/2)$.
In particular, this implies with~\eqref{binomial_exceeds_expectation} that $P[H_t(G_n) \ge  Y_t(\tilde G_n)/2 ~|~ \mathcal{Y}_t(\tilde{G}_n) ] \ge 1/2$, as claimed.

We continue with the case $P[Z_t(\tilde G_n)\geq c |I_t|]>f^*$.  Let $\mathcal{Z}_t(\tilde G_n)=\{z_1,\dots, z_{|\mathcal{Z}_t(\tilde G_n)|}\}$. Then, for any $i \in \{1,\dots, |\mathcal{Z}_t(\tilde G_n)|\}$ let $n_i:= c_{z_i}(\tilde G_n)\geq 2$, that is, there are distinct vertices $v_{i,1},\dots,v_{i,n_i}$ such that $X_{v}(\tilde G_n)=z_i$ for all $v \in \{v_{i,1},\dots,v_{i,n_i}\}$. We will show that 
\begin{equation}
\label{eq:Zcontr}
	P\big[Z_t(G_n)
			\geq
			Z_t(\tilde G_n)/8 \mid \mathcal Z_t(\tilde G_n), n_1, \dots, n_{|\mathcal{Z}_t(\tilde G_n)|}\big]
	\geq 1/8
\end{equation}
and then, since by assumption $P[Z_t(\tilde G_n)\geq c |I_t|]>f^*$, we obtain $P[Z_t(G_n)\geq c/8 |I_t|]\geq f^*/8$ which contradicts Lemma \ref{increaseByFactor2lemma}. Due to \eqref{eq:condIndep} the events 
\begin{align}
\label{independent_events_second_case}
\big(\{X_{v_{i,j}}(G_n)=X_{v_{i,j}}(\tilde G_n)\}\big)_{1\leq i \leq |\mathcal{Z}_t(\tilde G_n)|, 1 \leq j \leq n_i}
\end{align}
are independent. 
Moreover, for all $1\leq i \leq |\mathcal{Z}_t(\tilde G_n)|, 1 \leq j \leq n_i$,
\begin{align}
\label{same_push_second_case}
P\left [X_{v_{i,j}}(G_n)=X_{v_{i,j}}(\tilde G_n) \mid \mathcal{Z}_t(\tilde G_n), n_1, \dots, n_{|\mathcal{Z}_t(\tilde G_n)|}\right ] = \frac{d_{\tilde G_n}(v_{i,j})}{d_{G_n}(v_{i,j})} \geq 1/2 + \varepsilon.
\end{align}
For $1\leq i \leq |\mathcal{Z}_t(\tilde G_n)|$ let $B_i \sim$ Bin$(n_i,1/2)$ be independent random variables. Moreover, let $M_1:=\{i \mid 1\leq i \leq |\mathcal{Z}_t(\tilde G_n)|, n_i=2\}$ and  $M_2:=\{i \mid 1\leq i \leq |\mathcal{Z}_t(\tilde G_n)|, n_i>2\}$.
Using  \eqref{independent_events_second_case} and \eqref{same_push_second_case}, given $\mathcal{Z}_t(\tilde G_n)$, $n_1, \dots, n_{|\mathcal{Z}_t(\tilde G_n)|}$, we infer that $Z_t(G_n)$ dominates 
\begin{align*}
	\sum\limits_{i=1}^{|\mathcal{Z}_t(\tilde G_n)|}\max\{B_i-1,0\}
	\ge \sum_{i \in M_1} \max\{B_i-1,0\} + \sum_{i \in M_2}B_i - |M_2|.
\end{align*}
We treat the two sums individually. Note that $\sum_{i \in M_1} \max\{B_i-1,0\} \sim$ Bin$(|M_1|, 1/4)$;
in particular, $P[\sum_{i \in M_1}\max\{B_i-1,0\}\geq |M_1|/4]\geq 1/4$ by  \eqref{binomial_exceeds_expectation}.
Regarding the second sum, since $\sum_{i \in M_2} B_i \sim$ Bin$(\sum_{i \in M_2}n_i,1/2)$
we obtain $P[\sum_{i \in M_2} B_i \geq 1/2 \sum_{i \in M_2}n_i]\geq 1/2$.
Thus, given $\mathcal{Z}_t(\tilde G_n)$, $n_1, \dots, n_{|\mathcal{Z}_t(\tilde G_n)|}$ and using $2|M_1|= \sum_{i \in M_1}n_i$ and $\sum_{i \in M_2}n_i \geq 3|M_2|$,
we infer that with probability at least $1/4\cdot 1/2 = 1/8$
\begin{align*}
Z_t(G_n) &\geq \frac{1}{4}|M_1| + \frac{1}{2} \sum\limits_{i \in M_2} n_i  -|M_2| ~ = \frac{1}{8} \sum\limits_{i \in M_1}n_i + \frac{1}{2}\sum\limits_{i \in M_2}n_i - |M_2| ~ \geq \frac{1}{8}\sum\limits_{i \in M_1}n_i + \frac{1}{6} \sum\limits_{i \in M_2}n_i\\
&\geq \frac{1}{8} \sum\limits_{i=1}^{|\mathcal Z_t(\tilde G_n)|}n_i = \frac{1}{8} \left (Z_t(\tilde G_n)+|\mathcal{Z}_t(\tilde G_n)|\right )\geq \frac{1}{8}Z_t(\tilde G_n).
\end{align*}
This establishes~\eqref{eq:Zcontr}. All in all, for $q=1$ we have shown that \eqref{increaseByFactor2} does also hold in the new model.
Hence claim $a)$  follows directly from Lemma \ref{increaseByFactor1+q_lemma}.

Next we prove claim $b)$. We write $\Delta_n:=\Delta(G_n), \tilde \Delta_n:= \Delta(\tilde G_n), \delta_n:=\delta(G_n)$ and $\tilde \delta_n:=\delta (\tilde G_n)$; moreover we write $\tilde N(\cdot)$ instead of $N_{\tilde G_n}(\cdot)$. 
We assume that $|I_t| \in [n/\log n,n-n/\log n]$. We further distinguish two cases, namely $|I_t|\in [n/\log n,n/2]$ and $|I_t|\in [n/2,n-n/\log n]$. We start with the case~$|I_t|\in [n/\log n,n/2]$. Using Lemmas \ref{expmix} and \ref{linearFraction} and the assumption that $\Delta_n/\delta_n=1+o(1)$ we obtain, for any $0<\bar \varepsilon<\varepsilon/2$, for $n$ sufficiently large,
\begin{equation}
\label{bound_for_edges_when_I_t_is_small}
e(I_t,U_t) > \bar \varepsilon \delta_n |I_t|.
\end{equation}
Using that $e^x\geq (1+x/n)^n$ for $n\in \mathbb N$ and $|x|\leq n$ we obtain 
\begin{align*}
\mathbb{E}_{t}[|I_{t+1}\backslash I_t|] \geq \sum\limits_{u \in \tilde N(I_t)\backslash I_t}\left [ 1- \prod\limits_{v\in \tilde N(u) \cap I_t}\left(1- \frac{q}{\tilde \Delta_n}\right) \right ] \geq \sum\limits_{u \in \tilde N(I_t)\backslash I_t}1-e^{-|\tilde N(u)\cap I_t| q/\tilde \Delta_n}.
\end{align*}
Further, using that  $e^{-x}\leq 1-x/2$ for any $x\in(0,1)$ and \eqref{bound_for_edges_when_I_t_is_small} yields the bound
\begin{align*}
\mathbb{E}_{t}[|I_{t+1}\backslash I_t|] \geq \sum\limits_{u \in \tilde N(I_t) \backslash I_t} \frac{q|\tilde N(u)\cap I_t|}{2 \tilde \Delta_n} = \frac{q e(I_t,U_t)}{2 \tilde \Delta_n} \geq \frac{\bar \varepsilon q \delta_n}{2 \Delta_n}|I_t|.
\end{align*}
 For this case the claim follows by Example \ref{concetrationRemark}, when setting $f=n/\log n, g=\log n$ and $c=\bar \varepsilon q \delta_n/(2 \Delta_n)$.

Finally we consider the case $|I_t| \in [n/2,n-n/\log n]$; here we examine the shrinking of $U_t$. Using Lemmas \ref{expmix} and \ref{linearFraction} we obtain, for any $0<\bar \varepsilon < \varepsilon/2$, for $n$ sufficiently large, $e(I_t,U_t)> \bar \varepsilon \delta_n |U_t|.$
Hence, again using that for any $x\in(0,1)$ it holds $e^{-x}\leq 1-x/2$ and that for $n\in \mathbb N$ and $|x|\leq n$ it is $e^x\geq (1+x/n)^n$, we obtain
\begin{align*}
\mathbb{E}_{t}[|U_{t+1}|] &= \sum\limits_{u \in U_t} \prod\limits_{v \in \tilde N(u) \cap I_t}\left (1- \frac{q}{d_{\tilde G_n}(v)}\right ) \leq \sum\limits_{u \in U_t} e^{-|\tilde N(u)\cap I_t|q/ \tilde \Delta_n} \\ 
&\leq \sum\limits_{u \in U_t} 1- \frac{q|\tilde N(u)\cap I_t|}{2 \tilde \Delta_n} \leq |U_t| - \frac{\bar \varepsilon q \delta_n}{2 \tilde \Delta_n}|U_t|\leq  \left(1- \frac{\bar \varepsilon q\delta_n}{2 \Delta_n}\right)|U_t|. 
\end{align*}
Using the tower property of conditional expectation we immediately get
\begin{align*}
\mathbb{E}_{t}[|U_{t + \tau}|] \leq \left(1- \frac{\bar \varepsilon q \delta_n}{2 \Delta_n}\right)^{\tau}|U_t|, \quad \tau \in \N.
\end{align*}
Thus, for $\tau:= -2\log\log(n)/\log(1- \bar \varepsilon q\delta_n/(2 \Delta_n)) = o(\log n)$ we have $\mathbb{E}_t[|U_{t+\tau}|]=o(n/\log n)$.
Hence by Markov's inequality, $P[|U_{t+ \tau}|\geq n/\log n]=o(1)$.
\end{proof}

\subsection{Proof of Theorem \ref{lastPhasePushNotRobust} --- edge deletions slow down \push}\label{push_slow_sec}

Let $I_t^{(\push)} := I_t$. In order to show the claim we construct an explicit sequence of graphs that has the desired property. More precisely, for any $\varepsilon >0$, each $q \in (0,1]$ and $n \in \N$ we will define a graph $G_n(\varepsilon)$ that is obtained by deleting edges from the complete graph on $n$ vertices such that each vertex keeps at least an $(1-\varepsilon)$ fraction of its edges and such that \push slows down significantly. 

We define $G_n(\varepsilon)=(V_1 \cup V_2,E)$ with vertex set $V = V_1 \cup V_2$, where $V_1:=\{1,\dots,\lfloor n/2 \rfloor\}$ and $V_2:=\{\lfloor n/2 \rfloor+1, \dots, n\}$, as follows. We include in $E$ all pairs of vertices that intersect $V_1$ and moreover, we add edges (that now have endpoints only in $V_2$) such that all vertices in $V_2$ have degree $\lceil (1-\varepsilon)n\rceil +1 \pm 1$.
According to Lemma \ref{propositionFirstPhasePushRobust} $a)$ there is a $t=\log_{1+q}(n) + o(\log n)$ such that whp $|I_t| < n/\log n$. It thus suffices to show that it takes whp at least $(1+\varepsilon/2)q^{-1} \log n $ more rounds to inform all  remaining vertices.

Let $U_t' :=U_t^{(\push)}\cap V_2$.
As $|I_t|<n/\log n$ we have $|U_t'| \geq n/4 $ with plenty of room to spare. 
In the remainder of this proof we will consider a modified process in which vertices have a higher chance of getting informed; in particular we assume that in each round, \emph{all} vertices choose a neighbour independently and uniformly at random and after this round the chosen vertices are informed.
Let $E_u$ denote the event that $u \in U_t'$ does not get informed within the next $\tau := (1+\varepsilon/2)q^{-1} \log n$ rounds in this modified model. Each vertex $u \in U_t'$ has $\lfloor n/2 \rfloor$ neighbours that have degree $n-1$, at most $\lceil(1-\varepsilon)n\rceil+1 \pm 1 -\lfloor n/2\rfloor \le (1/2-\varepsilon)n+4$
neighbours that have at least degree $(1-\varepsilon)n$  and no further neighbours. 
Therefore, using that for any $a\in \R$ we have $(1+a/n)^n=e^{a}+\mathcal{O}(1/n)$, we obtain for each $u \in U_t'$
\begin{align*}
P_t[E_{u}]&\geq \left(\left(1- \frac{q}{n-1}\right)^{n/2}\left(1-\frac{q}{\left(1-\varepsilon\right)n}\right)^{\left(1/2 - \varepsilon\right)n+4}\right)^{\tau}
= 
(1+o(1))\left(e^{-q\left(1/2+(1/2-\varepsilon)/(1-\varepsilon)\right)}\right)^{\tau} \\
&= (1+o(1)) \exp\left({-\frac{4-4\varepsilon-3 \varepsilon^2}{4-4 \varepsilon}\log n}\right) =\omega(n^{-1}).
\end{align*}
In this modified model the events $\{\overline{E_{u}}\mid u\in U_t'\}$ also satisfy $P_t[\overline{E_{u}}\mid \{\overline{E_v}:v\in U\}]\leq 1-p$ for all $u\in V_2$ and $U\subseteq V\setminus \{u\}$ and for some $p =\omega(n^{-1})$. This follows immediately from the previous calculation, as conditioning on an event like ``$\{\overline{E_v}:v\in U\}$'' only decreases the number of vertices that can push to $u$. Thus as $|U_t'|=\Theta(n)$
\begin{align*}
P_t\left[\bigwedge\limits_{u \in U_t'} \overline{E_{u}}\right] 
\le \prod\limits_{u \in U_t'}(1-p)
 \leq \exp\left(- \sum\limits_{u \in U_t'} p\right)=o(1).
\end{align*}

\subsection{Proof of Theorems \ref{pushPullIsRobust},~\ref{firstPhasePushPullRobust} -- \pushpull informs almost all vertices fast in spite of edge deletions}
\label{push_pull_informs _almost_all nodes_fast}

Before we show the actual proof we will first present an informal argument that contains all relevant ideas and important observations. Let $\sqrt{\log n}\leq |I_t| \leq n/\log n$ and assume $q=1$. In Section \ref{push_robust_sec} we proved that for \push the informed vertices nearly double in every round for an arbitrary expander sequence with edge deletions and an otherwise arbitrary set $I_t$. For \pull this is not true; however, we proved in Section \ref{pull_fast_sec} that the number of edges between the informed and the uninformed vertices nearly doubles in every round. The first attempt towards the proof of Theorems \ref{pushPullIsRobust},~\ref{firstPhasePushPullRobust} then seems obvious: one would try to show that either the vertices triple every round, or the the edges do so, or for example that the product of the two quantities increases by a factor of 9. As it turns out, this is in general not the case; indeed, it is possible to choose an expander sequence, to delete edges such that each vertex keeps at least an ($1/2 + \varepsilon$)-fraction of its neighbors, and to choose a (large) set of informed vertices $I_t$ such that after one round whp either $|I_{t+1}| < c |I_t|$ or $e(I_{t+1}, U_{t+1}) < ce(I_{t}, U_{t})$ or $|I_{t+1}|e(I_{t+1}, U_{t+1}) < c^2|I_t|e(I_{t}, U_{t})$ for some $c < 3$. On the other hand and although we have no explicit description of these `malicious' sets, it seems rather unlikely that such sets will occur several times during the execution of \pushpull.


In order to show the claimed running time of \pushpull we will impose some additional structure. Let $\varepsilon > 0$. In the subsequent exposition we assume that our graph $G$ -- obtained from an expander by deleting edges such that each vertex keeps at least an $(1/2+\varepsilon)$ fraction of the edges -- has a \emph{very  special} structure. In particular, we assume that there is a partition $\Pi=(V_i)_{i\in [k]}$ of the vertex set of $G$ into a bounded number $k$ of equal parts such that $E_G(V_i) = \emptyset$ for all $1\le i \le k$ and such that  the induced subgraph $(V_i,V_j)$ looks like a random regular bipartite graph for all $1 \le i < j \le k$. Of course, not every relevant $G$ admits such a partition; however, Szemeredi's regularity lemma guarantees that every sufficiently large graph has a partition that is in a well-defined sense \emph{almost} like the one described previously, and a substantial part of our proof is concerned with showing that being `almost special' does not hurt significantly.

Assuming that $G$ is very special let us collect some easy facts. 
Denote the degree of  $u\in V_i$ in the induced subgraph $(V_i,V_j)$ with $d_{ij}$; this immediately  gives that $d_G(u)=\sum_{\ell=1}^kd_{i\ell}$, and note that $d_{ii}=0$ as there are no edges in $V_i$. Moreover, regular bipartite random graphs fulfil an expander property, that is, 
\[
	e(W_i,W_j) = d_{i,j}|W_i||W_j|/|V_j|+o(d_{i,j})|W_i| \approx |W_i||W_j|d_{ij}k/n
	\quad
	\text{for all $W_i\subseteq V_i, W_j\subseteq V_j$}, 1\le i < j \le k,
\]
where we used that all $|V_i|$'s are of equal size. 
This is quite similar to the property that we used in our preceding analysis on expander sequences, see Lemma \ref{expmix}. As a pair in $\Pi$ behaves like a bipartite expander sequence we can easily compute the expected number of informed vertices like we did in Section \ref{push_pull_fast_subsec}. We do so now for \pull. Let $\big|I_{t+1}^{i,j}\big|$ be the number of vertices in $V_i$ informed after round $t+1$ by \pull from vertices only in $V_j$ and set $I_t^i:=I_t\cap V_i, U_t^i:=U_t\cap V_i~\forall ~1\leq i\leq k$. Thus, as long as $I_t^i$ is much smaller than $V_i$ (and thus also $U_t^i \approx |V_i| = n/k$) we get
\begin{align*}
\mathbb{E}_t\left[\big|I_{t+1}^{(pull), i,j}\backslash I_t\big|\right]=\sum_{u\in U_t^i}\frac{|N(u)\cap I_t^j|}{d(u)}=\frac{e(U_t^i,I_t^j)}{\sum_{1\le \ell\le k}d_{i\ell}} \approx 
\frac{d_{ij}}{\sum_{1\le \ell\le k}d_{i\ell}}|I_t^j|.
\end{align*}
A similar calculation, which we don't perform in detail, yields for \push 
\begin{align*}
\mathbb{E}_t\left[\big|I_{t+1}^{(push), i,j}\backslash I_t\big|\right] \approx  \frac{d_{ij}}{\sum_{1\le \ell\le k}d_{\ell j}}|I_t^j|.
\end{align*}
Moreover, as in previous proofs it turns out that the number of vertices informed simultaneously  by \push as well as \pull is negligible, compare with the proof of Lemma \ref{expPushPull}. Thus we obtain that more or less
\begin{align*}
\mathbb{E}_t\left[\big|I_{t+1}^{(pp), i,j}\big|\right]
\approx
|I_t^i|+\left(\frac{d_{ij}}{\sum_{1\le \ell\le k}d_{i\ell}}+\frac{d_{ij}}{\sum_{1\le \ell\le k}d_{\ell j}} \right)|I_t^j|
\end{align*}
and by linearity of expectation
\begin{align*}
\mathbb{E}_t\left[\big|I_{t+1}^{(pp), i}\big|\right]
\approx 
|I_t^i|+ \sum_{1\le j\le k}\left(\frac{d_{ij}}{\sum_{1\le \ell\le k}d_{i\ell}}+\frac{d_{ij}}{\sum_{1\le \ell\le k}d_{\ell j}}\right)|I_t^j|.
\end{align*}
Set $X_t=(|I_t^i|)_{i\in [k]}$ and $A=(A_{ij})_{1\leq i,j\leq k}$, the matrix with entries
 $$A_{ij}=\frac{d_{ij}}{\sum_{1\le \ell\le k}d_{i\ell}}+\frac{d_{ij}}{\sum_{1\le \ell\le k}d_{\ell j}}\qquad \text{for }1 \le i \neq j \le k$$
and $A_{ii}=1$  for $1\leq i\leq k$. With this notation we obtain the recursive relation 
\begin{equation}
\label{eq:masterRec}
\mathbb{E}_t[X_{t+1}] \approx A\cdot X_t,
\end{equation}
that is, we may expect that $X_t \approx \mathbb{E}_t[X_{t}] \approx A^t X_0$. If we then denote by $\lambda_{\max}$ the greatest eigenvalue of $A$, then we obtain that in leading order
$$|I_t| \approx \lambda_{\max}^t.$$
Our aim is to show that \pushpull is (at least) as fast as on the complete graph, that is, $|I_t| \precsim 3^t$, and so  we take a closer look at the eigenvalues of $A$. By construction $A$ is symmetric, so that the largest eigenvalue equals $\sup_{\|x\|=1}\|x^TAx\|$, and the simple choice $x = k^{-1/2}\mathbf{1}$ yields
$$\lambda_{\max} \geq \frac{\sum_{(i,j)}A_{i,j}}{k}= \frac{\sum_{j=1}^k 1+\sum_{i=1}^k\sum_{j=1}^kd_{ij}/\left(\sum_{\ell=1}^kd_{i\ell}\right)+\sum_{j=1}^k\sum_{i=1}^k d_{ij}/\left(\sum_{\ell=1}^kd_{\ell j}\right)}{k}=3.$$ 
This neat property leads us to the expected result $T_{pp}(G) = (1+o(1))\log_{\lambda_{\max}}n \le (1+o(1))\log_3n$, and it also completes the informal argument that justifies the claim made in Theorems \ref{pushPullIsRobust} and ~\ref{firstPhasePushPullRobust}. In the rest of this section we will turn this argument step by step into a formal proof by filling in all missing pieces.

\paragraph{Obtaining an Appropriate  Regular Partition}

An important ingredient in the previous sketch was the assumption that the given graph has a partition into a bounded number of equal parts, such that the bipartite graph induced by any two different parts looks like a random regular graph. This assumption is quite strong and very much not true in general. However, restricting ourselves to dense graphs we can actually come quite close to that. Let us begin with some definitions; the statements are taken from~\cite{rodl2010regularity}. 
\begin{definition}[Density]
Given a graph $G=(V,E)$ and two disjoint non-empty sets of vertices $X,Y\subseteq V$, we define the \emph{density} of the pair $(X,Y)$ as
$$d_G(X,Y)=\frac{e_G(X,Y)}{|X||Y|}.$$
\end{definition}
As usual, if the graph is clear from the context the index will be omitted. The next definition gives a partition that is close to the previously described properties; all sets in the partition have nearly the same size and nearly all pairs behave in a well-defined sense like regular bipartite random graphs.   
\begin{definition}[$(\varepsilon,k_0,K_0)$-Szemer\'edi partition]\label{SzPartition}
Let $G=(V,E)$ and $k\in \mathbb{N}$. We call  $\Pi=\{V_i\}_{i\in[k]}$ an \emph{$(\varepsilon,k_0,K_0)$-Szemerd\'edi partition} of $G$ if the following conditions are fulfilled.
\begin{enumerate}
\itemsep0em 
\item $V_1\dot\cup\dots\dot\cup V_k=V$,
\item $k_0\leq k\leq K_0$,
\item $|V_1| \leq \dots\leq |V_k| \leq |V_1|+1$,
\item for all but at most $\varepsilon k^2$ pairs $(V_i,V_j)$ of $\Pi$ with $i<j$ we have that for all subsets $U_i\subseteq V_i$ and $U_j\subseteq V_j$ with $|U_i| \geq \varepsilon|V_i|$ and $|U_j| \geq\varepsilon|V_j|$
$$|d(U_i,U_j)-d(V_i,V_j)| \leq \varepsilon. $$
\end{enumerate}
A pair $(V_i,V_j)$ satisfying the last condition is called \emph{$\varepsilon$-regular}. For pairs $(V_i,V_j)$ in $\Pi$ we will abbreviate $d(V_i,V_j)$ with $d_{ij}$.
\end{definition}
Next we state Sz\'emeredis Regularity Lemma. It guarantees that we will have a Szemer\'edi partition if the underlying graph is large enough. 
\begin{lemma}[\cite{rodl2010regularity}, The Regularity Lemma]\label{Szemeredi}
For every $\varepsilon >$ 0 and every $k_0\in \mathbb{N}$ there exist $K_0 = K_0 (\varepsilon, k_0)$ and
$n_0$ such that every graph $G = (V, E)$ with at least $|V | = n \geq n_0$ vertices admits an \emph{$(\varepsilon,k_0,K_0)$-Szemerd\'edi partition}.
\end{lemma}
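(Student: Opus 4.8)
The plan is to prove Lemma~\ref{Szemeredi} by the classical energy-increment (or index-increment) argument, which is how the result is established in~\cite{rodl2010regularity}. The central idea is to attach to every partition $\mathcal{P}=\{V_1,\dots,V_k\}$ of $V$ a scalar potential, the \emph{index} (mean-square density)
\[
	\mathrm{ind}(\mathcal{P})=\sum_{1\le i<j\le k}\frac{|V_i|\,|V_j|}{n^2}\,d(V_i,V_j)^2,
\]
which always lies in $[0,1]$ since each $d(V_i,V_j)\in[0,1]$ and the weights $|V_i||V_j|/n^2$ sum to at most $1$. The entire proof then reduces to the tension between two facts: the index never decreases under refinement, but it must \emph{strictly} increase by a fixed amount whenever the current partition fails to be $\varepsilon$-regular. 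Since it is bounded above by $1$, only boundedly many such increments are possible, and the process must halt at a partition with few irregular pairs.

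First I would establish the two monotonicity facts, both via the Cauchy--Schwarz inequality. If $\mathcal{Q}$ refines $\mathcal{P}$, then applying Cauchy--Schwarz to the density of each pair $(V_i,V_j)$ against the sub-pairs into which it is split yields $\mathrm{ind}(\mathcal{Q})\ge \mathrm{ind}(\mathcal{P})$, so refining never loses energy. The quantitative heart of the argument is the \emph{defect} version: if a pair $(V_i,V_j)$ is \emph{not} $\varepsilon$-regular, witnessed by sets $U_i\subseteq V_i$, $U_j\subseteq V_j$ with $|U_i|\ge\varepsilon|V_i|$, $|U_j|\ge\varepsilon|V_j|$ and $|d(U_i,U_j)-d(V_i,V_j)|>\varepsilon$, then splitting $V_i$ into $\{U_i,\,V_i\setminus U_i\}$ and $V_j$ into $\{U_j,\,V_j\setminus U_j\}$ raises the contribution of that single pair to the index by at least $\varepsilon^4\,|V_i||V_j|/n^2$.

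Next comes the iteration. Starting from any equitable partition into $k_0$ parts, suppose the current partition $\mathcal{P}$ with $k$ parts has more than $\varepsilon k^2$ irregular pairs. For each irregular pair I would fix a witnessing pair of sets, and then refine \emph{every} part $V_i$ simultaneously using the common refinement of all witness sets that touch $V_i$. Summing the per-pair defect bound over the more than $\varepsilon k^2$ irregular pairs, each of weight roughly $1/k^2$, gives a refinement $\mathcal{Q}$ with $\mathrm{ind}(\mathcal{Q})\ge \mathrm{ind}(\mathcal{P})+\varepsilon^5$. Because the index starts at $\ge 0$, ends at $\le 1$, and jumps by at least $\varepsilon^5$ at each step, at most $\varepsilon^{-5}$ refinements occur. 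As each refinement multiplies the number of parts by at most $2^{k}$, iterating produces a tower-type bound $K_0=K_0(\varepsilon,k_0)$ on the final number of parts, and the process terminates at a partition with at most $\varepsilon k^2$ irregular pairs. The condition $n\ge n_0$ guarantees the parts stay large enough for the $|U_i|\ge\varepsilon|V_i|$ thresholds to be meaningful throughout.

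Finally I would restore the equitability requirement $|V_1|\le\cdots\le|V_k|\le|V_1|+1$: after each refinement the sub-parts may have unequal sizes, so I would regroup the atoms into $k$ blocks of sizes differing by at most one, distributing a bounded number of leftover vertices. One checks this rounding changes the index negligibly and converts only $o(k^2)$ additional pairs into irregular ones, which are absorbed into the $\varepsilon k^2$ budget. The main obstacle is precisely the defect Cauchy--Schwarz estimate together with the bookkeeping of the simultaneous refinement: one must verify that subdividing each $V_i$ by \emph{all} its witness sets at once still delivers the additive energy gain summed over all irregular pairs without double-counting, and that the number of parts remains controlled at each step so that the tower bound for $K_0$ is genuinely finite.
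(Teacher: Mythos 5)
Your proposal is a correct reconstruction of the classical energy-increment proof of the Regularity Lemma, which is exactly the argument in the reference \cite{rodl2010regularity} that the paper cites; the paper itself states Lemma~\ref{Szemeredi} without proof. All the essential ingredients are present and in the right order: boundedness and refinement-monotonicity of the index, the defect Cauchy--Schwarz gain of $\varepsilon^4|V_i||V_j|/n^2$ per irregular pair, the $\varepsilon^5$ increment per iteration giving at most $\varepsilon^{-5}$ rounds and the tower-type bound on $K_0$, and the re-equitabilization step.
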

The next lemma gives a useful property of regular pairs. In particular, with the exception of a small set only, all other vertices have a degree that is close to $dN$, where $d$ is the density of the pair and $N$ the number of vertices in each part. Actually, the statement also is true for arbitrary but not too small subsets of the parts.
\begin{lemma}{\label{fineRegularity}}
Let $G=(V,E)$ be a graph, $\varepsilon>0$ and $U, U'\subseteq V$. Suppose that $(U,U')$ is an $\varepsilon$-regular pair, and let  $W\subseteq U',|W|\geq \varepsilon |U'|$. Let furthermore $\mathcal{E}(U,W)\subseteq U$ be the largest set such that 
$|d(u,W)-d(U,U')|\geq \varepsilon$ for all $u\in \mathcal{E}(U,W)$.
Then $|\mathcal{E}(U,W)|\leq 2\varepsilon|U|$.
\end{lemma}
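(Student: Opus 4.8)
The plan is to decompose the exceptional set $\mathcal{E}(U,W)$ according to the \emph{sign} of the density deviation and to bound each piece separately by $\varepsilon|U|$; the two pieces together then account for the factor $2$ in the claimed bound $2\varepsilon|U|$. Concretely, I would set
\[
	\mathcal{E}^{+} := \{u\in U : d(u,W)\ge d(U,U')+\varepsilon\}
	\quad\text{and}\quad
	\mathcal{E}^{-} := \{u\in U : d(u,W)\le d(U,U')-\varepsilon\},
\]
so that $\mathcal{E}(U,W)=\mathcal{E}^{+}\,\dot\cup\,\mathcal{E}^{-}$ (the union is disjoint since $\varepsilon>0$), and it suffices to prove $|\mathcal{E}^{+}|\le\varepsilon|U|$ and, symmetrically, $|\mathcal{E}^{-}|\le\varepsilon|U|$.

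The one computational ingredient I would isolate first is the averaging identity: for every nonempty $X\subseteq U$,
\[
	d(X,W)=\frac{e(X,W)}{|X|\,|W|}=\frac{1}{|X|}\sum_{u\in X}\frac{e(\{u\},W)}{|W|}=\frac{1}{|X|}\sum_{u\in X}d(u,W),
\]
i.e. the density of $(X,W)$ is the average over $u\in X$ of the single-vertex densities $d(u,W)$. This is just a rearrangement of the edge count, but it is exactly what links the \emph{per-vertex} condition defining $\mathcal{E}^{\pm}$ to the \emph{pairwise} density that $\varepsilon$-regularity controls.

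With this in hand I would argue by contradiction for the high-density part. Suppose $|\mathcal{E}^{+}|>\varepsilon|U|$. Then $|\mathcal{E}^{+}|\ge\varepsilon|U|$ and, by hypothesis, $|W|\ge\varepsilon|U'|$, so the pair $(\mathcal{E}^{+},W)$ is an admissible pair of large subsets and $\varepsilon$-regularity of $(U,U')$ forces $d(\mathcal{E}^{+},W)\le d(U,U')+\varepsilon$. On the other hand, the averaging identity together with the defining inequality $d(u,W)\ge d(U,U')+\varepsilon$ for every $u\in\mathcal{E}^{+}$ yields $d(\mathcal{E}^{+},W)\ge d(U,U')+\varepsilon$, which is incompatible with the regularity bound. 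Hence $|\mathcal{E}^{+}|\le\varepsilon|U|$; running the identical argument with the reversed inequality gives $|\mathcal{E}^{-}|\le\varepsilon|U|$, and summing the two estimates completes the proof.

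The step I expect to require the most care is not conceptual but a boundary/strictness bookkeeping point: the paper's notion of $\varepsilon$-regularity and the definition of $\mathcal{E}(U,W)$ both use non-strict inequalities, so the contradiction above is, strictly speaking, an equality at the threshold rather than a strict violation. I would handle this either by taking the defining thresholds for $\mathcal{E}^{\pm}$ to be strict (which only shrinks $\mathcal{E}(U,W)$ and hence is harmless for an upper bound), or by noting that the hypothesis $|W|\ge\varepsilon|U'|$ is precisely what guarantees that each exceptional piece $\mathcal{E}^{+}$, $\mathcal{E}^{-}$ is itself a legitimate input to the regularity condition; the factor $2\varepsilon$ is exactly the slack one must allow to absorb the two-sided deviation at the boundary.
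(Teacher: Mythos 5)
Your proof is correct and is essentially the paper's own argument: the paper likewise splits $\mathcal{E}(U,W)$ by the sign of the deviation (into sets $S$ and $L$), uses the same averaging identity $d(X,W)=\frac{1}{|X|}\sum_{u\in X}d(u,W)$, and obtains the contradiction by feeding one piece together with $W$ into the $\varepsilon$-regularity condition; whether one argues by pigeonhole from $|\mathcal{E}(U,W)|\ge 2\varepsilon|U|$ (the paper) or bounds each piece by $\varepsilon|U|$ and sums (you) is immaterial. On the strictness point you raise, the paper adopts precisely your first fix, defining $S$ and $L$ with strict inequalities so that the averaged density strictly violates regularity --- though note that your parenthetical justification is stated backwards (making the thresholds strict shrinks the set being bounded, so it does not by itself yield an upper bound on the non-strictly defined $\mathcal{E}(U,W)$); this shared convention issue is nevertheless harmless, since the only use of the lemma, \eqref{fineRegularityappl}, requires just that vertices outside the exceptional set have small deviation, which holds under either convention.
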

\begin{proof}
We will prove this by contradiction. Assume that $|\mathcal{E}(U,W)|\geq 2\varepsilon|U|$. Let us write $\mathcal{E}(U,W) = S \cup L$, where $S = \{u \in {\cal E}(U,W): d(u,W)<d(U,U')-\varepsilon\}$ and $L = \{u \in {\cal E}(U,W): d(u,W)>d(U,U')+\varepsilon\}$. Then $|S| \ge \varepsilon |U|$ or $|L| \ge \varepsilon |U|$. In the former case
$$d\left(S,W\right)
=\frac{\sum_{u\in S}e(u,W)}{|S| \, |W|}
=\frac{\sum_{u\in S}d(u,W)}{|S| }<{d(U,U')-\varepsilon}.
$$
As $|S|\geq \varepsilon |U|, |W|  \ge \varepsilon |U'|$, this contradicts the assumption that $(U,U')$ is an $\varepsilon$-regular pair. The case $|L| \ge \varepsilon |U|$ follows analogouesly by showing that $d\left(L,W\right) > d(U,U')+\varepsilon$.
\end{proof}
We call the set $\mathcal{E}(U,W)$ in Lemma \ref{fineRegularity} the exceptional set of $U$ with respect to $W$. In particular Lemma \ref{fineRegularity} implies that for every $\varepsilon$-regular pair $(U,U')$ and \emph{all} $W\subseteq U', |W|\geq (1-c\varepsilon)|U'|,c>0$ we have
\begin{equation}\label{fineRegularityappl}
	|d(u,W)-d(U,U')|
	\le |d(u,W)-d(u,U')| + |d(u, U') - d(U,U')|
	\leq (c+1)\varepsilon~~\text{for all}~~u\in U\backslash \mathcal{E}(U,U').
\end{equation}
Having done these preparations we can now determine a partition that comes close to the initially described properties.
\begin{lemma} \label{Partition}
Consider the setting of Theorems \ref{pushPullIsRobust},~\ref{firstPhasePushPullRobust}. Then for all $\eta > 0$ and $k_0>1/\sqrt{\eta}$ there exists $n_0, K_0\in \mathbb{N}$ such that for all $\tilde{G}_n$ with $n\geq n_0$ there is a $(\eta,k_0,K_0)$-Szemer\'edi partition $\Pi=\{V_i\}_{i\in [k]}$ of~$\tilde{G}_n$ with the following property. There is  $F\subseteq \Pi$ with $|F|\leq \eta k $ such that for all $V_i\in \Pi\backslash F$
\begin{itemize}
\itemsep0em 
\item  there are at most $\eta k$ non-$\eta$-regular pairs $(V_i,V_j)$, $j \in [k]$, and
\item  there exists an exceptional set $N_i, |N_i|\leq  \eta |V_i| $ such that
 $$d(u)\leq (1+ \eta)\frac{n}{k}\sum_{1\le j\le k}d(V_i,V_j)  \qquad \text{for all $u\in V_i\backslash N_i$.}$$ 
\end{itemize}
\end{lemma}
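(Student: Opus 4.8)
The plan is to read the partition straight out of Szemer\'edi's regularity lemma (Lemma \ref{Szemeredi}) and then to carve out, part by part, the small exceptional sets that irregularity, within-part edges and local degree fluctuations force upon us. Concretely, I would first fix an auxiliary parameter $\varepsilon^\ast = \varepsilon^\ast(\eta) \le \eta$, chosen small enough (a suitable power of $\eta$) that the error terms below are controlled, and apply Lemma \ref{Szemeredi} with parameters $\varepsilon^\ast$ and $k_0$ to $\tilde G_n$; for $n \ge n_0$ this yields an $(\varepsilon^\ast,k_0,K_0)$-Szemer\'edi partition $\Pi=\{V_i\}_{i\in[k]}$. Since every $\varepsilon^\ast$-regular pair is also $\eta$-regular and the number of $\eta$-irregular pairs is at most $\varepsilon^\ast k^2 \le \eta k^2$, the partition $\Pi$ is in particular an $(\eta,k_0,K_0)$-partition, which is what the statement asks for.

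Next I would single out the bad family $F$ by a double-counting argument. Counting each irregular pair against both of the parts it contains, the total number of (part, irregular-pair) incidences is at most $2\varepsilon^\ast k^2$; hence the number of parts $V_i$ lying in more than $\eta k$ irregular pairs is at most $2\varepsilon^\ast k^2/(\eta k) = 2\varepsilon^\ast k/\eta$, which is $\le \eta k$ as soon as $\varepsilon^\ast \le \eta^2/2$. Taking $F$ to be exactly this set of parts gives $|F|\le \eta k$ and the first bullet for every $V_i \notin F$.

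The bulk of the work is the degree bound. Fix $V_i \notin F$. For each $j$ with $(V_i,V_j)$ regular, Lemma \ref{fineRegularity}, in the form \eqref{fineRegularityappl}, produces an exceptional set $\mathcal{E}(V_i,V_j)$ of size at most $2\varepsilon^\ast|V_i|$ outside of which $|N(u)\cap V_j| \le (d(V_i,V_j)+\varepsilon^\ast)|V_j|$. The naive union $\bigcup_j \mathcal{E}(V_i,V_j)$ is far too large, so instead I would let $N_i$ contain those $u$ that are exceptional for more than $\sqrt{\varepsilon^\ast}\,k$ of the regular pairs; since $\sum_j|\mathcal{E}(V_i,V_j)| \le 2\varepsilon^\ast k|V_i|$, Markov's inequality bounds this part of $N_i$ by $2\sqrt{\varepsilon^\ast}\,|V_i|$. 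For $u\notin N_i$, the regular non-exceptional pairs contribute at most $\sum_j (d(V_i,V_j)+\varepsilon^\ast)|V_j| = (1+o(1))\tfrac{n}{k}\sum_j d(V_i,V_j) + \varepsilon^\ast n$, while the at most $\eta k$ irregular pairs (using $V_i\notin F$) together with the at most $\sqrt{\varepsilon^\ast}\,k$ locally bad pairs contribute at most $(\eta+\sqrt{\varepsilon^\ast})(1+o(1))n$, each such pair being bounded trivially by $|V_j|=(1+o(1))n/k$.

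The remaining and genuinely delicate contribution comes from edges \emph{inside} $V_i$, about which regularity says nothing. Here I would exploit that the underlying $G_n$ is an expander: the expander mixing lemma (Lemma \ref{expmix}) gives $e_{G_n}(V_i) \le \tfrac{\Delta_n|V_i|^2}{2n}+o(\Delta_n)|V_i|$, so each part carries only $O(\Delta_n|V_i|/k)$ internal edges on average, and applying the same bound to the set of vertices of atypically large internal degree shows that only $o(|V_i|)$ of them carry enough internal degree to spoil the estimate; these few heavy vertices are added to $N_i$. The constraint $k_0 > 1/\sqrt{\eta}$ enters precisely at this point, forcing $|V_i|=(1+o(1))n/k < \sqrt{\eta}\,n$ and thereby keeping the absolute within-part contribution of lower order. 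Combining the three contributions, absorbing the $\varepsilon^\ast$- and $o(1)$-terms and the inter-part slack into the factor $(1+\eta)$, and choosing $\varepsilon^\ast$ small enough that the pieces of $N_i$ total at most $\eta|V_i|$, yields $d(u)\le (1+\eta)\tfrac{n}{k}\sum_{1\le j\le k} d(V_i,V_j)$ for all $u\in V_i\setminus N_i$. I expect the main obstacle to be exactly this last step: one must simultaneously control the \emph{collective} size of the exceptional sets (which is why $N_i$ is defined via ``exceptional for many pairs'' rather than a union bound) and the within-part edges (which carry no regularity and must be tamed through the expander mixing lemma and the smallness of the parts).
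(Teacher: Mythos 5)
Your overall architecture (apply the regularity lemma with an auxiliary parameter, extract $F$ by double counting over irregular pairs, build $N_i$ by a Markov argument over the exceptional sets) is the same as the paper's, but the concluding step --- ``absorbing'' all error terms into the factor $(1+\eta)$ --- has a genuine gap, and it is exactly where the paper has to work. The target bound is multiplicative: all errors must fit into a budget of $\eta\cdot\frac{n}{k}\sum_{j} d(V_i,V_j)$, and this main term is in general only of size about $\alpha n/2$ (e.g.\ when $G_n$ is $\alpha n$-regular, all densities are about $\alpha/2$), \emph{not} of size $n$. Two of your error terms are too large for this budget: (i) since you define $F$ by the threshold ``more than $\eta k$ irregular pairs'', a good part may still meet up to $\eta k$ irregular pairs, contributing up to $\eta k\cdot(n/k+1)\approx\eta n\approx\frac{2\eta}{\alpha}\cdot\frac{n}{k}\sum_j d_{ij}$, which alone exceeds the entire budget whenever $\alpha<2$, i.e.\ always; (ii) the within-part contribution is bounded only by $|V_i|\approx n/k<\sqrt{\eta}\,n$, which relative to the main term is of order $\sqrt{\eta}/\alpha\gg\eta$. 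Your expander-mixing refinement does not repair (ii): Markov against $e_{G_n}(V_i)\approx\Delta_n|V_i|/(2k)$ forces, if the heavy set is to have size at most $\eta|V_i|$, an internal-degree threshold of order $\Delta_n/(\eta k)$, and with only $k>1/\sqrt{\eta}$ available this is at least $\Delta_n/\sqrt{\eta}$ --- worse than the trivial bound $|N(u)\cap V_i|\le|V_i|$. Note also that you never invoke the hypothesis $\delta(G_n)\ge\alpha n$ at all; without it the statement cannot be proved, because regularity alone yields additive errors proportional to $n$, and only a pointwise lower bound on $d(u)$ allows one to convert those into a multiplicative loss.

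The paper's proof differs in precisely these two respects. First, it runs the entire argument with an auxiliary parameter $\xi$ chosen only at the end as a function of \emph{both} $\eta$ and $\alpha$: the thresholds are $\sqrt{\xi}k$ (both for membership in $F$ and for ``exceptional for many pairs''), and the regularity lemma is invoked with enough parts that $k>1/\sqrt{\xi}$ (enlarging $k_0$ internally is harmless), so that every bad contribution --- irregular pairs, multiply-exceptional pairs, and $V_i$ itself, the last via the trivial bound $|N(u)\cap V_i|\le n/k+1$ --- is $O(\sqrt{\xi})\,n$. Second, it uses the minimum degree to absorb: since $d(u)\ge\alpha n/2$, one has $5\sqrt{\xi}\,n\le 10\sqrt{\xi}\,d(u)/\alpha$, hence $d(u)\bigl(1-10\sqrt{\xi}/\alpha\bigr)\le\frac{n}{k}\sum_j d(V_i,V_j)$ after rearranging, and then $\xi$ is chosen so small that $(1-10\sqrt{\xi}/\alpha)^{-1}\le 1+\eta$. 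Your proof can be repaired along the same lines --- replace every $\eta$-threshold by a $\xi$-threshold with $\xi=\xi(\eta,\alpha)$ sufficiently small, take $k>1/\sqrt{\xi}$ parts, and add the rearrangement step using $d(u)\ge\alpha n/2$ --- but as written the final inequality $d(u)\le(1+\eta)\frac{n}{k}\sum_j d(V_i,V_j)$ does not follow.
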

\begin{proof}
According to Lemma \ref{Szemeredi}, for all $\xi > 0$ and $k_0>1/\sqrt{\xi}$, there are $n_0,K_0\in \mathbb{N}$ such that for all $\tilde{G}_n$ with $n\geq n_0$ there is a $k\in \mathbb{N}$ and a $(\xi,k_0,K_0)$-Szemer\'edi partition $\Pi=\{V_i\}_{i\in [k]}$ of $\tilde{G}_n$.
Let $F\subseteq \Pi$ contain the parts $V_i\in\Pi$ such that there are at least $\sqrt{\xi}k$ other parts $V_j\in \Pi$ such that the pair $(V_i,V_j)$ is not $\xi$-regular.
As there are at most $\xi k^2$ non $\xi$-regular pairs, we infer that $|F|\leq \sqrt{\xi}k$. Let $V_i\in \Pi\setminus F$.
Let further $A_i\subseteq \Pi$ be such that $(V_i,V_j)$ is a $\xi$-regular pair for all $V_j\in \Pi\setminus A_i$ and $(V_i,V_j)$ is not $\xi$-regular for all $V_j\in A_i$.
The definition of $F$ implies that $|A_i|\leq \sqrt{\xi}k$. For these $V_j\in\Pi\setminus A_i$ let $\mathcal{E}_i(V_j)=\mathcal{E}(V_i,V_j)$ be the exceptional set of $V_i$ with respect to $V_j$.
On top of that let $N_i\subseteq V_i$ be the set of points in $V_i$ that are in at least $\sqrt{\xi}k$ exceptional sets with respect to parts in $\Pi\setminus A_i$. As there are at most $k$ exceptional sets and by Lemma \ref{fineRegularity} each exceptional set has at most $2\xi|V_i|$ vertices, we get that $|N_i|\leq 2\sqrt{\xi}|V_i|$. Let $V_i\in\Pi\backslash F$, $u\in V_i\backslash N_i$ and let $B(u)\subseteq \Pi\backslash A_i$ be the set of parts such that $u\in \mathcal{E}_i(V_j)$ for all $V_j\in B$. Then $|B|\leq \sqrt{\xi}k$ and 
\begin{align*}
d(u)&=\sum_{1\le j\le k}|V_j| d(u,V_j) =\left( \sum_{V_j\in A_i\cup B} |V_j|d(u,V_j)+\sum_{V_j\in \Pi\setminus(A_i\cup B)} |V_j|d(u,V_j)  \right)
\\&\leq \biggl| N(u)\cap (\bigcup_{V_j\in A_i\cup B\cup \{V_i\}}V_j)\biggr|+\sum_{1\le j\le k}|V_j| \left(d(V_i,V_j)+\xi\right).
\end{align*}
By the definition of $F$ and as $u\in V_i\backslash N_i$ we get that $|\bigcup_{V_j\in A_i\cup B\cup \{V_i\}}V_j|\leq (\sqrt{\xi}k+\sqrt{\xi}k+1)(n/k+1)\leq 3\sqrt{\xi}n$. With that at hand and by using $d(u)\geq \alpha n/2$ and that the sizes of the parts in $\Pi$ differ by at most one we obtain
\begin{align*}
d(u)&\leq 3\sqrt{\xi}n+\frac{n}{k}\sum_{1\le j\le k} d(V_i,V_j)+2\xi n\leq \frac{n}{k}\sum_{1\le j\le k} d(V_i,V_j)+10\sqrt{\xi}d(u)/\alpha  .
\end{align*}
Let $\eta>0$. Choosing $\xi$ small enough such that $\max\{\xi, 2\sqrt{\xi}, 1/(1-10\sqrt{\xi}/\alpha)-1\}\leq \eta$ implies the claim.
\end{proof}

\paragraph{The Recursion Relation.} 
In this section we exploit the properties of the partition to study the expected number of informed vertices after one additional round; our aim is to establish a precise version of~\eqref{eq:masterRec}. In the remainder let $\|A\|_F = (\sum_{1\leq i\leq n}\sum_{1\leq j\leq n}|a_{i,j}|^2)^{1/2}$ denote the Frobenius norm of a matrix $A\in \mathbb{R}^{n\times n}$.

For the next lemma  consider the setting of Theorems \ref{pushPullIsRobust}, \ref{firstPhasePushPullRobust}, i.e., we are given an expander sequence $(G_n)_{n\in \mathbb{N}}$ with minimal degree $\delta_n\geq \alpha n$ for some $\alpha>0$ and an $\varepsilon>0$. We obtain a sequence of graphs $(\tilde{G}_n)_{n\in \mathbb{N}}$ by deleting up to a $1/2-\varepsilon$ fraction of the edges at each vertex in $G_n$. Let further $\eta >0, k_0\in\mathbb{N}$  and $\Pi=\{V_i\}_{i\in[k]}$ be the $(\eta,k_0,K_0)$-Sz\'emeredi partition of $\tilde{G}_n$ as given by Lemma \ref{Partition}. For that partition define $\mathcal{E}_{i,j}:=\mathcal{E}(V_i,V_j) $ as the exceptional set of $V_i$ with respect to $ V_j$ given by Lemma \ref{fineRegularity}, $i\neq j\in [k]$,  $F$ and $N_i$ as the exceptional sets from Lemma \ref{Partition}, $i \in \Pi\setminus F$. Moreover, let $\Pi_i = \{V_j \in \Pi \setminus F: (V_i,V_j) \text{ is $\eta$-regular}\}$ and note that
\[
	|\Pi_i| \ge (1-2\eta)k,
	|N_i|\le \eta |V_i|,
	|\mathcal{E}_{i,j}|\leq 2\eta |V_i|
	\quad
	\text{for all } i\in \Pi\setminus F, j\in\Pi_i.
\]
Finally, define
\[
	X_{t,i,j} = \bigl| I_t \cap \big(V_i \setminus (N_i \cup {\cal E}_{i,j})\big)\bigr|, \quad i\in \Pi \setminus F \text{ and } j\in\Pi_i
\]
and
\[
	X_{t,i} =\min_{j\in\Pi_i} X_{t,i,j}, \quad i\in \Pi \setminus F.
\]
This definition guarantees that $|I_t| \ge \| X_{t} \|_1$. The cornerstone of our proof is the following lemma, which bounds the growth of $X_t = (X_{t,i})_{i\in \Pi\setminus F}$ after one round.
\begin{lemma}\label{RegularRecursion}
Consider the situation as described above and assume additionally that $|X_{t,i}|\geq \log\log n$ for all $i\in\Pi\setminus F$ and that $|I_t| \leq n/\log n$. Then for all $\nu>0$ and $n$ large enough there exists a symmetric matrix~$A$ with biggest eigenvalue $\lambda_{\max}\geq 1+2q -\nu$ and an error matrix $\Delta A$ with $\|\Delta A\|_F\leq \nu$ such that whp
$$
X_{t+1} \geq (A+\Delta A)X_t. $$
\end{lemma}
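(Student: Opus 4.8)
The plan is to turn the heuristic $\mathbb{E}_t[X_{t+1}] \approx AX_t$ into a rigorous entrywise lower bound, where $A$ is the symmetric matrix indexed by $\Pi\setminus F$ with $A_{ii}=1$ and, writing $s_i := \sum_{1\le\ell\le k}d_{i\ell}$,
$$A_{ij} = q\Big(\frac{d_{ij}}{s_i}+\frac{d_{ij}}{s_j}\Big)\qquad(i\neq j),$$
and then to collect every approximation into $\Delta A$. Symmetry of $A$ is immediate from $d_{ij}=d_{ji}$ and $s_i=\sum_\ell d_{\ell i}$. For the eigenvalue bound I test the Rayleigh quotient at $x=|\Pi\setminus F|^{-1/2}\mathbf 1$: since $\sum_{j\in[k]}d_{ij}/s_i=1$ and $\sum_{i\in[k]}d_{ij}/s_j=1$, the full sum $\sum_{i,j\in[k]}A_{ij}$ equals $k(1+2q)$, and restricting the indices to $\Pi\setminus F$ discards only an $O(\eta)$ fraction because $|F|\le\eta k$; hence $\lambda_{\max}(A)\ge x^\top Ax\ge 1+2q-O(\eta)\ge 1+2q-\nu$ for $\eta$ small. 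A point I would stress already here is that the hypotheses $\delta_n\ge\alpha n$ and ``keep a $(1/2+\varepsilon)$-fraction of the edges'' force $d(u)=\Theta(n)$ and hence, via the degree estimate of Lemma~\ref{Partition}, $s_i=\Theta(k)$; consequently every off-diagonal entry obeys $A_{ij}=O(1/k)$. This normalisation is what will make all error matrices have bounded Frobenius norm.

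The core computation is a one-round lower bound on $\mathbb{E}_t[X_{t+1,i,j}]$ that is \emph{uniform in $j$}. Already-informed vertices give the term $X_{t,i,j}\ge X_{t,i}$, and for the newly informed ones I split the push- and pull-gains by the source part $V_{j'}$, $j'\in\Pi_i$. The crucial quantity is $e\big(V_i\setminus(N_i\cup\mathcal E_{i,j}),\,I_t^{j'}\big)$, which I bound from the \emph{source side} rather than by invoking regularity on the (possibly tiny) set $I_t^{j'}$: every informed $w\in I_t\cap\big(V_{j'}\setminus(N_{j'}\cup\mathcal E_{j',i})\big)$ has $|N(w)\cap V_i|\ge(d_{ij'}-\eta)(n/k)$ by definition of the exceptional set, and deleting $N_i\cup\mathcal E_{i,j}$ (of size $\le 3\eta(n/k)$) removes at most $3\eta(n/k)$ of these neighbours, so each such $w$ contributes $\ge(d_{ij'}-4\eta)(n/k)$; as there are $X_{t,j',i}\ge X_{t,j'}$ of them, $e(V_i\setminus(N_i\cup\mathcal E_{i,j}),I_t^{j'})\ge(d_{ij'}-4\eta)(n/k)X_{t,j'}$. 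Here the role of the minimum in $X_{t,i}=\min_{j}X_{t,i,j}$ becomes clear: it guarantees $X_{t,j'}\le X_{t,j',i}$, i.e.\ enough source vertices whose degree \emph{into $V_i$} is controlled. Combining with the upper degree bound $d(u)\le(1+\eta)(n/k)s_i$ from Lemma~\ref{Partition} yields a pull-gain $\ge(qd_{ij'}/s_i-O(\eta/k))X_{t,j'}$, and the symmetric estimate from the $w$-side (using $d(w)\le(1+\eta)(n/k)s_{j'}$) a push-gain $\ge(qd_{ij'}/s_{j'}-O(\eta/k))X_{t,j'}$; the per-entry slack is $O(\eta/k)$ precisely because $s_i=\Theta(k)$. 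Summing over $j'\in\Pi_i$, discarding the negligible simultaneous push-and-pull informings and push-collisions as in the proof of Lemma~\ref{expPushPull}, gives
$$\mathbb{E}_t[X_{t+1,i,j}]\ \ge\ X_{t,i}+\sum_{j'\in\Pi_i}\Big(A_{ij'}-O(\tfrac\eta k)\Big)X_{t,j'}-\mathrm{err}_i,$$
uniformly in $j$, where $\mathrm{err}_i$ collects the edges internal to $I_t$.

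For concentration I use that, by the remark following Lemma~\ref{appllugsi}, each $X_{t+1,i,j}=|I_{t+1}\cap(V_i\setminus(N_i\cup\mathcal E_{i,j}))|$ is self-bounding, so Lemma~\ref{lugosiappl} gives $\mathrm{Var}_t[X_{t+1,i,j}]\le\mathbb{E}_t[X_{t+1,i,j}]$; as $\mathbb{E}_t[X_{t+1,i,j}]\ge X_{t,i}\ge\log\log n\to\infty$, Chebyshev yields $X_{t+1,i,j}\ge(1-o(1))\mathbb{E}_t[X_{t+1,i,j}]$ with probability $1-O((\log\log n)^{-1/3})$, and a union bound over the at most $K_0^2$ pairs keeps this whp. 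Taking $\min_{j\in\Pi_i}$ and using the uniform-in-$j$ bound above, I read off $X_{t+1}\ge(A+\Delta A)X_t$ whp, where $\Delta A$ consists of (i) the restriction from $[k]$ to $\Pi_i$, nonzero on $\le\eta k^2$ entries each $O(1/k)$, of Frobenius norm $O(\sqrt\eta)$; (ii) the $O(\eta/k)$ per-entry slacks, of norm $O(\eta)$; and (iii) the $(1-o(1))$ concentration factor times $A$, of norm $o(1)\|A\|_F=o(1)$ since $\|A\|_F=O(\sqrt{K_0})$ is bounded. Choosing $\eta$ small and then $n$ large makes $\|\Delta A\|_F\le\nu$. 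I expect the genuinely delicate point to be the control of the term $\mathrm{err}_i$ coming from edges inside $I_t$ together with the loss incurred by excluding the exceptional sets: because $X_{t,i}$ may be as small as $\log\log n$ while $|I_t^i|$ is far larger, these must be shown to be absorbable into $\Delta A$ (equivalently, to be $o(X_{t,i})$ on average) rather than merely globally small, and this is exactly where the sparsity $|I_t|\le n/\log n$ and the regularity of the pairs in $\Pi_i$ have to be used most carefully.
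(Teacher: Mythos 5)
Your proposal is correct and follows essentially the same route as the paper's own proof: the same matrix $A$ (with the $O(\eta)$ corrections merely shifted into $\Delta A$), the same uniform-in-$j$ one-round expectation bound obtained from the degree estimate of Lemma~\ref{Partition} together with the exceptional-set estimate (your source-side edge count is precisely the paper's bound $|d(U_t^i\setminus\mathcal{H}_{i,j'},u)-d_{ij}|\le 6\eta$ for $u\in V_j\setminus\mathcal{E}_{j,i}$ from \eqref{fineRegularityappl}), the same self-bounding/Chebyshev concentration with a union bound over the boundedly many ($\le K_0^2$) pairs, and the same Rayleigh-quotient bound at the uniform vector for $\lambda_{\max}$. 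The ``delicate point'' you flag at the end is in fact already resolved by your own counting: since $k\le K_0$ is constant, $|I_t^i|\le n/\log n=o(n/k)$, so excluding informed vertices and exceptional sets on the receiving side costs only a multiplicative $(1-o(1)-O(\eta))$ factor, which is exactly how the paper absorbs it via $|U_t^i\setminus\mathcal{H}_{i,j'}|\ge(1-5\eta)n/k$.
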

\begin{proof}We set $I_t^{{\cal P},i}=I_t^{\cal P}\cap V_i,~U_t^{{\cal P},i}=U_t^{\cal P}\cap V_i$ for ${\cal P}\in \{push,pull, pp\}$ and let $$I_{t+1}^{{\cal P},i,j}\backslash I_t=\{u\in U_t\cap V_i\mid \text{there is } v\in I_t\cap V_j \text{ such that $u$ gets informed by $v$ using $\cal P$}\}$$ be the vertices in $V_i$ newly informed in round $t+1$  by operations involving only vertices from $V_i$ and $V_j$. Let $(i,j)\in\Pi\setminus F$. For all $u\in U_t^i$ we know that $d(u)\geq \alpha n/2 $. Moreover, $|I_t^i|\leq|I_t|\leq n/\log n$. Thus, the probability of $u\in U_t^i$ being informed by vertices in $I_t^j$ via \pull is 
${q|N(u)\cap I_t^j|}/{|N(u)|}=o(1)$. As the events of $u$ being informed by \push and \pull are independent  $P[u \in I_{t+1}^{(\push),i,j}\cap I_{t+1}^{(\pull),i,j}]=o(1)P[u \in I_{t+1}^{(\push),i,j}]$. Thus for any set $U\in V$
\begin{align}\label{combinePushPull}
\mathbb{E}\Big[\big|(I_{t+1}^{(pp),i,j}\setminus I_t)\cap U\big|\Big]
= (1-o(1))\left(\mathbb{E}\left[\bigl|(I_{t+1}^{(pull),i,j}\setminus I_t) \cap U\bigr|\right]+\mathbb{E}\left[\bigl|(I_{t+1}^{(push),i,j}\setminus I_t)\cap U\bigr|\right]\right).
\end{align}
Let $i\in \Pi\backslash F$ and $j\in \Pi_i$. We start by determining the expected number of vertices informed by \pull. Set further $D_i=(1+ \eta)\frac{n}{k}\sum_{1\le \ell \le k}d_{i\ell}$. According to Lemma \ref{Partition} all $v\in {U}^i_t\setminus N_i$ have degree less than $D_i$. Let $j' \in \Pi_i$  and set $\mathcal{H}_{i,j'}= N_i\cup \mathcal{E}_{i,j'}$. Then
\begin{equation*}
\begin{aligned}
\mathbb{E}_t\left[\bigl|I_{t+1}^{(\pull),i,j}\backslash (I_t\cup \mathcal{H}_{i,j'})\bigr|\right]
&=\sum_{u\in  U_t^i\setminus \mathcal{H}_{i,j'}}q\frac{|N(u)\cap I_t^j|}{|N(u)|} \geq q\frac{e(U_t^i\setminus \mathcal{H}_{i,j'},I_t^j)}{D_i}.
\end{aligned}
\end{equation*}
Since  $|I_t^i|\leq |I_t|\leq n/\log n$ we get with room to spare that $|{U_t^i}\setminus \mathcal{H}_{i,j'}|\geq (1-5\eta)n/k$ for $n$ large enough and all $j'\in \Pi_i$. Applying~\eqref{fineRegularityappl}, where we choose $W=U_t^i\setminus \mathcal{H}_{i,j'}$, yields $|d(U_t^i\setminus \mathcal{H}_{i,j'},u)-d_{ij}|\leq 6\eta$ for all $u\in V_j\setminus \mathcal{E}_{j,i}$. Thus 
\begin{align*}
\mathbb{E}_t\left[\bigl|I_{t+1}^{(\pull),i,j}\backslash (I_t\cup \mathcal{H}_{i,j'})\bigr|\right]
\geq q\frac{(d_{ij}-6\eta)|U_t^i\setminus \mathcal{H}_{i,j'}||I_t^j\backslash \mathcal{E}_{j,i}|}{D_i}\geq (1-5\eta)q\frac{(d_{ij}-6\eta)|I_t^j\backslash (\mathcal{E}_{j,i}\cup N_j)|}{D_ik/n}.
\end{align*}
As $D_i=(1+ \eta)n/k\sum_{1\le \ell \le k}d_{i\ell}$ we get for 
$$c_1:=(1-6\eta)(1+ \eta)^{-1}$$
with $X_{t,j,i} = |I_t^j\backslash (\mathcal{E}_{j,i}\cup N_j)|$ that
\begin{align}\label{pulleq}
\mathbb{E}_t\left[\bigl|I_{t+1}^{(\pull),i,j}\backslash (I_t\cup \mathcal{H}_{i,j'})\bigr|\right]\geq c_1 \cdot q\frac{(d_{ij}-6\eta)X_{t,j,i}}{\sum_{1\le \ell \le k}d_{i\ell}} \qquad \text{for all } i\in \Pi\setminus F \text{ and } j,j' \in \Pi_i.
\end{align}
We continue with \push. Let $i\in \Pi\backslash F$ and $j,j'\in \Pi_i$, and set (as before) $D_j=(1+ \eta)\frac{n}{k}\sum_{1\le \ell \le k}d_{\ell j}$ and $\mathcal{H}_{i,j'}= N_i\cup \mathcal{E}_{i,j'}$. Then 
\begin{align*}
\mathbb{E}_t\left[\bigl|I_{t+1}^{(\push),i,j}\backslash (I_t\cup \mathcal{H}_{i,j'})\bigr|\right]
&=\sum_{u\in U_t^i\setminus \mathcal{H}_{i,j'}}\Bigl(1-\prod_{v\in N(u)\cap I_t^j }\bigl(1-\frac{q}{|N(v)|}\bigr)\Bigr)
\end{align*}
According to Lemma \ref{Partition} all $v\in I_t^j\backslash N_j$ have degree less than $D_j$. Using the inequalities $(1-1/n)^n\leq e^{-1}$ and $e^{-1/n}= (1-1/n)+o(1/n)$ we obtain the estimate 
\begin{equation}\label{starstar}
\begin{aligned}
&\mathbb{E}_t\left[\bigl|I_{t+1}^{(\push),i,j}\backslash (I_t\cup \mathcal{H}_{i,j'})\bigr|\right]\geq
\sum_{u\in U_t^i\setminus \mathcal{H}_{i,j'}}\left(1-\Bigl(1-\frac{q}{D_j}\Bigr)^{|N(u)\cap (I_t^j\backslash N_j)|}\right)\\
\geq & 
\sum_{u\in U_t^i\setminus \mathcal{H}_{i,j'}}\left(1-\exp\Bigl(-q\frac{|N(u)\cap (I_t^j\backslash N_j)|}{D_j}\Bigr)\right)\geq(1-o(1))
\sum_{u\in U_t^i\setminus \mathcal{H}_{i,j'}}q\frac{|N(u)\cap (I_t^j\backslash N_j)|}{D_j}.
\end{aligned}
\end{equation}
The remaining steps are similar to the previously considered case of \pull.
By assumption we have that $|I_t^j\backslash \mathcal{H}_{j,i}|= X_{t,j,i}$ and as $|I_t^i|\leq |I_t|\leq n/\log n$ we obtain that $|U_t^i\backslash \mathcal{H}_{i,j'}|\geq (1-5\eta)n/k$ for $n$ large enough and all $j'\in \Pi_i$.  Using \eqref{fineRegularityappl} we obtain that $|d(U_t^i\setminus \mathcal{H}_{i,j'},u)-d_{ij}|\leq 6\eta$ for all $u\in V_j\setminus \mathcal{E}_{j,i}$. Thus
\begin{align*}
\mathbb{E}_t\left[\left|I_{t+1}^{(\push),i,j}\backslash (I_t\cup\mathcal{H}_{i,j'})\right|\right]&\geq (q-o(1)) \frac{e(U_t^i\setminus \mathcal{H}_{i,j'},I_t^j\backslash (N_j\cup \mathcal{E}_{j,i}))}{D_j}\geq (q-o(1))\frac{(d_{ij}-6\eta)|U_t^i\setminus \mathcal{H}_{i,j'}|X_{t,j,i}}{D_j}.
\end{align*}
Using that $D_j=(1+ \eta)n/k\sum_{1\le \ell \le k}d_{\ell j}$, we get for the same constant $c_1$ as in \eqref{pulleq} and $n$ large enough 
\begin{align}\label{pusheq}
\mathbb{E}_t\left[\left|I_{t+1}^{(\push),i,j}\backslash (I_t\cup \mathcal{H}_{i,j'})\right|\right]\geq c_1 \cdot q\frac{(d_{ij}-6\eta)X_{t,j,i}}{\sum_{1\le \ell \le k}d_{\ell j}}\qquad \text{for all } i\in \Pi\setminus F \text{ and } j,j' \in \Pi_i.
\end{align}
With \eqref{combinePushPull}, we can combine the results for \pull, \eqref{pulleq}, and \push, \eqref{pusheq}, to get for $c_2 :=  c_1 - \eta$
\begin{align}\label{pushpulleq}
\mathbb{E}_t\left[\left|I_{t+1}^{(pp),i,j}\backslash (I_t^i\cup \mathcal{H}_{i,j'})\right|\right]&\geq c_2 \cdot q
\left(\frac{d_{ij}-6\eta}{\sum_{1\le \ell \le k}d_{i\ell}}+\frac{d_{ij}-6\eta}{\sum_{1\le \ell \le k}d_{\ell j}}\right)X_{t,j,i}~~ \text{for all } i\in \Pi\setminus F,  j,j' \in \Pi_i.
\end{align}
Next we will show how we can exploit~\eqref{pushpulleq} to obtain (a lower bound for)  $\mathbb{E}_t[|(I_{t+1}^{(pp),i}\setminus I_t)|]$. Let $i\in\Pi\setminus F$ and $u\in U_t^i$. Using $e^{-1/n +o(1/n)}= 1-1/n, e^{-1/n}= 1-1/n + o(1/n),$ and $|I_t|=o(n)$ we obtain 
\begin{align*}
P_t[u\in I_{t+1}^{(push),i}\setminus I_t]&= 1-\prod_{i\in N(u)\cap I_t}\left(1-\frac{1}{|N(i)|}\right)=1-\exp\left(-(1 - o(1))\sum_{i\in N(u)\cap I_t}\frac{1}{|N(i)|}\right)\\
&=(1-o(1))\sum_{i\in N(u)\cap I_t}\frac{1}{|N(i)|}.
\end{align*}
Let $W\subseteq V$. Using \eqref{combinePushPull}, the previous equation and that $\Pi$ is a partition we get
 \begin{align*}
 \mathbb{E}_t[|(I_{t+1}^{(pp),i}\setminus I_t)\cap W|]&= (1-o(1))\sum_{u\in U_t^i\cap W}\left(\frac{|N(u)\cap I_t|}{|N(u)|}+\sum_{i\in N(u)\cap I_t}\frac{1}{|N(i)|}\right)
 \\&=(1-o(1))\sum_{u\in U_t^i\cap W}\left(\sum_{j\in [k]}\Biggl(\frac{|N(u)\cap I_t\cap V_j|}{|N(u)|}+\sum_{i\in N(u)\cap I_t\cap V_j}\frac{1}{|N(i)|}\Biggr)\right)\\
 &=(1-o(1))\sum_{j\in [k]} \mathbb{E}_t[|(I_{t+1}^{(pp),i,j}\setminus I_t)\cap W|].
 \end{align*}
Choose $W=V\setminus \mathcal{H}_{i,j'}$, then the previous equation implies
\begin{align*}
\mathbb{E}_t\left[\left|I_{t+1}^{(pp),i}\backslash (I_t\cup \mathcal{H}_{i,j'})\right|\right]
&\geq (1-o(1))
\sum_{j\in\Pi\setminus F}\mathbb{E}_t\left[\left|I_{t+1}^{(pp),i,j}\backslash (I_t\cup \mathcal{H}_{i,j'})\right|\right]
\quad
\text{for all } i\in\Pi\setminus F, j'\in \Pi_i,
\end{align*}
which in turn, using \eqref{pushpulleq} and $X_{t,j,i}\geq X_{t,j}$ for all $j\in\Pi\setminus F$ and $i\in \Pi_j$, implies for $c := c_2 - \eta$
\begin{align}\label{assumption}
\mathbb{E}_t\left[X_{t+1,i,j'}\right]&\geq X_{t,i}+ c \cdot q\sum_{j\in\Pi_i}
\left(\frac{d_{ij}-6\eta}{\sum_{1\le \ell \le k}d_{i\ell}}+\frac{d_{ij}-6\eta}{\sum_{1\le \ell \le k}d_{\ell j}}\right)X_{t,j} 
\quad
\text{for all } i\in\Pi\setminus F, j'\in \Pi_i.
\end{align}
Assume that $\eqref{assumption}$ does not only hold in expectation but also for a slightly smaller $c$, say $c - \eta$, with high probability. We are going to show this at the end of the proof. Using this assumption and a union bound over $j'\in\Pi_i$ gives whp 
\begin{align}\label{extend}
X_{t+1,i}=\min_{j'\in\Pi_i}X_{t+1,i,j'}\geq  \left\langle a_i, (X_{t,j})_{j\in\Pi_i}\right\rangle\quad \text{ for all }i\in\Pi\setminus F,
\end{align}
where for $i\in\Pi\setminus F$ and $j\in\Pi_i$ we have
\begin{equation}\label{aij}
\begin{aligned}
a_{ij}= \mathbb{1}[i = j] + c \cdot q\left(\frac{d_{ij}-6\eta}{\sum_{1\le \ell \le k}d_{i\ell}}+\frac{d_{ij}-6\eta}{\sum_{1\le \ell \le k}d_{\ell j}}\right).
\end{aligned}
\end{equation}
Let $A$ be the  $|\Pi\setminus F| \times |\Pi\setminus F|$ matrix with entries as in the previous equation, i.e., $A=(a_{ij})_{(i,j)\in(\Pi\setminus F)^2}$ is given by \eqref{aij} for all $(i,j)\in(\Pi\setminus F)^2$.
Note that $A$ is symmetric. Then we obtain from~\eqref{extend}
\begin{align*}
X_{t+1}\geq B\cdot X_t;
\end{align*}
with $B = A + \Delta A$, where
$$(\Delta A)_{ij}=\left\{
\begin{matrix}
0 ,&i\in \Pi\backslash F \text{ and } j\in \Pi_i\\
-a_{ij}, &i\in \Pi\backslash F \text{ and } j \in \Pi\setminus (F \cup \Pi_i)
\end{matrix}.
\right.$$
Set $F':=\{(i,j)\in (\Pi\setminus F )^2\mid j \in \Pi\setminus (F \cup \Pi_i) \}.$ As $d(u)\geq \alpha n/2$ for all $u\in V$ and some $\alpha>0$, we also know that $\sum_{1\le\ell\le k}d_{\ell j}\geq k\alpha/2 $. Together with $0\leq d_{i,j}\leq 1$ for all $(i,j)\in [k]^2$ we get that 
$|(d_{ij}-6\eta)/\sum_{1\le \ell\le k}d_{i\ell}|\le 2/(\alpha k)$. Using that $|F'|\leq 2\eta k^2$ we obtain 
\begin{align*}
\|\Delta A\|_F^2&=\sum_{(i,j)\in F'} a_{ij}^2\leq \sum_{(i,j)\in F'}\left(\frac{2}{\alpha k}\right)^2\leq 2\eta k^2\left(\frac{2}{\alpha k}\right)^2=\frac{8\eta}{\alpha^2}
\end{align*}
and thus $\|\Delta A\|_F\leq 2\sqrt{2\eta}/\alpha$. This leaves us with bounding the biggest eigenvalue $\lambda_{\max}$ of $A$. 
Using the well-known inequality for symmetric matrices $\lambda_{\max}\geq \sum_{(i,j)\in (\Pi\setminus F)^2} A_{ij}/|\Pi\setminus F|$ we obtain
\begin{align*}
& \lambda_{\max}
\geq \frac1{|\Pi \setminus F|} {\sum_{(i,j)\in (\Pi\setminus F)^2} a_{ij}}
\\
&~~\geq \frac{1}{|\Pi \setminus F|}
	\left(
	\sum_{(i,i)\in (\Pi \setminus F)^2} 1
	+ \sum_{(i,j)\in [k]^2} \frac{cq(d_{ij}-6\eta)}{\sum_{1\le \ell \le k}d_{i\ell}}
	+\sum_{(i,j)\in [k]^2}\frac{ cq(d_{ij}-6\eta)}{\sum_{1\le \ell \le k}d_{\ell j}}
	-2\sum_{i\in [k]\setminus (\Pi\setminus F)} \sum_{j\in [k]}\frac{cq}{\sum_{1\le \ell \le k}d_{\ell j}}
	\right).
\end{align*}
Note that $|\Pi\setminus F| \ge (1-\eta)k$, $|[k] \setminus (\Pi \setminus F)| \le \eta k$. Moreover, $\sum_{1\le \ell \le k}d_{\ell j}\geq \alpha k/2$ for all $j \in [k]$. Thus
\begin{align*}
\lambda_{\max}
&\ge 1 + \frac{1}{k}
	\left(
	 cqk
	+ cqk
	- 12cq\sum_{(i,j)\in [k]^2}\frac{\eta}{\sum_{1\le \ell \le k}d_{\ell j}}
	- 2cq \frac{\eta k^2}{\alpha k/2}\right)
	\geq 1+2cq(1-8\eta/\alpha).
\end{align*}
Choosing $\eta$ small enough such that  $2q(1-c(1-8\eta/\alpha)),2\sqrt{2\eta}/\alpha \le \nu$ implies the claim of this lemma.

This leaves us with proving that \eqref{assumption} also holds with high probability. As $|I_{t+1}^{(pp)}|$ conditioned on $I_t$  is a self-bounding function so is $|I_{t+1}^{(pp),i}\setminus I_t^i|$ for all $ i\in \Pi\setminus F$ and therefore also $|I_{t+1}^{(pp),i}\backslash (I_t\cup \mathcal{H}_{i,j'})|=: Y_{t+1,i,j'}$ for all $i\in \Pi\setminus F $ and $ j'\in \Pi_i$. Note that $Y_{t+1,i,j'}=X_{t+1,i,j'} - X_{t,i}$;  Lemma \ref{lugosi} yields that
$$P_t\left[Y_{t+1,i,j'}\geq \left(1-\mathbb{E}_t[Y_{t+1,i,j'}]^{-1/3}\right)\mathbb{E}_t[Y_{t+1,i,j'}]\right]\geq 1- \mathbb{E}_t[Y_{t+1,i,j'}]^{-1/3}$$
and therefore setting 
$$Z_{t,i}=c \cdot q\sum_{j\in\Pi_i}
\left(\frac{d_{ij}-6\eta}{\sum_{1\le \ell \le k}d_{i\ell}}+\frac{d_{ij}-6\eta}{\sum_{1\le \ell \le k}d_{\ell j}}\right)X_{t,j}\quad \text{for all }i\in \Pi\setminus F$$
and using \eqref{assumption}, i.e. $\mathbb{E}_t[Y_{t+1,i,j'}]\geq Z_{t,i}$ for all $i\in\Pi\setminus F$ and $j'\in\Pi_i$ we get with probability at least $1-k^3Z_{t,i}^{-1/3}$
$$Y_{t+1,i,j'}\geq (1-Z_{t,i}^{-1/3})Z_{t,i}\quad\text{for all }i\in\Pi\setminus F \text{ and }j'\in\Pi_i.$$
This and $|I_t^i|\geq X_{t,i}$ for all $i\in\Pi\setminus F$ implies that~\eqref{assumption} also holds with high probability for a marginally smaller $c$, as claimed.
\end{proof}

\paragraph{Extension.}
We now solve the linear recurrence relation above and extend it to more than one round to get an upper bound on the runtime of \pushpull. We first state a Chernoff Bound that will be very useful in the next lemma.
\begin{lemma}~\cite{Janson2014}
\label{chernoff}  Let $\varepsilon, \delta>0$.
Suppose that $X_1,...,X_n$ are independent geometric random variables with parameter $\delta$, so $\mathbb{E}\left[X_i\right]= 1/\delta$ for each i. Let $X:=\sum_{1\leq i \leq n}X_i, \mu = \mathbb{E}\left[X\right]=n/\delta$. Then 
$$P\left[X\geq (1+\varepsilon)\mu\right] \le e^{-n(\varepsilon - \log(1+\varepsilon))}\leq e^{-\varepsilon^2n/2(1+\varepsilon)}$$
\end{lemma}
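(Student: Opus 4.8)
The plan is to prove the bound by the standard exponential-moment (Chernoff) method, exploiting the elementary but crucial observation that a geometric variable of mean $1/\delta$ is dominated, in the sense of moment generating functions, by an exponential variable of the \emph{same} mean. This reduces the estimate to the clean, $\delta$-free Chernoff bound for a sum of exponentials (equivalently, a Gamma/Erlang variable), which is exactly where the exponent $\varepsilon - \log(1+\varepsilon)$ comes from. Since the statement is attributed to~\cite{Janson2014} one may of course simply invoke it, but the following is the self-contained argument I would write.

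Concretely, first I would write, for any $s>0$, the Markov/Chernoff inequality $P[X \ge (1+\varepsilon)\mu] \le e^{-s(1+\varepsilon)\mu}\prod_{i=1}^n \mathbb{E}[e^{sX_i}]$, using independence. The moment generating function of a single $X_i$ is $\mathbb{E}[e^{sX_i}] = \frac{\delta e^s}{1-(1-\delta)e^s}$, finite precisely for $e^s < 1/(1-\delta)$. The heart of the argument is the claim that for $0 \le s < \delta$ one has $\frac{\delta e^s}{1-(1-\delta)e^s} \le \frac{1}{1 - s/\delta}$, the right-hand side being the MGF of an exponential of mean $1/\delta$. After clearing the (positive) denominators this reduces to $e^s(1-s) \le 1$, which holds for all $s \ge 0$ since $e^s(1-s)$ equals $1$ at $s=0$ and has derivative $-s e^s \le 0$. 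Combining these yields $P[X \ge (1+\varepsilon)\mu] \le e^{-s(1+\varepsilon)n/\delta}(1 - s/\delta)^{-n}$ for every $0 < s < \delta$.

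It then remains to optimise. Substituting $r = s/\delta \in (0,1)$ turns the bound into $\big(e^{-r(1+\varepsilon)}(1-r)^{-1}\big)^n$, and minimising the exponent $-r(1+\varepsilon) - \log(1-r)$ over $r$ gives the minimiser $r = \varepsilon/(1+\varepsilon)$ (note that then $s = r\delta < \delta$, so the estimate above is applicable); plugging in yields precisely $e^{-n(\varepsilon - \log(1+\varepsilon))}$, the first inequality. The second inequality is the purely elementary estimate $\varepsilon - \log(1+\varepsilon) \ge \frac{\varepsilon^2}{2(1+\varepsilon)}$, which follows by noting that the difference vanishes at $\varepsilon = 0$ and has non-negative derivative $\frac{\varepsilon^2}{2(1+\varepsilon)^2}$ for $\varepsilon \ge 0$.

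I do not expect a genuine obstacle here: once the MGF-domination inequality $\mathbb{E}[e^{sX_i}] \le (1 - s/\delta)^{-1}$ is in place, everything collapses to the familiar exponential-tail computation and two one-line calculus checks. The only point requiring care is the bookkeeping of the admissible range of $s$: the geometric MGF is finite only for $s < -\log(1-\delta)$ and the comparison MGF only for $s < \delta$, but since $\delta < -\log(1-\delta)$ and the optimal value $s = \delta\varepsilon/(1+\varepsilon)$ lies below $\delta$, both are simultaneously valid and no boundary issue arises.
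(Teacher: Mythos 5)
Your proof is correct: the MGF domination $\mathbb{E}[e^{sX_i}] = \frac{\delta e^s}{1-(1-\delta)e^s} \le (1-s/\delta)^{-1}$ does reduce to $e^s(1-s)\le 1$, the optimizer $r=\varepsilon/(1+\varepsilon)$ yields exactly $e^{-n(\varepsilon-\log(1+\varepsilon))}$, and the calculus check for $\varepsilon-\log(1+\varepsilon)\ge \varepsilon^2/(2(1+\varepsilon))$ is right. Note that the paper itself gives no proof at all — it simply cites this as a known result from Janson's paper on tail bounds for sums of geometric variables — and your self-contained argument is essentially the same standard exponential-moment computation used there, so there is nothing to reconcile.
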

 Together with Lemma \ref{startup} the following lemma  implies  Theorem \ref{pushPullIsRobust} and Theorem \ref{firstPhasePushPullRobust}.
\begin{lemma}\label{pushpullResUpper}
Consider the setting of Theorems \ref{pushPullIsRobust}, \ref{firstPhasePushPullRobust} and let $I_t=I_t^{(pp)}$. The following statements hold whp.
\begin{enumerate}[label={(\alph*)},ref={\thetheorem~(\alph*)}]
\item \label{RegularStart}
\itemsep0em 
Let $I\subseteq V_n$ satisfying  $|I|=\Theta(n)$, then there is $t=\Theta(\log\log n)$, such that whp $|I_{t}|\geq |I_{t}\cap I|\geq \log\log n$.
\item Let $\log\log n\leq|I_t| \leq n/\log n$. Then there is $\tau\leq\log_{1+2q} (n/|I_t|)+o(\log n)$ such that $|I_{t+\tau}| >n/\log n.$
\item Let $n/\log n\leq|I_t| \leq n -n/\log n$. Then there is $\tau = o(\log n)$ such that $|I_{t+\tau}| >n-n/\log n.$
\item Let $|I_t|  \geq n - n/\log n$ and $q= 1$. Then there is $\tau = o(\log n)$ such that $|I_{t+\tau}| =n.$
\end{enumerate}
\end{lemma}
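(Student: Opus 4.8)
Statements (c) and (d) need essentially no new ideas. Under the natural round-by-round coupling, the set informed by \pushpull contains both the set informed by \push and the set informed by \pull, so $|I_t^{(pp)}|\ge\max\{|I_t^{(push)}|,|I_t^{(pull)}|\}$. Hence (c) follows from the \push statement Lemma \ref{propositionFirstPhasePushRobust} b), which already shows that on exactly these graphs $o(\log n)$ rounds suffice to pass from $n/\log n$ to $n-n/\log n$ informed vertices, and (d) follows from the case $q=1$ of Lemma \ref{pullResUpper} c), which shows that \pull mops up the last $n/\log n$ vertices in $o(\log n)$ rounds. The real content of the lemma therefore lies in (a) and (b).

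For (a) I would exploit that $\tilde G_n$ is dense and weakly expanding: $\delta(\tilde G_n)=\Omega(n)$, and Lemmas \ref{expmix} and \ref{linearFraction} give $e_{\tilde G_n}(W,V\setminus W)=\Omega(|W|\,|V\setminus W|)$ for every $W$, so Lemma \ref{CountingPaths} applies and guarantees an abundance of short paths between any two vertices. The plan is to show that $|I_t\cap I|$ grows by a constant factor per (bounded block of) round(s) while it is small: each already informed vertex informs, within a constant number of rounds, at least one vertex of the fixed linear set $I$ with probability $\Omega(1/n)$, so $\mathbb{E}_t[|(I_{t+O(1)}\cap I)\setminus I_t|]=\Omega(|I_t|)\ge\Omega(|I_t\cap I|)$. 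Since $|I_{t+1}\cap I|$ is a self-bounding function of the independent choices (the remark after Lemma \ref{appllugsi}), Lemma \ref{lugosiappl} bounds its variance by its mean and the concentration scheme of Example \ref{concetrationRemark} turns the expected growth into actual geometric growth whp. Geometric growth at a constant rate reaches $\log\log n$ in $O(\log\log n)$ rounds, which is the claimed time scale.

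The core is (b). I would first fix the partition $\Pi$ of Lemma \ref{Partition} and apply (a) to each of the $O(k^2)$ linear-sized sets $V_i\setminus(N_i\cup\mathcal E_{i,j})$; after $O(\log\log n)$ rounds this makes $X_{t,i}\ge\log\log n$ for every $i\in\Pi\setminus F$, which is exactly the hypothesis of the recursion Lemma \ref{RegularRecursion}, giving $X_{t+1}\ge(A+\Delta A)X_t$ whp with $A$ symmetric, $\lambda_{\max}(A)\ge1+2q-\nu$ and $\|\Delta A\|_F\le\nu$. To linearise this entrywise inequality I would project onto the leading (Perron) eigenvector $w\ge0$ of $A$. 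Writing $\phi_t=\langle w,X_t\rangle$ and using $w\ge0$, the entrywise bound yields $\phi_{t+1}\ge\langle w,(A+\Delta A)X_t\rangle=\lambda_{\max}\phi_t+\langle(\Delta A)^\top w,X_t\rangle$; bounding the error by $|\langle(\Delta A)^\top w,X_t\rangle|\le\|\Delta A\|_{\mathrm{op}}\|X_t\|_2\le\nu\|X_t\|_1\le\nu\phi_t/w_{\min}$, where $w_{\min}=\min_iw_i$, gives $\phi_{t+1}\ge(1+2q-\nu-\nu/w_{\min})\phi_t$. Chaining these one-round estimates with Proposition \ref{concentration} and using $|I_t|\ge\|X_t\|_1\ge\phi_t$ shows that $|I_t|$ passes $n/\log n$ after at most $\log_{1+2q}(n/|I_t|)+o(\log n)$ rounds.

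Two points make (b) the main obstacle. First, the projection argument produces genuine growth only if the leading eigenvector $w$ of $A$ is nonnegative with $w_{\min}$ bounded below by a positive constant independent of $n$; I would obtain this from Perron--Frobenius applied to the nonnegative irreducible matrix obtained from $A$ by deleting its $O(\eta)$-small negative entries (which come only from pairs of density below $6\eta$), irreducibility holding because the reduced graph on the parts $V_i$ is dense and connected since $\delta(\tilde G_n)=\Omega(n)$. Second, Lemma \ref{RegularRecursion} needs $X_{t,i}\ge\log\log n$ throughout the phase, not merely at its start, and since the diagonal entry of $A$ is only $1-O(\eta)$ this is not free coordinatewise; I would instead use the connectivity of the reduced graph to show that every coordinate tracks the growing potential $\phi_t$ from below once the phase has begun. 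Finally, to replace the growth base $1+2q-\nu-\nu/w_{\min}$ by the sharp $1+2q$, I would let the regularity parameters $\eta,\nu$ tend to $0$ slowly with $n$, so that the base is $1+2q-o(1)$ and $\log_{1+2q-o(1)}(n/|I_t|)=\log_{1+2q}(n/|I_t|)+o(\log n)$, which is the stated bound.
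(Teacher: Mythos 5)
Your handling of (c) and (d) by domination is exactly what the paper does (it quotes the \pull bounds of Lemma \ref{pullResUpper}; your use of Lemma \ref{propositionFirstPhasePushRobust} for (c) is equally valid), but your arguments for (a) and (b) each have a genuine gap. In (a), the bookkeeping is inconsistent: if each informed vertex hits the set $I$ within $O(1)$ rounds with probability $\Omega(1/n)$, the expected gain is $\Omega(|I_t|/n)$, not $\Omega(|I_t|)$. What Lemma \ref{CountingPaths} actually yields is probability $\Omega(1/n)$ per fixed \emph{target} $w\in I$ (the path events are disjoint), hence probability $\Omega(1)$ of hitting \emph{some} vertex of $I$, after summing over the $\Theta(n)$ targets. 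More fundamentally, even with corrected constants, the scheme ``self-bounding variance plus Example \ref{concetrationRemark}'' cannot produce a whp statement while $|I_t\cap I|$ is of constant order: Lemma \ref{lugosiappl} gives $\mathrm{Var}\le\mathbb{E}$, so relative fluctuations are of order one when the mean is $O(1)$, and no early round succeeds whp, so ``geometric growth whp per round'' from size $O(1)$ is impossible. The paper avoids this by an \emph{additive} argument: it couples the process from below with a protocol that in each block of $d=O(1)$ rounds independently informs one uniformly random vertex with constant probability, so that $|I_{sd}^\star\cap I|$ is binomial with mean $\Theta(s)$, and a single Chernoff bound over $s=\Theta(\log\log n)$ independent blocks gives the claim. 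Some such aggregation over many rounds is unavoidable; your per-round argument must be replaced by it.

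In (b) your structure (partition, part (a) applied to each cell $V_i\setminus(N_i\cup\mathcal{E}_{i,j})$, then Lemma \ref{RegularRecursion}) matches the paper, but your linearisation rests on the unproved claim that the Perron vector $w$ of $A$ satisfies $w_{\min}\geq c>0$ with $c$ independent of $n$ \emph{and of the regularity parameters}. Perron--Frobenius (after truncation and irreducibility) gives $w>0$ but no quantitative bound; the naive path-following estimate gives $w_{\min}\geq (c'\eta)^{O(1)}$ or $k^{-O(1)}$, and since Lemma \ref{RegularRecursion} chooses $\eta$ as a function of $\nu$ (in its proof $\eta=O(\nu^2)$), your error term $\nu/w_{\min}$ can blow up as $\nu\to 0$: the quantifiers are circular. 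A correct version would need a separate quantitative argument that the reduced density graph inherits conductance $\Omega(\alpha\varepsilon)$ from Lemma \ref{linearFraction}, forcing bounded eigenvector ratios; nothing of the sort is in your sketch. The paper sidesteps all of this: it iterates the nonnegative matrix $B=A+\Delta A$ itself, gets $\lambda_{\max}(B)\geq\lambda_{\max}(A)-\|\Delta A\|_F\geq 1+2q-2\nu$ from Hoffman--Wielandt, and lower-bounds $\|B^{\mathcal{T}}X_{t_0}\|_1\geq\lambda_{\max}(B)^{\mathcal{T}}$ using an \emph{arbitrary} eigenvector $v$ of $B$ normalised so that its largest-modulus entry is $1$, together with $X_{t_0}\geq\mathbf{1}\geq v$ entrywise and $B\geq 0$; no positivity or entry lower bound for the eigenvector is ever needed. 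Finally, your second worry --- maintaining $X_{t,i}\geq\log\log n$ during the phase --- is a non-issue: each $X_{t,i}$ is non-decreasing in $t$ because $I_t$ is, so the hypothesis of Lemma \ref{RegularRecursion} persists for free, and the reduced-graph connectivity argument you propose for it is unnecessary.
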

\begin{proof}
As $|I_t^{(pp)}|\geq |I_t^{(\pull)}| $ clearly $c)$ and $d)$ follow from Lemma \ref{pullResUpper}.
We  show $a)$ by determining a lower bound for the probability that an arbitrary vertex gets informed after a constant number of rounds. Set $\beta=\min \{\alpha, \epsilon\}$, let $I_0 = \{u\}$ and choose $w\in V, w\neq u$. By Lemma \ref{CountingPaths} there is $d\leq 8/\beta^2+2$ and $c=(\beta^4/64)^{8/\beta^2 +3} \in (0,1)$ such that there are at least $c n^{d-1}$ paths of (edge) length $d$ from $u$ to $w$. Let~$\gamma=(u,v_1, \dots, v_{d-1},w)$ be such a path from $u$ to $w$, and denote by~$A_\gamma$ the event that $w$ is informed via~$\gamma$ after exactly $d$ rounds performing only \push operations, i.e.,~$A_\gamma$ is the event that in the first round the randomly selected neighbour of $u$ is $v_1$, in the second round the randomly selected neighbour of $v_1$ is $v_2$ and so forth, until in the $d$th round the randomly selected neighbour of $v_{d-1}$ is $w$. Obviously, the probability of~$A_\gamma$ is  bounded from below by $n^{-d}$. Let further $\gamma'\neq \gamma$ be another path from $u$ to $w$ with length $d$. As $\gamma$ and $\gamma'$ differ by at least one edge  we readily obtain that $P[A_\gamma\cap A_{\gamma'}]=0$. Let $\Gamma$ denote the set of all paths with length $d$ from $u$ to $w.$
Having done these preparations we use them to conclude for all $w\in V$ and $t\geq 0$
\begin{equation}\label{lowerBoundProbPush}
P_t[w \in I_{t+d}]\ge P_t\left[\bigcup_{\gamma\in \Gamma}A_\gamma\right] \ge \sum_{\gamma\in\Gamma}P_t[A_\gamma]\geq \sum_{\gamma\in\Gamma}n^{-d}\geq \frac{c}{n}.
\end{equation}
We define a modified protocol as follows. Wait $d:=\lceil 8/\beta^2+2\rceil$ rounds, after that with probability $c$ choose one uninformed vertex uniformly at random and set it as informed. Repeat. Call the vertices informed by this algorithm $I_t^\star$. Then the probability for any vertex to be informed after $d$ rounds is 
$$P_t[v\in I_{t+d}^\star|v\notin I_t^\star]= {c}/{n}.$$  
Thus for any $t\ge 0$ 
$$P_t[v\in I_{t+d}|v\in U_t]\geq P_t[v\in I_{t+d}^\star|v\notin I_t^\star]= {c}/{n}.$$
Note that for any $s \in \mathbb{N}$ the set $I_{sd}^\star$ is generated by a very simple procedure: $s$ times independently, with probability $c$, we choose a random vertex and put it into $I_{sd}^\star$. Thus $|I_{sd}^\star \cap I|$ is binomially distributed with $s$ trials, where each one has success probability $c|I|/n = \Theta(c)$; it follows readily that $|I_{sd}^\star \cap I|$ concentrates around a multiple of $s$ for large $s$, and the claim follows by choosing $s = \Theta(\log\log n )$.
%
%

 This leaves $b)$ to be shown.
 Part $a)$ implies that there is some $t_0=o(\log n)$ such that $X_{t_0,i}=\Theta(\log \log n) $ for all $ i\in \Pi\setminus F$ by choosing $I=V_i\setminus (N_i\cup \mathcal{E}_{i,j}),j\in \Pi_i$ and applying a union bound over $i$ and $j$. 
 Thus we can apply Lemma \ref{RegularRecursion}. It gives whp, say with probability $1-g(n)=1-o(1)$, that $X_{t+1}\geq(A+\Delta A)X_t$, $A$ has maximal eigenvalue $\lambda_{\max}(A)\geq 1+2q-\nu$ and $\|\Delta A\|_F\leq \nu$. Then $B:=A+\Delta A$ has maximal eigenvalue $\lambda_{\max}(B)\geq \lambda_{\max}(A)-\|\Delta A\|_F\geq 1+2q-2\nu$ (Theorem of Wielandt-Hoffmann, compare e.g. \cite{Hoffman1953}) . 
 
Set $f(n):=(\log(n/\log n))^{2/3}$.  Our assumptions guarantee that $f(n) = \omega(1)$ and $f(n) = o(\log n)$. Moreover, set
$$\tau := 1/(1-g(n))\log(n/\log n)/\log (\lambda_{\max}(B))  + f(n)= \log(n)/\log (\lambda_{\max}(B))+o(\log n).$$ 
Let $(X_i)_{i \in \N}$ be i.i.d.~geometric random variables with expectation $1-g(n)$. Set $X=X_1+X_2 + \dots + X_{\mathcal{T}}$ with $\mathcal{T} = \log(n/\log n)/\log (\lambda_{\max}(B))$. We show that $P[X\leq  \tau]= 1-o(1)$. To see this, note first that by linearity of expectation $\mathbb{E}[X]=\tau - f(n)$. Then with Lemma \ref{chernoff}
\[
	P[X \le \tau]
	= P\left[X \le \Big(1 + \frac{f(n)}{\tau + f(n)}\Big)\mathbb{E}[X]\right]
	\ge 1 - \exp\left(-\Theta\Bigl(\frac{f(n)^2}{\tau}\Bigr)\right)
	= 1- o(1).
\]
Thus we have whp
$$|I_{t+\tau}|\geq\|X_{t+\tau}\|_1\geq \|B^\mathcal{T} X_{t_0}\|_1.$$
Let $v$ be an eigenvector of $B$ to $\lambda_{\max}(B)$. As $v\neq 0$ there is an index $\ell$ such that $v_\ell\neq 0$. Without loss of generality we can assume that $v_\ell=1$, as $v/v_\ell $ is also an eigenvector to $\lambda_{\max}(B)$.
Thus $(B^\mathcal{T} v)_\ell=\lambda_{\max}(B)^\mathcal{T}$, $(B^\mathcal{T} (X_{t_0}-v))_i\geq 0$ for all $1\leq i\leq k$ and therefore
\begin{align*}
|I_{t+\tau}|
\ge (B^\mathcal{T}X_{t_0} )_\ell\ge (B^\mathcal{T}(v+X_{t_0}-v))_\ell=(B^\mathcal{T} v)_\ell+(B^\mathcal{T}(X_{t_0}-v))_\ell\geq (B^\mathcal{T} v)_\ell \geq \lambda_{\max}(B)^\mathcal{T}.
\end{align*}
Our choice of $\mathcal{T}$ yields whp $|I_{t+\tau}|\geq \lambda_{\max}(B)^\mathcal{T}\ge {n}/{\log n}$. Note that, since $\nu>0$ was chosen arbitrarily, we actually have that $\tau\leq \log_{1+2q}(n)+o(\log n)$, and the proof is completed.
\end{proof}

\subsection{Proof of Theorem \ref{pushPullIsNotRobustq} --- edge deletions may slow down \pushpull}
\label{edge_deletions_slow_down_push_pull_for_q_smaller_than_1}

 For any $0 < \varepsilon < 1/2$, $q\in(0,1)$ we consider a sequence of graphs $(G_n(\varepsilon))_{n\in\mathbb{N}}=((V_n,E_n))_{n\in \mathbb{N}}$ that is similar to the one studied in the proof of Theorem \ref{lastPhasePushNotRobust}. Let $V_n=A_n\cup B_n$ with $ A_n:=\{1,\dots,\lfloor n/2 \rfloor\}, B_n:=\{\lfloor n/2 \rfloor+1, \dots, n\}$ and  deg$(v)=n-1$ for all $v \in A_n$. Let the induced subgraph of $B_n$ be a random graph in which each edge is included independently with probability $p= 1-2\varepsilon$. We know and it is easy to show, see for example \cite[Section IV]{fountoulakis2010reliable}, that whp this subgraph is almost regular, i.e.,
\begin{align}\label{label3}
d_{B_n}(v)=(1+o(1))(1-2\varepsilon)n/2 \quad\text{for all } v\in B_n,
\end{align}
and is an expander, which means that for every $S_n\subseteq B_n$, $1 \leq |S_n| \leq n/4$ and $d_{B_n}:=(1-2\varepsilon)n/2$ we have 
\begin{align}\label{label2}
e(S_n,B_n\backslash S_n) =(1+o(1)) \frac{d_{B_n} |S_n| |B_n \setminus S_n|}{|B_n|}
= (1-2\varepsilon + o(1))|S_n||B_n\setminus S_n|.
\end{align}
At first we give a statement that describes the expected number of informed vertices after performing one round of \pushpull.
\begin{lemma}Let $G_n(\varepsilon)=(A_n\cup B_n,E_n)$ be as above.
\begin{enumerate}[label={(\alph*)},ref={\thetheorem~(\alph*)}]
\itemsep0em 
\item \label{push_pull_exa_exp} Let $\sqrt{\log n}\leq |I_t| \leq n/\log n$ and set
$$
X_t
=\Big(\big|I_t^{(pp),(A)}\big|,\big|I_t^{(pp),(B)}\big|\Big)
:=\Big(\big|I_t^{(pp)}\cap A_n\big|,\big|I_t^{(pp)}\cap B_n\big|\Big).$$
Then $\mathbb{E}_t[X_{t+1}]=(1+o(1))MX_t$, where 
\begin{align*}
M=\begin{pmatrix}
1+q&q\bigl(1+\varepsilon/(2-2\varepsilon)\bigr)\\q\bigl(1+\varepsilon/(2-2\varepsilon)\bigr) & 1+q\bigl(1-2\varepsilon/(2-2\varepsilon)\bigr)
\end{pmatrix}.
\end{align*}
\item \label{push_pull_exa_exp_b}Let $|U_t^{(pp)}|\leq n/\log n$. Then
$\mathbb{E}_{t}[|U_{t+1}^{(pp)}|] \leq (1+o(1)) e^{-q(1/2+(1/2-\varepsilon)/(1-\varepsilon))}\left(1-q\right)|U_t|. $
\end{enumerate}
\end{lemma}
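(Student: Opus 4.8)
The plan is to condition throughout on the (whp) event that the random graph induced on $B_n$ satisfies both the degree regularity \eqref{label3} and the expander estimate \eqref{label2}; on this event every statement below is a deterministic claim about the conditional expectation $\mathbb{E}_t[\cdot]$. Recall the relevant degrees: every $v\in A_n$ has degree $n-1$ and is adjacent to all other vertices, while every $v\in B_n$ has degree $(1+o(1))(1-\varepsilon)n$, of which $\lfloor n/2\rfloor$ lead to $A_n$ and $(1+o(1))(1-2\varepsilon)n/2$ stay inside $B_n$. Write $a=|I_t\cap A_n|$ and $b=|I_t\cap B_n|$, so that $X_t=(a,b)^T$ and $a+b=|I_t|$.

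For part (a) I would compute the four quantities ``\pull into $A_n$'', ``\push into $A_n$'', ``\pull into $B_n$'' and ``\push into $B_n$'' separately, exactly in the style of the proof of Lemma~\ref{expPushPull}. Since $|I_t|\le n/\log n=o(n)$, every uninformed vertex has an informed fraction of its neighbourhood that is $o(1)$, so each single-vertex \pull-probability and each single-vertex \push-in-probability is $o(1)$; this makes the linearisation $1-\prod_w(1-q/|N(w)|)=(1+o(1))q\sum_w 1/|N(w)|$ valid uniformly. The \pull contributions are sums of $q|N(u)\cap I_t|/|N(u)|$ and the \push contributions are sums of $q\sum_{w\in N(u)\cap I_t}1/|N(w)|$; in both cases the edges to $A_n$ contribute an exactly known term (all $a$ informed $A_n$-vertices are adjacent to every other vertex), while the edges inside $B_n$ are evaluated through \eqref{label2} as $e(I_t\cap B_n,U_t\cap B_n)=(1+o(1))(1-2\varepsilon)b\,n/2$. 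Carrying out these four sums and dividing by the appropriate degrees from \eqref{label3} yields, after elementary algebra, precisely the entries of $M$; for instance the $A_n$-row gives $(1+q)a+q\tfrac{2-\varepsilon}{2-2\varepsilon}b$, and $\tfrac{2-\varepsilon}{2-2\varepsilon}=1+\tfrac{\varepsilon}{2-2\varepsilon}$. Finally, since \push and \pull use independent randomness and every single-vertex \pull-probability is $o(1)$, the expected number of vertices informed by both in the same round is at most $\bigl(\max_u P_t[u\in I^{(pull)}_{t+1}]\bigr)\cdot\mathbb{E}_t[|I^{(push)}_{t+1}\setminus I_t|]=o(|I_t|)$, so subtracting the overlap does not change the leading order. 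As $(MX_t)_A,(MX_t)_B=\Theta(a+b)$, the additive $o(|I_t|)$ errors are absorbed into the claimed factor $(1+o(1))$.

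For part (b) I am in the regime $|U_t|=o(n)$, so $|I_t|=(1-o(1))n$. For a fixed uninformed $u$ let $A_u$ (resp.\ $B_u$) be the event that $u$ is not informed by \push (resp.\ \pull); these are independent, hence $P_t[u\in U_{t+1}]=P_t[A_u]P_t[B_u]$. Because every vertex has $\Omega(n)$ neighbours while only $o(n)$ are uninformed, $|N(u)\cap I_t|/|N(u)|=1-o(1)$ uniformly, giving $P_t[B_u]=1-q|N(u)\cap I_t|/|N(u)|=1-q+o(1)$ for every $u$. For $P_t[A_u]=\prod_{w\in N(u)\cap I_t}(1-q/|N(w)|)$ I split the informed neighbours into their $A_n$- and $B_n$-parts and use \eqref{label3}: for $u\in B_n$ the $\approx n/2$ informed $A_n$-neighbours (degree $n-1$) and the $\approx(1-2\varepsilon)n/2$ informed $B_n$-neighbours (degree $(1+o(1))(1-\varepsilon)n$) give $P_t[A_u]=(1+o(1))e^{-q/2}e^{-q(1-2\varepsilon)/(2(1-\varepsilon))}$, whereas for $u\in A_n$ all $\approx n/2$ of its $B_n$-neighbours are informed, producing the strictly larger exponent $e^{-q/2}e^{-q/(2(1-\varepsilon))}$, i.e.\ a strictly smaller $P_t[A_u]$. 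Hence the $B_n$-value is a uniform upper bound on $P_t[A_u]$, and summing $P_t[A_u]P_t[B_u]$ over all $u\in U_t$ yields $\mathbb{E}_t[|U_{t+1}|]\le(1+o(1))e^{-q(1/2+(1/2-\varepsilon)/(1-\varepsilon))}(1-q)|U_t|$, where I used $\tfrac12+\tfrac{1-2\varepsilon}{2(1-\varepsilon)}=\tfrac12+\tfrac{1/2-\varepsilon}{1-\varepsilon}$.

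The routine parts are the four sums in (a) and the two exponents in (b); the points needing care are (i) checking that the \push/\pull overlap in (a) is genuinely of lower order, which here is easy precisely because $|I_t|=o(n)$ forces every \pull-probability to vanish, and (ii) identifying in (b) that the $B_n$-vertices are the bottleneck — they keep the larger probability of staying uninformed because a $2\varepsilon$-fraction of their potential $B_n$-neighbours is absent — so that the \emph{maximum} over the two classes, rather than an average, is the correct quantity for an upper bound. The only genuine subtlety is making every $(1+o(1))$ uniform over the vertices; this follows since \eqref{label3} controls the degree of \emph{every} vertex of $B_n$ (not merely most) whp, so no exceptional set of vertices is needed in either part.
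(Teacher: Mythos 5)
Your proposal is correct and takes essentially the same route as the paper's proof: both decompose the one-round gain into the four \push/\pull contributions into $A_n$ and $B_n$, linearise the push probabilities using $|I_t|=o(n)$ together with \eqref{label3} and \eqref{label2}, dismiss the push--pull overlap via independence and the $o(1)$ pull probabilities, and in (b) bound $P_t[A_u]P_t[B_u]$ for every $u\in U_t$ by the $B_n$-value $\bigl(e^{-q(1/2+(1/2-\varepsilon)/(1-\varepsilon))}+o(1)\bigr)(1-q+o(1))$, noting that vertices of $A_n$ have a strictly smaller probability of staying uninformed. I see no gaps; your observation that $B_n$ is the bottleneck class is exactly the inequality the paper uses.
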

\begin{proof}
For $J \in \{A,B\},J_n \in \{A_n,B_n\}$ set $U_t^{(J)}:=U_t\cap J_n, I_t^{(J)}:=I_t\cap J_n$ and $I_{t+1}^{(pp),(J)}=I_{t+1}^{(pp)}\cap J_n$. 
We first prove $a)$ by computing the expected number of informed vertices after a single round. Since $d(u) = \Omega(n)$ for all $u\in V_n$ and $|I_t|\leq n/\log n$, the probability of $u\in U_t$ being informed by \pull is
$$P_t\left[u \in I_{t+1}^{(\pull)}\setminus I_t\right]=\frac{q|N(u)\cap I_t|}{|N(u)|}=o(1).$$ 
As the events of $u$ being informed by \push and \pull are independent we have $P_t[u \in (I_{t+1}^{(\push)}\cap I_{t+1}^{(\pull)})\setminus I_t]=o(1)P_t[u\in I_{t+1}^{(\push)}\setminus I_t]$. Thus 
\begin{align*}
\mathbb{E}_t\left[\big|I_{t+1}^{(pp)}\setminus I_t\big|\right]=(1+o(1))\left(\mathbb{E}_t\left[\big|I_{t+1}^{(push)}\setminus I_t\big|\right]+\mathbb{E}_t\left[\big|I_{t+1}^{(pull)}\setminus I_t\big|\right]\right).
\end{align*}
We look at \pull in detail first. Recall that deg$(v)=n-1$ for all $v \in A_n$ and deg$(v)=(1+o(1))(1-\varepsilon)n$ for all $v \in B_n$. Moreover, using  \eqref{label2} we obtain
\begin{align*}
\mathbb{E}_t\left[\big|I_{t+1}^{(pull)}\setminus I_t\big|\right]&=\sum_{u\in U_t}q\frac{|N(u)\cap I_t|}{|N(u)|}=\sum_{u\in U_t^{(A)}}q\frac{|N(u)\cap I_t|}{|N(u)|}+\sum_{u\in U_t^{(B)}}q\frac{|N(u)\cap I_t|}{|N(u)|}\\&=(q+o(1))\frac{n}{2}\left(\frac{|I_t^{(A)}|+|I_t^{(B)}|}{n}+\frac{|I_{t}^{(A)}|+(1-2\varepsilon)|I_t^{(B)}|}{(1-\varepsilon)n}\right)
\end{align*}
and thus
\begin{equation*}
\begin{aligned}
\mathbb{E}_t\left[|I_{t+1}^{(pull),(A)}\setminus I_t|\right] & =(q+o(1))\frac{|I_t^{(A)}|+|I_t^{(B)}|}{2}, \\
\mathbb{E}_t\left[|I_{t+1}^{(pull),(B)}\setminus I_t|\right] & =(q+o(1))\frac{|I_{t}^{(A)}|+(1-2\varepsilon)|I_t^{(B)}|}{2(1-\varepsilon)}.
\end{aligned}
\end{equation*}
Next we consider \push. We obtain by using that $(1-1/n)^n= e^{-1+o(1)}$
\begin{align*}
\mathbb{E}_t\left[\big|I_{t+1}^{(push)}\setminus I_t\big|\right]&=\sum_{u\in U_t}1-\prod_{i\in N(u)\cap I_t}\left(1-\frac{q}{|N(i)|}\right)
\\&=
\sum_{u\in U_t}1-\left(1-\frac{q}{n}\right)^{|I_t^{(A)}|} 
\cdot  \left(1-\frac{(1+o(1))q}{(1-\varepsilon)n}\right)^{\mathbb1{\left[u \in U_t^{(A)}\right]}|I_t^{(B)}| + \mathbb1{\left[u \in U_t^{(B)}\right]}|N(u)\cap I_t^{(B)}|}
\\&=
\sum_{u\in U_t}1-\exp\left(-(q+o(1))\Biggl(\frac{|I_t^{(A)}|}{n}+\frac{\mathbb1{\left[u \in U_t^{(A)}\right]}|I_t^{(B)}| + \mathbb1{\left[u \in U_t^{(B)}\right]}|N(u)\cap I_t^{(B)}|}{(1-\varepsilon)n}\Biggr)\right).
\end{align*}
Using that $1-1/n=(1+o(1))e^{-1/n}$ we get
\begin{align*}
\mathbb{E}_t\left[\big|I_{t+1}^{(push)}\setminus I_t\big|\right]&=  (q+o(1))\sum_{u\in U_t}\Biggl(\frac{|I_t^{(A)}|}{n}+\frac{\mathbb1{\left[u \in U_t^{(A)}\right]}|I_t^{(B)}| + \mathbb1{\left[u \in U_t^{(B)}\right]}|N(u)\cap I_t^{(B)}|}{(1-\varepsilon)n}\Biggr)
\end{align*}
and thus with $|U_t^{(A)}|,|U_t^{(B)}|=(1-o(1))n/2$ and  \eqref{label2},
\begin{equation*}
\begin{aligned}
\mathbb{E}_t\left[|I_{t+1}^{(push),(A)}\setminus I_t|\right]&=(q+o(1))\biggl(\frac{|I_t^{(A)}|}{2}+\frac{|I_t^{(B)}|}{2}+\frac{\varepsilon|I_t^{(B)}|}{2(1-\varepsilon)}\biggr)\\
\mathbb{E}_t\left[|I_{t+1}^{(push),(B)}\setminus I_t|\right]&=(q+o(1))\biggl(\frac{|I_t^{(A)}|}{2}+\frac{|I_t^{(B)}|}{2}-\frac{\varepsilon|I_t^{(B)}|}{2(1-\varepsilon)}\biggr).
\end{aligned}
\end{equation*}
Accumulating the calculated expectations for \pull and \push yields the claim.

Next we show $b)$. The assumption implies that $|I_t|=(1-o(1))n$ and therefore $|I_t^{(A)}|=|I_t^{(B)}|=(1-o(1))n/2$. Let $A_u$ be the event that an uninformed vertex $u$ does not get informed by the \push algorithm, let $B_u$ be the corresponding event for \pull. Then $A_u$ and $B_u$ are independent and $A_u\cap B_u$ is the event that $u$ does not get informed in the current round. Let $u\in U_t^{(A)}$, then 
\begin{align*}
P_t[A_u]&=\prod_{v\in I_t^{(A)}}\left(1-\frac{q}{|N(v)|}\right)\prod_{v\in I_t^{(B)}}\left(1-\frac{q}{|N(v)|}\right)=(1-o(1))\left(1-\frac{q}{n}\right)^{|I_t^{(A)}|}\left(1-\frac{q}{(1-\varepsilon)n}\right)^{|I_t^{(B)}|}
\\&=e^{-q(1/2+ 1/(2(1-\varepsilon)))}+o(1)\leq e^{-q(1/2+ (1-2\varepsilon)/(2(1-\varepsilon)))}+o(1)
\end{align*}
and
$$P_t[B_u]= 1-\frac{q|N(u)\cap |I_t||}{|N(u)|}= 1-\frac{q|I_t|}{n-1}=1-q+o(1).$$
Consider now $u\in U_t^{(B)}$, then according to \eqref{label3} we have $|N(u)\cap I_t^{(B)}|=|N(u)\cap B_n|-|N(u)\cap U_t^{(B)}|=(1+o(1))(1-2\varepsilon)n/2$; therefore 
\begin{align*}
P_t[A_u]&=\prod_{v\in I_t^{(A)}}\left(1-\frac{q}{|N(v)|}\right)\prod_{v\in N(u)\cap I_t^{(B)}}\left(1-\frac{q}{|N(v)|}\right)=(1-o(1))e^{-q/2}\left(1-\frac{q}{(1-\varepsilon)n}\right)^{|N(u)\cap I_t^{(B)}|}\\&=e^{-q(1/2+(1-2\varepsilon)/(2(1-\varepsilon)))}+o(1)
\end{align*}
and 
\begin{align*}
P_t[B_u]&= 1-\frac{q|N(u)\cap |I_t||}{|N(u)|}= 1-(1+o(1))\frac{q(|I_t^{(A)}|+|N(u)\cap I_t^{(B)}|)}{(1-\varepsilon)n}
=1-q+o(1).
\end{align*}
Combining the results for $u\in U_t^{(A)}$ and $u\in U_t^{(B)}$ we get
\begin{equation*}
\begin{aligned}
\mathbb{E}_{t}[|U_{t+1}|]=\sum\limits_{u\in U_t} P_{t}[A_u]P_t[B_u]\le (1+o(1)) e^{-q(1/2+(1/2-\varepsilon)/(1-\varepsilon))}\left(1-q\right)|U_t|.
\end{aligned}
\end{equation*}
\end{proof}
\begin{remark}\label{lambdaMax}
Let $\lambda_{\max}$ be the greatest eigenvalue of $M$ as defined in Lemma \ref{push_pull_exa_exp}. Then
$$\lambda_{max}= 1+2q+(2q(\sqrt{(\varepsilon^2/2-\varepsilon+1)}-1)+q\varepsilon )/(2-2\varepsilon)>1+2q.$$
\end{remark}
Next comes a lemma that bounds the runtime of \pushpull on $G_n(\varepsilon)$. In particular, Lemma \ref{pushpull_slow} $a)$ and $c)$ provide a lower bound on the runtime and Lemma \ref{pushpull_slow} $a), b)$ and $d)$ together with Lemma \ref{RegularStart} provide an upper bound.
\begin{lemma}\label{pushpull_slow}
Let $I_t=I_t^{(pp)}$, $\varepsilon>0$ and $\lambda=\lambda_{\max}(M)$ be the greatest eigenvalue of $M$ as given in Lemma \ref{push_pull_exa_exp}. Consider $G_n(\varepsilon)$.
\begin{enumerate}
\itemsep0em 
\item Let $\sqrt{\log n}\leq|I_t| \leq n/\log n$. Then there are $\tau_1,\tau_2= \log_{\lambda} (n/|I_t|)+o(\log n)$ such that $|I_{t+\tau_1}| < n/\log n<|I_{t+\tau_1}|.$
\item Let $n/\log n\leq|I_t| \leq n -n/\log n$. Then there is $\tau = o(\log n)$ such that $|I_{t+\tau}| >n-n/\log n.$
\item Let $|I_t|\le n/\log n$. Then there is $\tau \geq \log n/\log((1-q)^{-1}\exp(q(1/2+(1/2-\varepsilon)/(1-\varepsilon)))) -o(\log n)$ such that $|I_{t+\tau}| <n.$ 
\item Let $|I_t| \geq n-n/\log n$ and $q \in (0,1)$. Then there is $\tau \leq \log n/\log((1-q)^{-1}\exp(q(1/2+(1/2-\varepsilon)/(1-\varepsilon)))) +o(\log n)$ such that $|I_{t+\tau}|=n$.
\end{enumerate}
\end{lemma}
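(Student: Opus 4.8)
The plan is to split the analysis into the four phases corresponding to the four claims and to reuse the machinery already developed. Write $X_t=\big(|I_t^{(pp),(A)}|,|I_t^{(pp),(B)}|\big)$ as in Lemma~\ref{push_pull_exa_exp}, so that $|I_t|=\|X_t\|_1$. Claim $a)$ is the growth phase governed by the matrix $M$; claim $b)$ is the intermediate phase and will mirror Lemma~\ref{pullResUpper_b}; and claims $c)$, $d)$ are the completion phase, mirroring Lemma~\ref{PushPullLower} and Lemma~\ref{PushPullUpper} respectively, now fed with the one-round estimate of Lemma~\ref{push_pull_exa_exp_b}. For $a)$ I would start from the recursion $\mathbb{E}_t[X_{t+1}]=(1+o(1))MX_t$ of Lemma~\ref{push_pull_exa_exp}. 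Since $M$ is symmetric with strictly positive entries, Perron--Frobenius yields a strictly positive eigenvector $v$ for its top eigenvalue $\lambda=\lambda_{\max}(M)$, which is computed in Remark~\ref{lambdaMax} and satisfies $\lambda>1+2q$. Tracking the scalar $Y_t:=\langle v,X_t\rangle$ gives $\mathbb{E}_t[Y_{t+1}]=(1+o(1))\lambda Y_t$ because $v^{T}M=\lambda v^{T}$, while $v>0$ and $X_t\ge0$ force $Y_t=\Theta(|I_t|)$. Concentration is immediate: by the remark following Lemma~\ref{appllugsi} both components $|I_{t+1}^{(pp),(A)}|$ and $|I_{t+1}^{(pp),(B)}|$ are self-bounding, so each has variance at most its mean, and hence (after rescaling $v$) $\mathrm{Var}_t[Y_{t+1}]\le\mathbb{E}_t[Y_{t+1}]$, so that~\eqref{cheby} applies verbatim. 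Running the two-sided induction of Example~\ref{concetrationExample} for $Y_t$ with $\lambda$ in the role of $c$, and using $Y_t=\Theta(|I_t|)$ to translate the target $Y_t\approx n/\log n$ into $|I_t|\approx n/\log n$ (a shift of only $O(1)$ rounds), yields $\tau_1,\tau_2=\log_\lambda(n/|I_t|)+o(\log n)$ with $|I_{t+\tau_2}|<n/\log n<|I_{t+\tau_1}|$.

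For $b)$ I would exploit that $G_n(\varepsilon)$ is dense, with minimum degree $(1-\varepsilon+o(1))n=\Theta(n)$, and that it expands: every vertex of $A_n$ is adjacent to all of $V_n$ and $B_n$ satisfies~\eqref{label2}. This is exactly the situation of Lemma~\ref{pullResUpper_b}. For $|I_t|\in[n/\log n,n/2]$ the expansion gives $\mathbb{E}_t[|I_{t+1}\setminus I_t|]\ge c|I_t|$ for a constant $c>0$ (the \pull contribution alone suffices, using $|I_t^{(pp)}|\ge|I_t^{(pull)}|$), so Example~\ref{concetrationRemark} supplies this transition in $o(\log n)$ rounds. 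For $|I_t|\in[n/2,n-n/\log n]$ I switch to the uninformed set and show $\mathbb{E}_t[|U_{t+1}|]\le c'|U_t|$ with $c'<1$; iterating gives $\mathbb{E}_t[|U_{t+\tau}|]\le (c')^{\tau}|U_t|$, which falls below $n/\log n$ after $\tau=O(\log\log n)=o(\log n)$ rounds, and Markov's inequality closes the phase.

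Claims $c)$ and $d)$ are driven by the one-round survival probability $p:=(1-q)\exp\!\big(-q(1/2+(1/2-\varepsilon)/(1-\varepsilon))\big)$, whose reciprocal is the base of the logarithm in both statements. Since $(1-2\varepsilon)/(2(1-\varepsilon))=(1/2-\varepsilon)/(1-\varepsilon)$, the value $p$ is precisely the largest one-round survival probability among all vertices, attained by the vertices of $B_n$ in the proof of Lemma~\ref{push_pull_exa_exp_b}; moreover, because adding informed vertices can only decrease a fixed vertex's survival probability, the bound $P_s[A_u\cap B_u]\ge(1-o(1))p$ holds at every round $s$ of the process, not only near the end. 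For the lower bound $c)$ I would follow Lemma~\ref{PushPullLower}: pass to the faster modified process in which every uninformed vertex of $B_n$ is informed independently with probability exactly $1-p$; with $\tau=\log n/\log(1/p)$ and any $c<1$, each such vertex survives $c\tau$ rounds with probability $p^{c\tau}=n^{-c}=\omega(1/n)$, and since the $\Theta(n)$ vertices of $U_t\cap B_n$ are independent in this model, whp at least one survives, giving $|I_{t+c\tau}|<n$. For the upper bound $d)$ I would apply Lemma~\ref{push_pull_exa_exp_b} directly: $\mathbb{E}_t[|U_{t+\tau}|]\le\big((1+o(1))p\big)^{\tau}|U_t|$, and choosing $\tau=\log n/\log(1/p)+o(\log n)$ makes this at most $1/\log n$, whence $P_t[|U_{t+\tau}|\ge1]=o(1)$ by Markov's inequality and $|I_{t+\tau}|=n$ whp.

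The main obstacle is claim $a)$: converting the expectation-level two-dimensional recursion into a sharp, two-sided statement about the random trajectory of $|I_t|$, and ensuring the growth rate is the Perron eigenvalue $\lambda$ regardless of how $X_t$ is initially split between $A_n$ and $B_n$. Projecting onto the strictly positive eigenvector $v$ is what resolves this, since it makes $Y_t=\langle v,X_t\rangle$ simultaneously grow at the clean rate $\lambda$ and stay comparable to $|I_t|$ from the first round on. A secondary point, needed for $c)$, is the identification of $B_n$ as the set of slowest-to-inform vertices, together with the monotonicity argument showing that their survival lower bound $p$ persists throughout the whole run.
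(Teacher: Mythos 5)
Your proposal is correct in substance and follows the paper's phase decomposition, but in parts a) and c) it takes routes that differ from the paper's in instructive ways. For a), the paper does not project onto the Perron eigenvector: it first invokes Lemma \ref{RegularStart} to guarantee that at some round $t_0$ \emph{both} components of $x_{t_0}$ are $\Theta(\log\log n)$, sandwiches $x_{t_0}$ between constant multiples of the positive eigenvector, $c_1 v\log\log n\le x_{t_0}\le c_2 v\log\log n$, and then iterates the two-sided matrix inequality $\left((1-o(1))M\right)^{t-t_0}x_{t_0}\le x_t\le \left((1+o(1))M\right)^{t-t_0}x_{t_0}$ componentwise. Your scalar observable $Y_t=\langle v,X_t\rangle$ reaches the same conclusion more cleanly: by symmetry of $M$ one has $\mathbb{E}_t[Y_{t+1}]=(1+o(1))\lambda Y_t$ exactly, and $Y_t=\Theta(|I_t|)$ holds for \emph{every} split of $I_t$ between $A_n$ and $B_n$, so you never need the balanced starting configuration the paper has to manufacture. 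The one step you must repair is the variance bound: $\mathrm{Var}_t[Y_{t+1}]$ is not controlled by the two componentwise variances alone, since $|I_{t+1}^{(pp),(A)}|$ and $|I_{t+1}^{(pp),(B)}|$ are correlated (they share the push randomness). Either check that the weighted count $v_1|I_{t+1}^{(pp),(A)}|+v_2|I_{t+1}^{(pp),(B)}|$, with $v$ rescaled so that $v_1,v_2\in[0,1]$, is itself self-bounding (the proof of Lemma \ref{appllugsi} goes through verbatim with weights), or use $\mathrm{Var}[aX+bY]\le 2a^2\mathrm{Var}[X]+2b^2\mathrm{Var}[Y]$ and absorb the factor $2$ into the rescaling; the paper sidesteps this by applying Chebyshev to each component separately and taking a union bound, see \eqref{union}.

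For c), you use the modification in which each uninformed vertex of $B_n$ is informed independently with probability exactly $1-p$; this is the idiom of Lemma \ref{PushPullLower}, whereas the paper's proof of this part uses a different modification: \emph{all} vertices (informed or not) push, and every uninformed vertex's pull is assumed to reach an informed vertex, failing only through the transmission coin. The distinction matters for rigor. Your monotonicity remark (``adding informed vertices only decreases survival'') gives the per-round \emph{marginal} bound $P_s[A_u\cap B_u]\ge(1-o(1))p$ in every state, but marginal bounds alone cannot yield $P\bigl[\bigwedge_u \overline{E_u}\bigr]=o(1)$; one needs that conditioning on other vertices being informed does not spoil $u$'s survival probability. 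In the real process this is delicate precisely because a vertex informed early becomes an additional threat to $u$ in all later rounds. The paper's modification removes this feedback: the threat to $u$ becomes state-independent (only $u$'s own pull coins and its neighbours' push choices matter), and the survival events are then negatively correlated across vertices, which gives $P\bigl[\bigwedge_u\overline{E_u}\bigr]\le\prod_u P\bigl[\overline{E_u}\bigr]$. To make your version airtight, either adopt this state-independent modification or justify the claimed stochastic domination by the independent process through such a construction. Parts b) and d) of your proposal coincide with the paper's treatment: b) is delegated to the earlier lemmas, and d) is Markov's inequality applied to Lemma \ref{push_pull_exa_exp_b}.
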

\begin{proof}
We do not give a proof for $b)$ as it follows immediately from Lemma \ref{RegularStart}. For $J \in \{A,B\}$ set $U_t^{(A)}:=U_t\cap J_n, I_t^{(J)}:=I_t\cap J_n$. We prove $a)$ first. Let $t_0>0$ be the first round such that $|I_{t_0}|\geq \log\log n$ and set $x_t$ and $M$ as in Lemma \ref{push_pull_exa_exp}, note that Lemma \ref{RegularStart} also gives that $x_{t_0}\ge \log\log n/2$. Then for all $t\geq t_0$ such that $|I_t|\leq n/\log n$ we obtain from Lemma \ref{push_pull_exa_exp} that $\mathbb{E}_t[x_{t+1}]=(1+o(1))Mx_t$ and, in particular, $\mathbb{E}_t[(x_{t+1})_i]=\Theta(|I_t|)$ for $i\in\{1,2\}$. As every component of $x_t$ is self-bounding, Lemma \ref{lugosiappl} applies and we get for $i\in\{1,2\}$
$$P_t[|(x_{t+1})_i-\mathbb{E}_t[(x_{t+1})_i]|\geq \mathbb{E}_t[(x_{t+1})_i]^{2/3}]= O(|I_t|^{-1/3})$$ 
and by union bound, provided that $|I_t| \le n/\log n$,
\begin{align}\label{union}
P_t\left[\bigcap_{i\in \{1,2\}}\left(|(x_{t+1})_i-\mathbb{E}_t[(x_{t+1})_i]|\leq \mathbb{E}_t[(x_{t+1})_i]^{2/3}\right)\right]= 1-O(|I_t|^{-1/3}).
\end{align}
Using \eqref{union} we want to find a bound on $|I_{t+1}|$. We get as long as $|I_t| \le n/\log n$  that
\begin{align*}
\left((1-O(|I_{t_0}|^{-1/3}))M\right)^{t+1-t_0}x_{t_0}\leq x_{t+1}\leq \left((1+O(|I_{t_0}|^{-1/3}))M\right)^{t+1-t_0}x_{t_0}.
\end{align*}
As seen in Remark \ref{lambdaMax}, $M$ has maximal eigenvalue $\lambda_{\max}>1$ and as $M$ is a positive matrix there is a positive eigenvector $v$ to $\lambda_{\max}$, compare \cite{Wielandt1950}. This gives constants $c_1,c_2>0$ such that $ c_1v\log{\log n}\le x_{t_0}\le c_2 v\log{\log n} $ and for $t$ large enough
\begin{align*}
\frac{c_1}{c_2}\left((1-O(|I_{t_0}|^{-1/3}))\lambda_{\max}\right)^{t+1-t_0}x_{t_0}\leq x_{t+1}\leq \frac{c_2}{c_1}\left((1+O(|I_{t_0}|^{-1/3}))\lambda_{\max}\right)^{t+1-t_0}x_{t_0},
\end{align*}
and therefore
\begin{align*}
|I_{t+1}|\leq \frac{c_1}{c_2}((1+o(1))\lambda_{\max})^{t-t_0}|I_{t_0}|.
\end{align*}
as long as the right hand side is bounded by $n/\log n$. For all these $t$ we get additionally 
\begin{align*}
|I_{t+1}|\geq \frac{c_2}{c_1}((1-o(1))\lambda_{\max})^{t-t_0}|I_{t_0}|.
\end{align*}
Proceeding as in Examples \ref{concetrationRemark} and \ref{concetrationExample}, where we replace the events   ``$|I_{t}| \ge \mathbb{E}_{t-1}\left[|I_{t}|\right]-  \mathbb{E}_{t-1}\left[|I_{t}|\right]^{2/3}$ or $|I_t| \ge n/g(n)\text{''}$ and ``$\left||I_{t}|-\mathbb{E}_{t-1}\left[|I_{t}|\right]\right|\leq \mathbb{E}_{t-1}\left[|I_{t}|\right]^{2/3}$'' with ``$\bigcap_{i\in \{1,2\}}\left((x_{t+1})_i\ge (1-\mathbb{E}_t[(x_{t+1})_i]^{-1/3})\mathbb{E}_t[(x_{t+1})_i]\right)$ or $|I_{t}|\geq n/\log n$'' and ``$\bigcap_{i\in \{1,2\}}\left(|(x_{t+1})_i-\mathbb{E}_t[(x_{t+1})_i]|\leq \mathbb{E}_t[(x_{t+1})_i]^{2/3}\right)$'' we obtain the statement.

Next we show $c)$. The assumption guarantees that less than $n/\log n$ vertices are informed. 
Thus $|U_t^{(B)}|\geq n/2-|I_t| \ge \left(1/2-{1}/{\log n}\right)n.$ We consider a modified dissemination process, where in each round, each uninformed vertex always chooses an informed neighbour (but does not necessarily get informed as the message transmission may fail),  and additionally each vertex chooses a neighbour iuar and after this round the chosen vertex is informed with probability $q$; in other words, we assume that also uninformed vertices can inform other vertices. In this modified process the probability of an uninformed vertex $u\in U_t^{(B)}$ staying uninformed after performing one round is given by the product of the probabilities of not being informed by \pull or via \push by a vertex in $A_n$ or $B_n$. Using \eqref{label2} and $(1-1/n)^n= e^{-1+o(1)}$ we get $g(n)=o(1)$ such that
\begin{align*}
P_t[u\in U_{t+1}^{(B)}]&=(1-q)\left(1- \frac{q}{n}\right)^{n/2}\left(1-\frac{q}{(1-\varepsilon)n}\right)^{|N(u)\cap B_n|} \\&= (1-q)\exp\left(-q\left(\frac{1}{2}+\frac{1/2-\varepsilon}{1-\varepsilon}\right)+g(n)\right).
\end{align*}
As we have seen in the proof of Lemma \ref{push_pull_exa_exp_b}, the probability to be in formed by \pushpull is greater for a vertex in $A_n$ than for a vertex in $B_n$. Therefore it is sensible to expect that some vertices in $B_n$ we will be the last to be informed.
Consequently denote by $E_{u}$ the event that a currently uninformed vertex $u\in U_t^{(B)}$ does not get informed in this modified version within the next $$\tau :=\frac{1}{\log({(1-q)^{-1}\exp(q(1/2+(1/2-\varepsilon)/(1-\varepsilon)-g(n)))})} \log(n) - h(n)$$ rounds where $h = o(\log n)$ and $h = \omega(1)$.
Therefore we have
\begin{align*}
P_t[E_{u}]&=
\left((1-q)\exp\left(-q\left(\frac{1}{2}+\frac{1/2-\varepsilon}{1-\varepsilon}\right)+g(n)\right)\right)^{\tau} = \frac{1}{n} e^{\omega(1)}.
\end{align*}
In this modified model the events $\{E_{u}\mid u\in U_t^{(B)}\}$ satisfy that there is $p =\omega(n^{-1})$ such that $P_t[E_{u}\mid \{\overline{E_v}:v\in U\}]\geq p$ for all $u\in B_n$ and $U\subseteq V\setminus \{u\}$. This follows immediately by the above calculations. Thus as $|U_t^{(B)}|=\Theta(n)$
\begin{align*}
P_t\left[\bigwedge\limits_{u \in U_t^{(B)}} \overline{E_{u}}\right] 
\le \prod\limits_{u \in U_t^{(B)}} (1-p)
 \leq \exp\left(- \sum\limits_{u \in U_t^{(B)}} p\right)=o(1).
\end{align*}
Finally we show $d)$.  
By Lemma \ref{push_pull_exa_exp_b}, we obtain that for any $\tau \in \mathbb{N}$, $$\mathbb{E}_{t}[|U_{t+\tau}|] \le \left( (1+o(1)) e^{-q(1/2+(1/2-\varepsilon)/(1-\varepsilon))}\left(1-q\right)\right)^{\tau}|U_t|. $$ 
Then for some\begin{align*}
\tau := \frac{\log(n)}{\log((1-q)^{-1}\exp(q(1/2+(1/2-\varepsilon)/(1-\varepsilon))))} +o(\log n)
\end{align*}
we obtain  that, say,
$\mathbb{E}_{t}[|U_{t+\tau}|]\leq |U_t|/n \leq {1}/{\log n}.$
Thus $P_{t}[|U_{t+\tau}|\geq 1]\leq o(1)$ by Markov's inequality.
\end{proof}
Lemma \ref{pushpull_slow} together with Lemma \ref{startup} give that
$$T_{pp}(G_n(\varepsilon),q)=\log_{\lambda}n+\frac{1}{q(1-1.5 \varepsilon)/(1-\varepsilon)-\log \left(1-q\right)}\log n+o(\log n)$$
where $\lambda= 1+2q+(2q(\sqrt{(\varepsilon^2/2-\varepsilon+1)}-1)+q\varepsilon )/(2-2\varepsilon)>1+2q$. To see wether \pushpull actually slowed down (in terms of order $ \log n$) one has to compare the runtime on this sequence of graphs 
 to $c_{pp}\log n$; the runtime on expander sequences. In the figure below we can see that it slows down for nearly all values of $\varepsilon$ and $q$ in question; however, there are admissible values of $\varepsilon$ and $q$ such that the process even speeds up.
 
 \begin{figure}[htbp]
 \centering
\begin{tikzpicture}

\begin{axis}[view={135}{35},axis lines* = left,tick label style={font=\small,align = center,rotate = 70},
label style={font=\small},
    xlabel=$q$, ylabel=$\varepsilon$,zlabel=$100\cdot\Delta$	
]
\addplot3[
	surf,
	domain=0.9:1,
	domain y=0:0.5,
] 
	{(1/ln((1/4)*(6*x*y+4*y-4*x-4-2*sqrt(2*y^2*x^2-4*y*x^2+4*x^2))/(-1+y))+1/(x*(1-(3/2)*y)/(1-y)-ln(1-x))-1/ln(1+2*x)-1/(x-ln(1-x)))*100};
	\addplot3 [black] table {
1 0 0
1 0.5 0
0.9 0.5 0

};

\end{axis}
\end{tikzpicture}
\caption{Plotted values of $\Delta$ in $T_{pp}(G_n(\varepsilon),q)-c_{pp}\log n=\Delta \log n+o(\log n)$, for $0.9<q<1$ and $0<\varepsilon<1/2$. }
\end{figure}
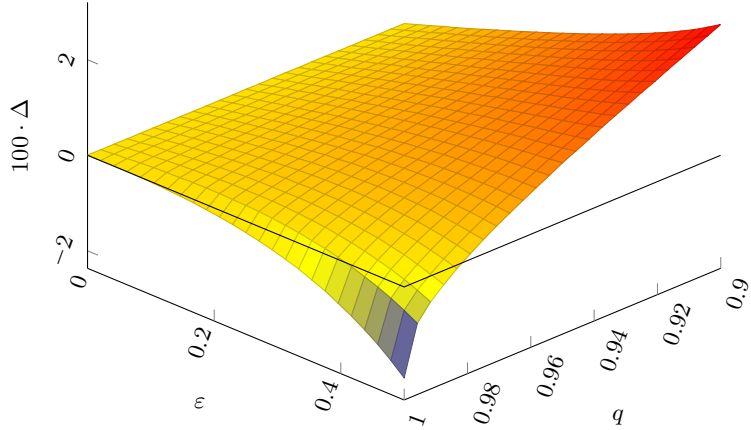

\phantomsection

\small
\bibliography{literature}

\begin{thebibliography}{10}

\bibitem{acan2017push}
H.~Acan, A.~Collevecchio, A.~Mehrabian, and N.~Wormald.
\newblock {On the push\&pull protocol for rumour spreading}.
\newblock In {\em {Extended Abstracts Summer 2015}}, pages 3--10. Springer,
  2017.

\bibitem{angel2017string}
O.~Angel, A.~Mehrabian, and Y.~Peres.
\newblock {The string of diamonds is tight for rumor spreading}.
\newblock In {\em {Approximation, Randomization, and Combinatorial
  Optimization. Algorithms and Techniques (APPROX/RANDOM 2017)}}. Schloss
  Dagstuhl-Leibniz-Zentrum fuer Informatik, 2017.

\bibitem{Boucheron2004}
S.~Boucheron, G.~Lugosi, and O.~Bousquet.
\newblock {Concentration inequalities}.
\newblock In {\em {Advanced Lectures on Machine Learning}}, pages 208--240.
  Springer, 2004.

\bibitem{boyd2006randomized}
S.~Boyd, A.~Ghosh, B.~Prabhakar, and D.~Shah.
\newblock {Randomized gossip algorithms}.
\newblock {\em IEEE/ACM Transactions on Networking (TON)}, 14(SI):2508--2530,
  2006.

\bibitem{censor2012global}
K.~Censor-Hillel, B.~Haeupler, J.~Kelner, and P.~Maymounkov.
\newblock Global computation in a poorly connected world: fast rumor spreading
  with no dependence on conductance.
\newblock In {\em Proceedings of the forty-fourth annual ACM symposium on
  Theory of computing}, pages 961--970. ACM, 2012.

\bibitem{chierichetti2018rumor}
F.~Chierichetti, G.~Giakkoupis, S.~Lattanzi, and A.~Panconesi.
\newblock Rumor spreading and conductance.
\newblock {\em Journal of the ACM (JACM)}, 65(4):17, 2018.

\bibitem{daum_rumor_2016}
S.~Daum, F.~Kuhn, and Y.~Maus.
\newblock {Rumor Spreading with Bounded In-Degree}.
\newblock In {\em {Structural Information and Communication Complexity - 23rd
  International Colloquium, {SIROCCO} 2016, Helsinki, Finland, July 19-21,
  2016, Revised Selected Papers}}, pages 323--339, 2016.

\bibitem{Dellamonica2008}
D.~Dellamonica, Y.~Kohayakawa, M.~Marciniszyn, and A.~Steger.
\newblock {On the Resilience of Long Cycles in Random Graphs}.
\newblock {\em Electr. J. Comb.}, 15(1), 2008.

\bibitem{Demers1988}
A.~Demers, D.~Greene, C.~Houser, W.~Irish, J.~Larson, S.~Shenker, H.~Sturgis,
  D.~Swinehart, and D.~Terry.
\newblock Epidemic algorithms for replicated database maintenance.
\newblock {\em ACM SIGOPS Operating Systems Review}, 22(1):8--32, 1988.

\bibitem{doerr2011social}
B.~Doerr, M.~Fouz, and T.~Friedrich.
\newblock {Social networks spread rumors in sublogarithmic time}.
\newblock In {\em {Proceedings of the forty-third annual ACM symposium on
  Theory of computing}}, pages 21--30. ACM, 2011.

\bibitem{doerr2017randomized}
B.~Doerr and A.~Kostrygin.
\newblock {Randomized Rumor Spreading Revisited}.
\newblock In {\em {44th International Colloquium on Automata, Languages, and
  Programming, {ICALP} 2017, July 10-14, 2017, Warsaw, Poland}}, pages
  138:1--138:14, 2017.

\bibitem{doerr2014tight}
B.~Doerr and M.~K{\"u}nnemann.
\newblock {Tight analysis of randomized rumor spreading in complete graphs}.
\newblock In {\em {Proceedings of the Meeting on Analytic Algorithmics and
  Combinatorics}}, pages 82--91. Society for Industrial and Applied
  Mathematics, 2014.

\bibitem{elsasser2009runtime}
R.~Els{\"a}sser and T.~Sauerwald.
\newblock {On the runtime and robustness of randomized broadcasting}.
\newblock {\em Theoretical Computer Science}, 410(36):3414--3427, 2009.

\bibitem{fountoulakis2010reliable}
N.~Fountoulakis, A.~Huber, and K.~Panagiotou.
\newblock {Reliable broadcasting in random networks and the effect of density}.
\newblock In {\em {2010 Proceedings IEEE INFOCOM}}, pages 1--9. IEEE, 2010.

\bibitem{fountoulakis2010rumor}
N.~Fountoulakis and K.~Panagiotou.
\newblock {Rumor spreading on random regular graphs and expanders}.
\newblock In {\em {Approximation, Randomization, and Combinatorial
  Optimization. Algorithms and Techniques}}, pages 560--573. Springer, 2010.

\bibitem{fountoulakis2012ultra}
N.~Fountoulakis, K.~Panagiotou, and T.~Sauerwald.
\newblock {Ultra-fast rumor spreading in social networks}.
\newblock In {\em {Proceedings of the twenty-third annual ACM-SIAM symposium on
  Discrete Algorithms}}, pages 1642--1660. SIAM, 2012.

\bibitem{Friedrich2013}
T.~Friedrich, T.~Sauerwald, and A.~Stauffer.
\newblock {Diameter and Broadcast Time of Random Geometric Graphs in Arbitrary
  Dimensions}.
\newblock {\em Algorithmica}, 67(1):65--88, Sep 2013.

\bibitem{frieze1985shortest}
A.~M. Frieze and G.~R. Grimmett.
\newblock {The shortest-path problem for graphs with random arc-lengths}.
\newblock {\em Discrete Applied Mathematics}, 10(1):57--77, 1985.

\bibitem{giakkoupis_tight_2011}
G.~Giakkoupis.
\newblock {Tight bounds for rumor spreading in graphs of a given conductance}.
\newblock In {\em {28th International Symposium on Theoretical Aspects of
  Computer Science, {STACS} 2011, March 10-12, 2011, Dortmund, Germany}}, pages
  57--68, 2011.

\bibitem{giakkoupis_tight_2014}
G.~Giakkoupis.
\newblock {Tight Bounds for Rumor Spreading with Vertex Expansion}.
\newblock In {\em {Proceedings of the Twenty-Fifth Annual {ACM-SIAM} Symposium
  on Discrete Algorithms, {SODA} 2014, Portland, Oregon, USA, January 5-7,
  2014}}, pages 801--815, 2014.

\bibitem{greenberg2014tight}
S.~Greenberg and M.~Mohri.
\newblock {Tight lower bound on the probability of a binomial exceeding its
  expectation}.
\newblock {\em Statistics \& Probability Letters}, 86:91--98, 2014.

\bibitem{haeupler2015simple}
B.~Haeupler.
\newblock Simple, fast and deterministic gossip and rumor spreading.
\newblock {\em Journal of the ACM (JACM)}, 62(6):47, 2015.

\bibitem{Hoffman1953}
A.~J. Hoffman and H.~W. Wielandt.
\newblock {The variation of the spectrum of a normal matrix}.
\newblock {\em Duke Math. J.}, 20:37--39, 1953.

\bibitem{hoory2006expander}
S.~Hoory, N.~Linial, and A.~Wigderson.
\newblock {Expander graphs and their applications}.
\newblock {\em Bulletin of the American Mathematical Society}, 43(4):439--561,
  2006.

\bibitem{Janson2014}
S.~{Janson}.
\newblock {Tail bounds for sums of geometric and exponential variables}.
\newblock {\em ArXiv e-prints:1709.08157}, Sept. 2017.

\bibitem{Karp2000}
R.~M. Karp, C.~Schindelhauer, S.~Shenker, and B.~V{\"o}cking.
\newblock {Randomized Rumor Spreading}.
\newblock In {\em {41st Annual Symposium on Foundations of Computer Science,
  {FOCS} 2000, 12-14 November 2000, Redondo Beach, California, {USA}}}, pages
  565--574, 2000.

\bibitem{Panagiotou2015}
K.~Panagiotou, X.~P{\'e}rez{-}Gim{\'e}nez, T.~Sauerwald, and H.~Sun.
\newblock {Randomized Rumour Spreading: The Effect of the Network Topology}.
\newblock {\em Combinatorics, Probability {\&} Computing}, 24(2):457--479,
  2015.

\bibitem{panagiotou2017asynchronous}
K.~Panagiotou and L.~Speidel.
\newblock {Asynchronous rumor spreading on random graphs}.
\newblock {\em Algorithmica}, 78(3):968--989, 2017.

\bibitem{rodl2010regularity}
V.~R{\"o}dl and M.~Schacht.
\newblock {Regularity lemmas for graphs}.
\newblock In {\em {Fete of combinatorics and computer science}}, pages
  287--325. Springer, 2010.

\bibitem{Sudakov2008}
B.~Sudakov and V.~H. Vu.
\newblock {Local resilience of graphs}.
\newblock {\em Random Struct. Algorithms}, 33(4):409--433, 2008.

\bibitem{Wielandt1950}
H.~Wielandt.
\newblock {Unzerlegbare, nicht negative Matrizen}.
\newblock {\em Mathematische Zeitschrift}, 52(1):642--648, Dec 1950.

\end{thebibliography}
\end{document}